\tikzset{>=latex}
\newcommand{\beqn}{\begin{align}}
\newcommand{\eeqn}{\end{align}}
\newcommand{\beqnn}{\begin{equation*}}
\newcommand{\eeqnn}{\end{equation*}}
\newcommand{\bsubeq}{\begin{subequations}}
\newcommand{\esubeq}{\end{subequations}}
\newcommand{\bma}{\left(\begin{array}}
\newcommand{\ema}{\end{array}\right)}
\newcommand{\biz}{\begin{itemize}}
\newcommand{\eiz}{\end{itemize}}
\newcommand{\benu}{\begin{enumerate}}
\newcommand{\eenu}{\end{enumerate}}
\newcommand{\bce}{\begin{center}}
\newcommand{\ece}{\end{center}}
\newcommand{\bem}{\begin{em}}
\newcommand{\eem}{\end{em}}
\newcommand{\bpm}{\begin{pmatrix}}
\newcommand{\epm}{\end{pmatrix}}
\newtheorem{propo}{Proposition}
\newtheorem{lemm}{Lemma}
\newtheorem{theorem}{Theorem}
\newtheorem*{proof}{Proof}
\begin{document}

\title{Deep Learning of Delay-Compensated Backstepping for   Reaction-Diffusion PDEs} 
\author{Shanshan Wang,  Mamadou Diagne \IEEEmembership{Member, IEEE} and Miroslav Krsti\'c \IEEEmembership{Fellow, IEEE} 
\thanks{S. Wang is with the Department of Control Science and Engineering, University of Shanghai for Science and Technology, Shanghai, P. R. China, 200093. Email: ShanshanWang@usst.edu.cn}
\thanks{M. Diagne (corresponding author) and M. Krsti\'c are with the Department of Mechanical and Aerospace Engineering, University of California San Diego, 
La Jolla, CA 92093. Email: mdiagne@ucsd.edu and krstic@ucsd.edu}
}
\maketitle
\begin{abstract}
Deep neural networks that approximate nonlinear function-to-function mappings, i.e., operators,  which are called DeepONet, have been demonstrated in recent articles to be capable of encoding entire PDE control methodologies, such as backstepping, so that, for each new functional coefficient of a PDE plant, the backstepping gains are obtained through a simple function evaluation. These initial results have been limited to single PDEs from a given class, approximating the solutions of only single-PDE operators for the gain kernels. In this paper we expand this framework to the approximation of multiple (cascaded) nonlinear operators. Multiple operators arise in the control of PDE systems from distinct PDE classes, such as the system in this paper: a reaction-diffusion plant, which is a parabolic PDE, with input delay, which is a hyperbolic PDE. The DeepONet-approximated nonlinear operator is a cascade/composition of the operators defined by one hyperbolic PDE of the Goursat form and one parabolic PDE on a rectangle, both of which are bilinear in their input functions and not explicitly solvable. For the delay-compensated PDE backstepping  controller, which employs the learned control operator, namely, the approximated gain kernel, we guarantee exponential stability in the $L^2$ norm of the plant state and the $H^1$ norm of the input delay state. Simulations illustrate the contributed theory. 
\end{abstract}

\begin{IEEEkeywords}
PDE backstepping, deep learning, neural networks, distributed parameter systems, delay systems
\end{IEEEkeywords}

\section{Introduction}

By focusing on stabilization of a PDE class with input delay, we introduce the first DeePONet implementation of a PDE controller where the nonlinear operator being approximated is a composition of two operators governed by PDEs of distinct classes (hyperbolic/Goursat and parabolic/rectangular). 

\subsection{ Stabilization of PDEs with delays}

PDEs with time delays require sophisticated mathematical tools for designing  controllers.  Methods based on Lyapunov–Krasovskii functional (LKF) combined with Halanay inequality \cite{halanay1966method,selivanov2018delayed,fridman2014introduction},  Artstein model reduction technique in combination with predictor feedback \cite{prieur2018feedback,katz2020constructive,lhachemi2020feedback,katz2021finite,katz2021sub,lhachemi2022predictor} and the PDE backstepping predictor feedback design \cite{MKrstic2009,krstic2009dead}, have sparked significant leaps forward in the field. Our present work pertains to delay-compensated controllers procured from PDE backstepping techniques \cite{MKrstic2009}. 

Considered one of the systematic design approaches for PDE control,  backstepping was first introduced to design boundary controllers for delay-free PDEs and predictor feedback controllers for delay systems \cite{MKrstic2009,krstic2009dead}. Several research works have been published in the literature exploring the design of predictor feedback laws for linear and nonlinear finite-dimensional systems \cite{krstic2009input,bekiaris2013,diagne2017time,diagne2017compensation,bresch2009,zhu2020delay}. Recent years have seen the method applied to infinite-dimensional plants as a result of \cite{MKrstic2009,krstic2009dead}, which presents an exponentially stabilizing compensator for scalar reaction-diffusion  PDE with a long boundary input delay.  The instrumental framework resulted in the development of delay-adaptive controllers \cite{Wang2021adaptive, Wang2022} and compensators designed for input delays that vary spatially \cite{qi2020compensation} or for three-dimensional reaction-diffusion partial differential equations (PDEs) \cite{qi2019control}. Not limited to parabolic PDEs,   the approach extends to  hyperbolic PDEs subject to destabilizing boundary input delays \cite{krstic2009dead}. Likewise, an adaptive output feedback control for coupled hyperbolic systems with unknown plant's parameters and known actuator and sensor delays is developed in \cite{anfinsen2018adaptive} and trade-offs between convergence rate and delay robustness are unraveled by the authors of  \cite{auriol2018delay}.  Motivated by safe drilling operation management,   a delay-compensated control scheme  for a sandwich hyperbolic PDE in the presence of a sensor delay of arbitrary length was proposed in \cite{wang2020delay}. Linearized \textit{Aw-Rascle-Zhang} PDEs describing traffic systems as $2\times 2$ coupled hyperbolic PDEs are stabilized in \cite{qi2022delay} by designing a backstepping controller that compensates for a delayed distributed input. Moreover, the conception of bilateral boundary control for the stabilization of moving shockwaves in congested-free traffic systems that are governed by hyperbolic partial differential equations (PDEs) evolving over complementary time-varying domains, is presented in \cite{yu2020bilateral,zhao2022predictor}.  Using triggered batch least-squares identifiers (BaLSIs) \cite{karafyllis2018adaptive,karafyllis2019adaptive}, exponential regulation of both the plant and the actuator states, and exact identification of an unknown boundary input delay in finite time have been achieved in \cite{wang2023delay} for coupled hyperbolic PDE-ODE cascade systems. {A Lyapunov design delay-adaptive control law for first-order hyperbolic partial integro-differential equations (PIDEs) is proposed in \cite{qi2023delay}.} 

Despite the extensive range of these results, which 
guarantee 
exponential stability, the implementation of their  intricate control laws might deter widespread adoption. We leverage the advances in deep learning by employing the DeepONet framework to facilitate the usability of PDE control in applications. Specifically, we aim to expedite the generation of predictor-feedback control gain functions by approximating them using a trained neural network.

\subsection{Machine Learning (ML) in service of established, proof-equipped model-based PDE control and delay-compensating designs}

 Methods that expedite the computation of complicated gain functions of  model-informed control laws with the help of deep learning  have recently emerged \cite{bhan2023neural,krstic2023neural}. 
 They leverage a new breakthrough in neural networks and the associated mathematical theory, referred to as DeepONet, \cite{lu2021learning}, which allows to produce arbitrarily close approximations of nonlinear operators (function-to-function maps), including solutions to partial differential equations that arise either in physical models or in PDEs that govern the gains of PDE controllers. 

 
 The DeepONet theory generalizes the ``universal approximation theorem'' for functions \cite{hornik1989multilayer,cybenko1989approximation} to a universal approximation  for nonlinear operators \cite{chen1995universal,lu2021learning,lu2019deeponet,li2020fourier,li2020neural}. 
 For a class of PDEs equipped with a model-based (parameter functions-dependent) stabilizing control law, such as PDE backstepping, a change in the plant parameter functions only results in a re-computation of the controller gain functions through a DeepONet map of that control method learned in advance. The mapping from the functional coefficients of the plant to the controller gain functions is encoded in the neural network architecture, which executes the gain computation as a function evaluation, rather than requiring a solution to PDEs. This exceptionally attractive approach for PDE control applications, offers the advantage of retaining the theoretical guarantees of a nominal closed-loop system in its approximate counterpart for both delay-free hyperbolic \cite{bhan2023neural} and parabolic \cite{krstic2023neural} PDEs.

\subsection{Contribution of the paper}

A recent step from the development of DeepONet backstepping for PDEs \cite{bhan2023neural}, \cite{krstic2023neural} to the delay-PDE structures was made in \cite{qi2023neural}. However, in \cite{qi2023neural}, both the PDE plant and its input delay dynamics are of the hyperbolic kind and, as a result, the gain kernel equation is still a single PDE, as in the initial \cite{bhan2023neural}, \cite{krstic2023neural}. In this paper we not only advance to problems with multiple kernel PDEs, but to problems where the kernel PDEs are from different PDE classes and where the control gain function is the output of a composition of nonlinear operators defined by such multiple PDEs. 

We, specifically, develop DeepONet implementations and stability guarantees for the generalized version of the backstepping design for a reaction-diffusion PDE with input delay introduced in \cite{MKrstic2009}. Both stabilizing gain kernel functions and full-state feedback control laws of a reaction-diffusion PDE with spatially varying reactivity and constant boundary input delay are learned via DeepONet. 

Two kernel PDEs, and the associated nonlinear operators, arise in this backstepping design:
\begin{itemize}
    \item The PDE for the first kernel (`{\em backstepping} kernel') is 
    \begin{itemize}
        \item in the Goursat form (on a triangular domain), 
        \item of the second-order hyperbolic type, 
        \item with the reactivity function as the operator's input. 
    \end{itemize}
    \item The PDE for the second kernel (`{\em predictor} kernel') is 
    \begin{itemize}
        \item on a rectangular domain, 
        \item of a parabolic type, given exactly by the plant's reaction-diffusion model, 
        \item with the first PDE kernel as the initial condition of the second PDE, i.e., with the first  kernel  as the input to the operator that produces the second kernel. 
    \end{itemize}
\end{itemize}
The control gain functions are obtained as an output of a composition of the two nonlinear operators whose overall input is the reactivity function. This makes not only for challenging technical developments, but 
charts the path for how DeepONet implementations and theory are to be developed for more general coupled PDEs in the future. 

From the DeepONet perspective (i.e., to a researcher without interest in control but with interest in solving PDEs by machine learning), our first-of-its-kind problem setting  represents a   significant innovation. It approximates the solution of a heretofore unencountered operator, since such a combination of PDEs studied in the paper does not arise outside of the PDE {\em control} context.

In relation to paper \cite{krstic2023neural}, the present paper provides a major, methodology-expanding advance since the kernel operator in  \cite{krstic2023neural} is fed into an even more complex nonlinear operator, defined by the reaction-diffusion (RD) system, which is not solvable explicitly since the reactivity function, which acts as a {\em multiplicative input} to the RD PDE, makes this PDE neither explicitly solvable nor linear.

With the approximation properties that we prove for the Neural Operators (NOs) governed by the kernel PDEs, we provide a mathematical certificate for the exponential stability (in $L^2$ for the plant state and in $H^1$ for the input delay state) of the approximated delay-compensated closed-loop system. 

\textbf{\emph{Organization of paper.}} 
Section \ref{sec2} recalls the design of an exponentially stabilizing predictor feedback control law for the reaction-diffusion PDE with boundary input delay.    Section \ref{sec3} and \ref{Stab-Deep} present the  
approximation of predictor feedback kernels operators and the stabilization under the approximate controller gain functions via DeepONet. 
Section  \ref{Simul-kernel} presents extensive simulation results.
Conclusions are in Section \ref{conclude}.

\section{Delay-Compensated PDE Backstepping Design}\label{sec2}

Let us consider the scalar  reaction-diffusion PDE  with an actuator delay $D$ at its controlled boundary defined as   
\begin{align}
\label{equ-u}
u_t(x,t)=&u_{xx}(x,t)+\lambda(x) u(x,t),\\ 
\label{equ-u-bud}
u(0,t)=&0,\\ 
u(1,t)=&U(t-D),\label{equ-u-bud-U}
\end{align}
where the full-state $u(x,t), \, (x,t)\in (0,1)\times \mathbb{R}_+$ is measurable  and  the parameter  $\lambda(x)\in C^1([0,1])$ is space-varying.  Adopting  an infinite-dimensional description of the actuator state $v(x,t)=U(t+D(x-1)),$ the delayed input $U(t-D)$ can be written as an advection equation resulting into the following  PDE-PDE cascade system that is equivalent to  \eqref{equ-u}--\eqref{equ-u-bud-U} 
\begin{align}\label{equ-u-bis}
u_t(x,t)=&u_{xx}(x,t)+\lambda(x) u(x,t),\\
u(0,t)=&0,\\ 
u(1,t)=&v(0,t),\\
Dv_t(x,t)=&v_x(x,t),\label{v-pde}\\
v(1,t)=&U(t).\label{equ-ucas0}
\end{align}
Using  PDE backstepping method \cite{MKrstic2009}, one  can map  system \eqref{equ-u-bis}--\eqref{equ-ucas0} into the following exponentially stable target system 
\begin{align}
\label{equ-w0}
w_t(x,t)=&w_{xx}(x,t),\\
w(0,t)=&0,\\ 
w(1,t)=&z(0,t),\\
Dz_t(x,t)=&z_x(x,t),\\
z(1,t)=&0,\label{equ-z0}
\end{align}
where the following transformations have been employed:
\begin{align}
&w(x,t)=u(x,t)-\int_0^xk(x,y)u(y,t)\mathrm dy,\label{equ-tranwu}
\\
&z(x,t)=v(x,t)-\int_0^1\gamma(x,y)u(y,t)\mathrm dy\nonumber\\
&~~~~~~~~~~~ -D\int_0^xq(x-y)v(y,t)\mathrm dy\label{equ-tranzv}.
\end{align}
The gain kernels in \eqref{equ-tranwu}, \eqref{equ-tranzv} are governed by the set of PDEs.
For the kernel of $k(x,y)$ satisfies
\begin{align}\label{equ-kxx}
k_{xx}(x,y)=&k_{yy}(x,y)+\lambda(y) k(x,y),\quad \forall(x,y)\in \check \Omega_1,\\
k(x,x)=&-\frac{1}{2}\int_0^x\lambda(y)\mathrm dy,\label{equ-kx}\\
k(x,0)=&0,\label{equ-k0}
\end{align}
where $\check \Omega_1=\{0< y\leq x<1\}$ and $\Omega_1=\{0\leq y\leq x \leq 1\}$. The kernel $\gamma(x,y)$ satisfies 
\begin{align}
\gamma_x(x,y)=&D(\gamma_{yy}(x,y)+\lambda (y)\gamma(x,y)),\quad \forall(x,y)\in \check \Omega_2,\label{equ-gammax}\\
\gamma(x,1)=&0,\\
\gamma(x,0)=&0,\\
\gamma(0,y)=&k(1,y),\label{equ-gamma0}
\end{align}
where $\check \Omega_2=\{0\leq x\leq 1, 0<y<1\}$ and $\Omega_2=\{0\leq x\leq 1, 0\leq y\leq 1\}$. The kernel $q$ satisfies 
\begin{align}
q(x)=-\gamma_y(x,1).\label{equ-q0}
\end{align}
From \eqref{equ-ucas0}, \eqref{equ-tranzv} and \eqref{equ-z0}, the nominal  exponentially stabilizing boundary controller for the equivalent cascade system \eqref{equ-u-bis}--\eqref{equ-ucas0} is defined as follows \cite{MKrstic2009},
\begin{align}
\label{equ-U}
U(t)=\int_{0}^{1}\gamma(1,y)u(y,t)\mathrm dy+\int_0^1 q(1-y)v(y,t)\mathrm dy.
\end{align}
Consequently, a crucial theoretical question is whether one can achieve comparable stability properties when the kernel functions are replaced by neural operator approximations.  
For the paper's main result, we aim at deriving approximate exponential stability results from the gain kernel  approximations through the DeepONet universal approximation theorem \cite{lu2021advectionDeepONet} (see Theorem 2.1). 

\begin{figure}[t]
\centering
\includegraphics[width=0.485\textwidth]{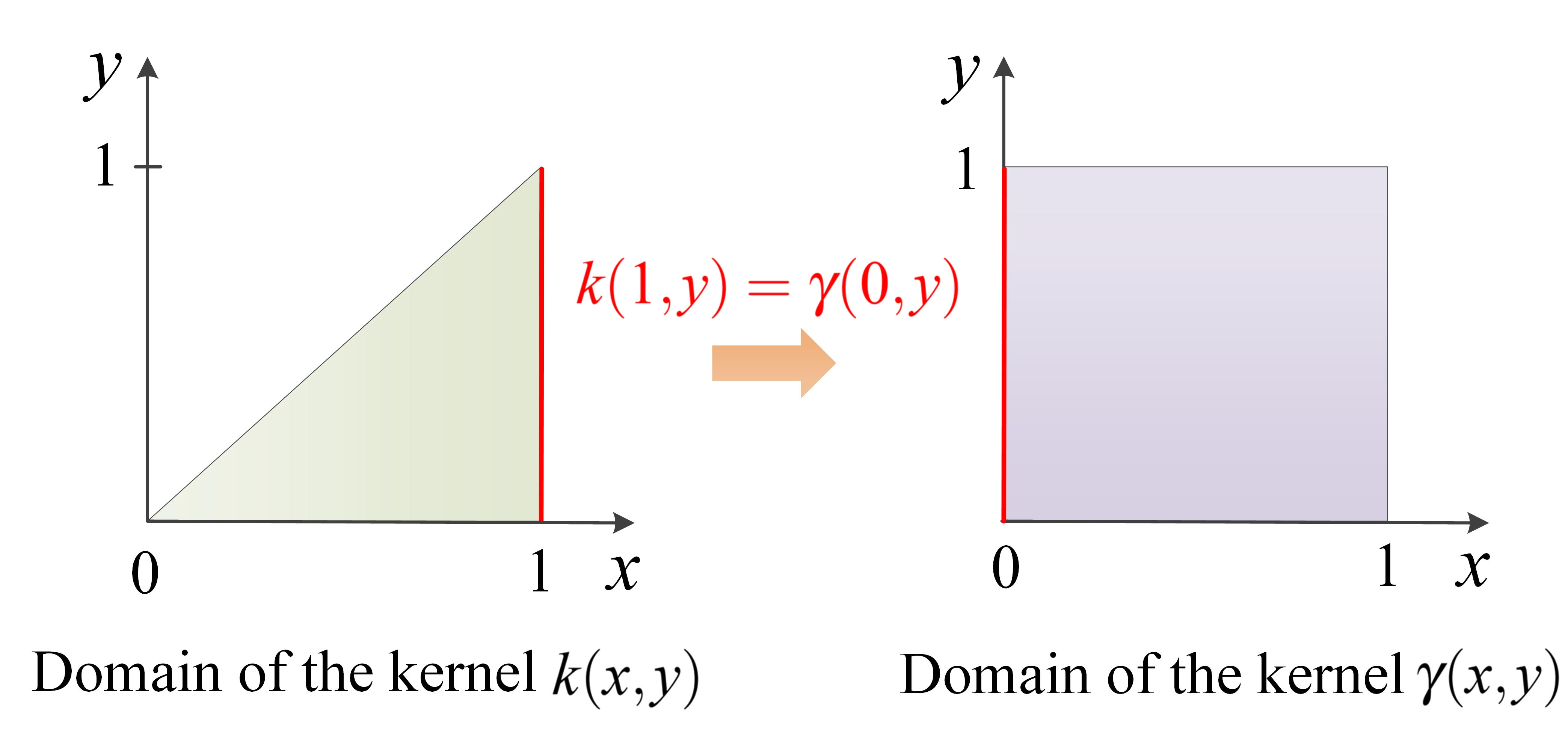}
\vspace{-0.21cm}
\caption{Triangular and  rectangular domains of the gain kernel PDEs. Left: Goursat (backsteppign) kernel. Right: reaction-diffusion (predictor) kernel. Connection: Goursat/backstepping kernel  serves as initial condition to reaction-diffusion/predictor kernel.} \label{Operator+}
\end{figure}

In the subsequent development, we treat \eqref{equ-kxx}--\eqref{equ-q0} as a cascade of two 
nonlinear operators:  
\begin{itemize}
    \item the ``Goursat kernel map" $\lambda \mapsto k$, whose output $k$ we shall refer to as the {\em backstepping kernel},
    \item the ``reaction-diffusion kernel map'' $k\mapsto \gamma$, whose output $\gamma$ we  refer to as the {\em predictor kernel}.
\end{itemize}
The domains of these two kernels (triangular for Goursat, and rectangular for reaction-diffusion), and their cascade connection, 
are depicted in Figure \ref{Operator+}. 

For the implementation of the controller in \eqref{equ-U}, whose gains are $\gamma(1,y)$ and $q(1-y)= -\gamma_y(1-y,1)$, the operator of interest is the composition operator $\lambda \mapsto \gamma$.

\section{Accuracy of Approximation of Backstepping Kernel Operator with DeepONet }\label{sec3}
\subsection{Boundedness of the gain kernel functions}

In this section we establish smoothness and furnish bounds on the kernels of the backstepping transformations. 

\begin{lemm}\label{lem1}
{\bf\em [bound on Goursat kernel]}
For every $\lambda\in C^1([0,1])$, the gain kernel $k(x,y)$ satisfying the PDE system \eqref{equ-kxx}--\eqref{equ-k0} has a unique $C^2(\Omega_1)$ solution with the following  property
\begin{align}
|k(x,y)|\leq \bar \lambda {\rm e}^{2\bar \lambda x},\quad\forall(x,y)\in\Omega_1,
\label{equ-k-bouded}
\end{align}
where $\bar \lambda=\sup_{x\in[0,1]}|\lambda(x)|$.
\end{lemm}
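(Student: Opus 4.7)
The plan is to reduce the Goursat problem for $k$ to a Volterra integral equation in characteristic coordinates and then solve it by Picard iteration. First I would introduce $\xi = x+y$, $\eta = x-y$ and set $G(\xi,\eta) := k(x,y)$. Since $k_{xx}-k_{yy} = 4G_{\xi\eta}$, the PDE \eqref{equ-kxx} becomes $G_{\xi\eta} = \tfrac{1}{4}\lambda\!\left(\tfrac{\xi-\eta}{2}\right) G$ on the triangle $\{0\le\eta\le\xi\le 2-\eta\}$, with data $G(\xi,0) = -\tfrac12\int_0^{\xi/2}\lambda(s)\,ds$ on $\eta=0$ (from \eqref{equ-kx}) and $G(\eta,\eta)=0$ on the diagonal (from \eqref{equ-k0}). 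Differentiating the first condition yields $G_\xi(\xi,0) = -\tfrac14\lambda(\xi/2)$, and integrating the PDE once in $\tau\in[0,\eta]$ and then in $s\in[\eta,\xi]$ produces the Volterra equation
\begin{equation*}
G(\xi,\eta) = -\tfrac14\int_\eta^\xi\!\lambda(s/2)\,ds + \tfrac14\int_\eta^\xi\!\!\int_0^\eta\!\lambda\!\left(\tfrac{s-\tau}{2}\right)G(s,\tau)\,d\tau\,ds.
\end{equation*}

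Next, I would set up the successive approximation $G^0\equiv 0$, $G^{n+1} = G_0 + K G^n$, where $G_0$ is the affine forcing and $K$ is the integral operator above. Writing $\Delta G^n := G^n - G^{n-1}$, so that $\Delta G^1 = G_0$ and $\Delta G^{n+1} = K\Delta G^n$, I would prove by induction on $n$ the weighted estimate
\begin{equation*}
|\Delta G^n(\xi,\eta)| \le \frac{\bar\lambda^{n}\,\xi^{2n-1}}{4\cdot 8^{n-1}(n-1)!},
\end{equation*}
using $|G_0|\le\tfrac{\bar\lambda}{4}(\xi-\eta)\le\tfrac{\bar\lambda}{4}\xi$ for $n=1$ and monotonicity of the double integral in the step $n\mapsto n+1$. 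Summing the telescoping series produces a uniformly convergent limit $G$ that solves the Volterra equation and obeys
\begin{equation*}
|G(\xi,\eta)| \le \tfrac{\bar\lambda\,\xi}{4}\exp\!\left(\tfrac{\bar\lambda\,\xi^2}{8}\right).
\end{equation*}
Reverting to $(x,y)$-coordinates via $\xi = x+y \le 2x$ and using $x\in[0,1]$ (so that $\tfrac{x}{2}\le 1$ and $\tfrac{x^2}{2}\le 2x$), I obtain $|k(x,y)|\le \bar\lambda e^{2\bar\lambda x}$ on $\Omega_1$.

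Finally, uniform convergence on the compact triangle supplies existence and continuity of $G$; Gronwall applied to the difference of two solutions of the Volterra equation supplies uniqueness; and the $C^1$-regularity of $\lambda$, combined with the two integrations, propagates enough smoothness to conclude that $G\in C^2$, hence $k\in C^2(\Omega_1)$ satisfying \eqref{equ-kxx}--\eqref{equ-k0} classically. I expect the main difficulty to be the bookkeeping of the induction---choosing the weight $\xi^{2n-1}$ so that $K$ regenerates the factorial $(n-1)!$ and the Neumann series collapses, after the elementary inequalities on $[0,1]$, to precisely the stated bound $\bar\lambda e^{2\bar\lambda x}$; cruder weight choices yield exponential control but with suboptimal constants.
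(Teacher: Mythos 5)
Your proposal is correct and is essentially the standard argument used in the references the paper delegates to (the paper itself gives no proof of Lemma~1, citing \cite{1369395,krstic2023neural}); the same characteristic change of variables $\xi=x+y$, $\eta=x-y$ and the resulting Volterra equation for $G(\xi,\eta)=k(x,y)$ also appears in the paper's own Appendix~A when it bounds the derivatives of $k$. The induction weight $\xi^{2n-1}$ with the factor $8^{n-1}(n-1)!$ closes correctly (using $\eta\le\xi$ and $\int_\eta^\xi s^{2n-1}\,ds\le\xi^{2n}/(2n)$), summing to $\tfrac{\bar\lambda\xi}{4}e^{\bar\lambda\xi^2/8}$, and the relaxations $\xi\le 2x$, $x/2\le 1$, $x^2/2\le 2x$ on $[0,1]$ recover the stated bound, so there is no gap.
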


\begin{proof}\em
    In \cite{1369395,krstic2023neural}.
\mbox{}\hfill$\blacksquare$
\end{proof}


Next, we turn our attention to the backstepping kernel functions $\gamma$ and $q$. We introduce a  space ${\Upsilon}\subset C^1((0,1]\times[0,1])$ of functions $\gamma(x,y)$ satisfying $\gamma[x] \in C^2([0,1])$ for all $x\in(0,1]$, namely, functions of two variables which are differentiable at least once in the first variable and at least twice in the second variable, for all positive values of the first variable. 

The next lemma is useful for providing the bounds for the gain kernels $\gamma(x,y)$ and $q(x)=-\gamma_y(x,1)$, based on \eqref{gamma-bound-derivative}. 

\begin{lemm}\label{lem2}
{\bf\em [bound on reaction-diffusion kernel, with Goursat kernel as initial condition]}
For every $\lambda,\ k(1,\cdot)\in C^1([0,1])$, the gain kernel $\gamma(x,y)$ satisfying the PDE system \eqref{equ-gammax}--\eqref{equ-gamma0} has a unique 
solution in the function space $\Upsilon$ with the following  property, for all $(x,y)\in \Omega_2$:
\begin{align}\label{gamma-bound-2}
|\gamma(x,y)|^2
\leq& {\rm e}^{2D\bar \lambda x}\int_0^1(k(1,y)^2+k_{y}(1,y)^2)\mathrm dy,\\
|\gamma_y(x,1)|^2\leq&(1+2 D\bar \lambda )\bar \lambda {\rm e}^{2D\bar \lambda x}\int_0^1k(1,y)^2\mathrm dy\nonumber\\
\nonumber&+\left(1+\bar \lambda+\frac{1}{D}\right) {\rm e}^{2D\bar \lambda x}\int_0^1k_{y}(1,y)^2\mathrm dy.\nonumber\\
 &+2 D {\rm e}^{2D\bar \lambda x}\int_0^1 k_{yy}(1,y)^2\mathrm dy.\label{gamma-bound-derivative}
\end{align}
\end{lemm}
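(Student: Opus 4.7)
The plan is to treat the PDE \eqref{equ-gammax}--\eqref{equ-gamma0} as a linear parabolic evolution in $x$ (with $x$ in the role of time and $y$ the spatial variable) with Dirichlet boundary conditions $\gamma(x,0)=\gamma(x,1)=0$ and initial datum $\gamma(0,\cdot)=k(1,\cdot)$, and to derive both estimates by $L^2$ energy methods in $y$, combined for the second one with a trace inequality at $y=1$. Well-posedness in $\Upsilon$ follows from standard linear parabolic theory: $D\lambda\in C^1([0,1])$ is bounded, the initial datum $k(1,\cdot)\in C^1([0,1])$ is compatible with the Dirichlet data (since $k(1,0)=0$ by \eqref{equ-k0}), and parabolic smoothing supplies the required $C^2$ regularity in $y$ for $x>0$.

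For \eqref{gamma-bound-2}, I would first multiply \eqref{equ-gammax} by $\gamma$ and integrate by parts in $y$; the Dirichlet boundary conditions eliminate boundary terms and produce $\frac{1}{2}\frac{d}{dx}\|\gamma(x,\cdot)\|_{L^2}^2 \leq D\bar\lambda\|\gamma(x,\cdot)\|_{L^2}^2$, so Gr\"onwall yields $\|\gamma(x,\cdot)\|_{L^2}^2 \leq e^{2D\bar\lambda x}\|k(1,\cdot)\|_{L^2}^2$. Next, multiplying \eqref{equ-gammax} by $-\gamma_{yy}$ and integrating by parts, using that $\gamma_x(x,0)=\gamma_x(x,1)=0$ (inherited from the Dirichlet BCs on $\gamma$), and splitting the cross term by Young's inequality, gives $\frac{d}{dx}\|\gamma_y(x,\cdot)\|_{L^2}^2 + D\|\gamma_{yy}(x,\cdot)\|_{L^2}^2 \leq D\bar\lambda^2\|\gamma(x,\cdot)\|_{L^2}^2$. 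Integrating in $x$ and inserting the previous Gr\"onwall bound yields $\|\gamma_y(x,\cdot)\|_{L^2}^2 \leq e^{2D\bar\lambda x}\bigl(\|k_y(1,\cdot)\|_{L^2}^2 + C\|k(1,\cdot)\|_{L^2}^2\bigr)$ for an explicit constant $C$. Since $\gamma(x,0)=0$, Cauchy--Schwarz applied to $\gamma(x,y)=\int_0^y\gamma_y(x,s)\,ds$ yields the pointwise bound $|\gamma(x,y)|^2 \leq \|\gamma_y(x,\cdot)\|_{L^2}^2$, from which \eqref{gamma-bound-2} follows.

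For \eqref{gamma-bound-derivative}, I would begin with the trace identity $\gamma_y(x,1)^2 = \|\gamma_y(x,\cdot)\|_{L^2}^2 + 2\int_0^1 y\,\gamma_y\gamma_{yy}\,dy$ (obtained from $\int_0^1 \partial_y(y\gamma_y^2)\,dy$) and apply Young's inequality with parameter $\epsilon>0$ to obtain $|\gamma_y(x,1)|^2 \leq (1+1/\epsilon)\|\gamma_y(x,\cdot)\|_{L^2}^2 + \epsilon\|\gamma_{yy}(x,\cdot)\|_{L^2}^2$. To control $\|\gamma_{yy}(x,\cdot)\|_{L^2}^2$ at a fixed $x$ without requiring $\lambda\in C^2$, I would exploit the PDE to write $\gamma_{yy}=\gamma_x/D-\lambda\gamma$, which reduces the task to bounding $\|\gamma_x(x,\cdot)\|_{L^2}^2$. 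Differentiating \eqref{equ-gammax} in $x$ shows that $\phi:=\gamma_x$ satisfies the same parabolic equation with Dirichlet boundary data and initial condition $\phi(0,y)=D\bigl(k_{yy}(1,y)+\lambda(y)k(1,y)\bigr)$; the $L^2$ energy argument already used for $\gamma$ then produces $\|\gamma_x(x,\cdot)\|_{L^2}^2 \leq e^{2D\bar\lambda x}\|\phi(0,\cdot)\|_{L^2}^2$, whose right-hand side is dominated by $\|k_{yy}(1,\cdot)\|_{L^2}^2$ and $\bar\lambda^2\|k(1,\cdot)\|_{L^2}^2$. Choosing $\epsilon$ so that the coefficient of $\|k_{yy}(1,\cdot)\|_{L^2}^2$ reaches the stated $2D$ and collecting all contributions produces \eqref{gamma-bound-derivative}. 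The main obstacle I anticipate is the precise bookkeeping of constants needed to reproduce the exact coefficients $(1+2D\bar\lambda)\bar\lambda$, $1+\bar\lambda+1/D$, and $2D$; conceptually, the crux is that a direct energy estimate on $\gamma_{yy}$ would require $\lambda\in C^2$, and the workaround is to propagate $\gamma_x$ instead through its own parabolic evolution (which inherits the same coefficients and clean Dirichlet data) and then recover $\gamma_{yy}$ algebraically from the PDE.
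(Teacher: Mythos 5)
Your overall strategy is essentially the one the paper uses: a Gr\"onwall estimate for $\|\gamma(x,\cdot)\|_{L^2}$, a dissipative $L^2$ energy estimate for $\|\gamma_y(x,\cdot)\|_{L^2}$, a one-dimensional embedding to get the pointwise bound, a weighted trace identity at $y=1$, and the observation that $\gamma_x$ obeys the same parabolic equation with clean Dirichlet data and initial condition $\gamma_x(0,y)=D\bigl(k_{yy}(1,y)+\lambda(y)k(1,y)\bigr)$, which avoids ever needing $\lambda\in C^2$. That last point is exactly the paper's device as well, and is the genuine insight here.

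Two local choices push your constants away from the ones stated in \eqref{gamma-bound-2}--\eqref{gamma-bound-derivative}, and both are easily repaired. First, for the pointwise bound you use the Poincar\'e form $|\gamma(x,y)|^2\leq\|\gamma_y(x,\cdot)\|_{L^2}^2$; the paper instead uses the Agmon-type inequality $|\gamma(x,y)|^2\leq 2\int_0^1\gamma\gamma_y\,\mathrm dy\leq\|\gamma(x,\cdot)\|_{L^2}^2+\|\gamma_y(x,\cdot)\|_{L^2}^2$, so the $\int_0^1 k(1,y)^2\,\mathrm dy$ term in \eqref{gamma-bound-2} comes with coefficient $1$ directly from the $L^2$-Gr\"onwall bound, rather than through the forcing term in the $\|\gamma_y\|$ estimate (which, as you note, carries a $\bar\lambda$-dependent factor and would not give coefficient $1$ for all $\bar\lambda,D$). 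Second, at the trace step your route first bounds $\|\gamma_{yy}\|_{L^2}^2\leq 2\|\gamma_x\|_{L^2}^2/D^2+2\bar\lambda^2\|\gamma\|_{L^2}^2$, importing a factor of $2$; the paper instead substitutes $\gamma_{yy}=\gamma_x/D-\lambda\gamma$ into the identity $\gamma_y(x,1)^2=\|\gamma_y\|_{L^2}^2+2\int_0^1 y\gamma_y\gamma_{yy}\,\mathrm dy$ \emph{before} applying Young's inequality term by term, which is how the coefficients $\bar\lambda$, $1+\bar\lambda+1/D$, and $1/D$ (and hence $2D$ after inserting the $\|\gamma_x\|$ bound) arise. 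With these two substitutions your argument reproduces the paper's proof.
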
\label{lemma-2}

\begin{proof}\em 
    In Appendix \ref{appendix-A}.
\mbox{}\hfill$\blacksquare$
\end{proof}

\subsection{Approximation of the neural operators }

The controller \eqref{equ-U} employs the gain functions $\gamma(1,y)$ and $- \gamma_y(1-y,1)$, which are both obtained from the predictor kernel $\gamma$, which  is governed by PDE  \eqref{equ-gammax}--\eqref{equ-gamma0}. This PDE has $k(1,y)$ as its initial condition, while, in turn, the backstepping kernel $k$'s PDE \eqref{equ-kxx}--\eqref{equ-k0} is driven by function $\lambda$. 

In summary, to implement controller \eqref{equ-U}, one needs to generate the output function $\gamma$ of the operator composition $\lambda \mapsto k \mapsto \gamma$, i.e., of the backstepping-predictor kernel cascade, for a given input function $\lambda$. 

Hence, our controller implementation calls for a neural approximation of the composition mapping $\lambda \mapsto \gamma$. In more precise terms, since the PDE  \eqref{equ-gammax}--\eqref{equ-gamma0} depends not only on the initial condition $k(1,y)$ but also on the reactivity $\lambda(y)$, we need a neural approximation of two operators,  $\lambda \mapsto k$ and $(\lambda, k(1,\cdot))\mapsto \gamma$, as shown in Figure \ref{Operator}.


Denote the sets of functions 
\begin{align}
&\underline K=\{k\in C^2(\Omega_1)|k(x,0)=0,\quad  \forall x\in [0,1]\},\\
&\underline \Gamma=\{\gamma\in \Upsilon 
|\gamma(x,0)=\gamma(x,1)=0, \quad \forall x\in [0,1]\}, 
\end{align} 
and define the operators ${\mathcal{K}_1}: C^1[0,1]\to \underline K$ and ${\mathcal{K}_2}: C^1[0,1]\times C^1[0,1]\to \underline \Gamma$, where
\begin{eqnarray}
{\mathcal{K}_1}(\lambda)(x,y)&:= &k(x,y),\label{neur-op1}\\
{\mathcal{K}_2}(\lambda,k(1,\cdot))(x,y) &:= &\gamma(x,y),\label{neur-op2}
\end{eqnarray}
respectively. This allows to introduce the operators  ${\mathcal{M}}_1:C^1[0,1]\to \underline K\times C^1[0,1]\times \Upsilon 
$  defined by 
\begin{align}
{\mathcal{M}}_1(\lambda)(x,y)\label{equ-M-1}
&:= (k(x,y),K_1(x),K_2(x,y))\,,
\end{align}
where
\begin{align}
&K_1(x)=2\frac{\mathrm d}{\mathrm dx}(k(x,x))+\lambda(x),\label{equ-k-1}\\
&K_2(x,y)=k_{xx}(x,y)-k_{yy}(x,y)-\lambda(y)k(x,y),\label{equ-k-2}
\end{align}
and  $\mathcal{M}_2:C^1[0,1]\times C^1[0,1]\to \underline\Gamma\times \Upsilon $  defined by
\begin{align}
{\mathcal{M}}_2(\lambda, k(1,\cdot))(x,y):=(\gamma(x,y),\Gamma(x,y)), \label{equ-M-2}
\end{align}
where
\begin{align}
\Gamma(x,y)=\gamma_x(x,y)-D\gamma_{yy}(x,y)-D\lambda(y) \gamma(x,y).\label{equ-gamma-1}
\end{align}

For the operators $\mathcal{M}_1$ and $\mathcal{M}_2$, we establish the following approximation theorem using \cite[Thm. 2.1]{lu2021advectionDeepONet}.

\begin{figure}[t]
\centering
\includegraphics[width=0.38\textwidth]{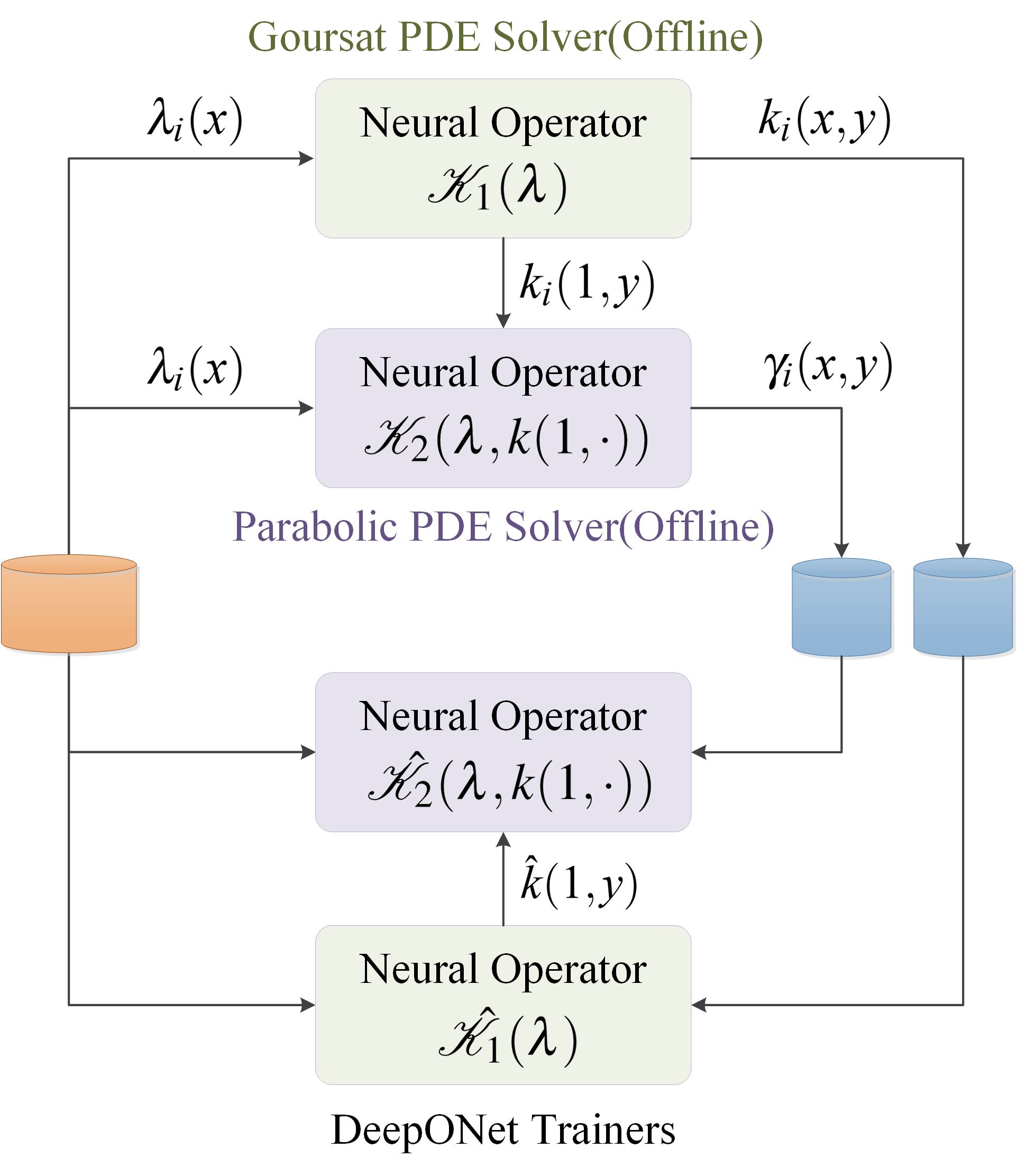}
\vspace{-0.21cm}
\caption{The process of learning the PDE backstepping design \textcolor{black}{in DeepONet involves two operators:
(1) for the first operator, $\lambda\mapsto k$, we compute multiple solutions of a kernel PDE \eqref{equ-kxx}--\eqref{equ-k0} in the Goursat form by substituting different functions $\lambda_i$.
(2) For the second operator described by the mapping $(\lambda,\ k(1,\cdot))\mapsto \gamma$, we solve the kernel PDE\eqref{equ-gammax}--\eqref{equ-gamma0}, which form likes a parabolic PDE, using the discretized form of the three-point difference method. This process is repeated multiple times with various functions $\lambda_i$ and their corresponding $k(1, y)$ values.} After this step, the neural operators $\hat{\mathcal{K}}_i,\ i=1,2$ are trained.} \label{Operator}
\end{figure}

\begin{theorem}
\label{thm-karniadakis-bkst}
{\bf\em [DeepONet approximation of kernels]}
Consider the neural operators defined in \eqref{equ-M-1} and \eqref{equ-M-2}, along with \eqref{equ-k-1}, \eqref{equ-k-2},  \eqref{equ-gamma-1}, \eqref{equ-kxx}--\eqref{equ-gamma0}. For all $B_\lambda,\ B_{\lambda'},\ B_{\lambda''}>0$ and $\epsilon>0$, there exist  neural operators  $\hat {{\mathcal{M}}}_1:C^1[0,1]\to \underline K\times C^1[0,1]\times \Upsilon $ and $\hat {\mathcal{M}}_2:C^1[0,1]\times C^1[0,1]\to  \underline \Gamma\times \Upsilon $ such that, 
\begin{align}
\nonumber&|{\mathcal M}_1(\lambda)(x,y)-\hat {\mathcal M}_1(\lambda)(x,y)|\\
&+|{\mathcal M}_2(\lambda, k(1,\cdot))(x,y)-\hat {\mathcal M}_2(\lambda, k(1,\cdot))(x,y)|<\epsilon,
\end{align}
holds for all Lipschitz $\lambda,\ k$ with the properties that $\rVert\lambda\rVert_\infty\leq B_\lambda$, $\rVert\lambda'\rVert_\infty\leq B_{\lambda'}$, $\rVert\lambda''\rVert_\infty\leq B_{\lambda''}$, namely, there exists neural operators $\hat {\mathcal{K}}_i,\ i=1,\ 2$ such that  
$ 
\hat {\mathcal{K}}_1(\lambda)(x,0)\equiv0, 
\hat {\mathcal{K}}_2(\lambda,k(1,\cdot))(x,0)=\hat {\mathcal{K}}_2(\lambda,k(1,\cdot))(x,1)\equiv0
$,  
and 
\begin{align}\label{equ-epsilon}
\nonumber&|\tilde k(x,y)|+|2\frac{\mathrm d}{\mathrm dx}(\tilde k(x,x)|+|\tilde k_{xx}(x,y)-\tilde k_{yy}(x,y)-\lambda(y)\tilde k(x,y)|\\
\nonumber&+|\tilde \gamma(x,y)|+|\tilde \gamma_{x}(x,y)|+|\tilde \gamma_{x}(x,y)-D\tilde \gamma_{yy}(x,y)-D\lambda(y)\tilde \gamma(x,y)|\\
&|\tilde k_{yyy}(x,y)|+|\tilde \gamma_y(x-y,1)|+|\tilde \gamma_{xy}(x-y,1)|<\epsilon,
\end{align}
where 
\begin{align}
\nonumber\tilde k(x,y)=&k(x,y)-\hat k(x,y)\\
=&{\mathcal{K}}_1(\lambda)(x,y)-\hat {\mathcal{K}}_1(\lambda)(x,y), \\
\nonumber\tilde\gamma(x,y)=&\gamma(x,y)-\hat \gamma(x,y)\\
=&{\mathcal{K}}_2(\lambda, k(1,\cdot))(x,y)-\hat {\mathcal{K}}_2(\lambda,k(1,\cdot))(x,y).
\end{align}
\end{theorem}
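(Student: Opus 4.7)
The plan is to apply the DeepONet universal approximation theorem \cite[Thm.~2.1]{lu2021advectionDeepONet} separately to the two operators $\mathcal{M}_1$ and $\mathcal{M}_2$, each promoted to a vector-valued form whose output components consist of the kernel itself, every auxiliary derivative that appears in \eqref{equ-epsilon}, and the PDE residuals $K_1,K_2,\Gamma$ (which vanish on the true kernels). The two preconditions I must supply are: (i) precompactness of the input sets in a suitable topology, and (ii) continuity of each operator on that compact set into a target space strong enough to control every term on the left-hand side of \eqref{equ-epsilon}. Once both are verified, the bound follows by summing $N$ component-wise errors of size $\epsilon/N$, where $N$ is the number of terms in \eqref{equ-epsilon}.

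First I would verify compactness. The admissible inputs lie in $\Lambda := \{\lambda\in C^2([0,1])\colon \|\lambda\|_\infty\le B_\lambda,\ \|\lambda'\|_\infty\le B_{\lambda'},\ \|\lambda''\|_\infty\le B_{\lambda''}\}$, and by Arzel\`a--Ascoli this set is precompact in $C^1([0,1])$. For $\mathcal{M}_2$, the second input slot is $k(1,\cdot) = \mathcal{K}_1(\lambda)(1,\cdot)$; from the Goursat regularity used in Lemma \ref{lem1}, together with differentiation of the successive-approximation series with respect to $\lambda$, I would show that the image $\{k(1,\cdot) : \lambda\in\Lambda\}$ is precompact in $C^2([0,1])$, so the joint input space for $\mathcal{M}_2$ is compact in $C^1([0,1])\times C^1([0,1])$ as required by the universal approximation theorem.

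Second I would prove the continuity of the kernel operators in the required norms. For $\mathcal{K}_1$, iterating the Picard expansion of the Goursat problem \eqref{equ-kxx}--\eqref{equ-k0} and differentiating term by term with respect to $\lambda$ yields Lipschitz dependence of $k$ on $\lambda$ in $C^3(\Omega_1)$; this covers $|\tilde k|$, $|\tfrac{d}{dx}\tilde k(x,x)|$, $|\tilde k_{xx}-\tilde k_{yy}-\lambda\tilde k|$ and the third-derivative term $|\tilde k_{yyy}|$, the last of which is precisely the reason the hypothesis imposes a bound on $\lambda''$. For $\mathcal{K}_2$, the parabolic energy estimates of Lemma \ref{lem2}, applied to $\gamma$ and to its $x$- and $y$-derivatives (obtained by differentiating \eqref{equ-gammax} and its boundary data), yield continuity of $(\lambda,k(1,\cdot))\mapsto \gamma$ in a norm controlling $\gamma,\gamma_x,\gamma_{yy}$ uniformly on $\Omega_2$ as well as the trace quantities $\gamma_y(\cdot,1)$ and $\gamma_{xy}(\cdot,1)$ used in the controller gain $q$. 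The homogeneous boundary identities $\hat{\mathcal{K}}_1(\lambda)(x,0)\equiv 0$ and $\hat{\mathcal{K}}_2(\lambda,k(1,\cdot))(x,0)=\hat{\mathcal{K}}_2(\lambda,k(1,\cdot))(x,1)\equiv 0$ would be hard-wired by post-multiplying the trunk output by $y$ in the first case and $y(1-y)$ in the second, which preserves accuracy up to a universal constant and is absorbed into the final rescaling of $\epsilon$.

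The principal obstacle is the continuity argument in step two at the level of \emph{third}-order derivatives and \emph{boundary traces}: the term $|\tilde k_{yyy}|$ demands that $\mathcal{K}_1$ be continuous into a space controlling three $y$-derivatives (forcing $\lambda\in C^2$), while $|\tilde\gamma_y(x-y,1)|$ and $|\tilde\gamma_{xy}(x-y,1)|$ require a trace control along $\{y=1\}$ that is not immediate from the bulk energy estimate. For the trace bounds I would either enlarge the target space to include the relevant $C^1$-trace norm and appeal to parabolic smoothing, or, alternatively, apply the estimate \eqref{gamma-bound-derivative} to the difference $\tilde\gamma$, which satisfies an inhomogeneous version of \eqref{equ-gammax} driven by the small residual $\Gamma$ and the small data error $\tilde k(1,\cdot)$ already controlled by the $\mathcal{M}_1$ approximation. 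Once these regularity points are settled, the remainder of the proof reduces to a routine invocation of the DeepONet universal approximation theorem on a vector-valued continuous operator over a compact input set.
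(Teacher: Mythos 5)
Your proposal follows essentially the same route as the paper: establish continuity of the two kernel operators $\mathcal{M}_1$, $\mathcal{M}_2$ (the paper appeals to Lemmas~\ref{lem1} and~\ref{lem2}, and the derivative bounds they rest on) and then invoke the DeepONet universal approximation theorem from \cite{lu2021advectionDeepONet}. You supply several details the paper's two-sentence proof leaves implicit — the Arzel\`a--Ascoli compactness of the input set, the vector-valued promotion of the operators, the $\epsilon/N$ splitting over components, and the mechanism for hard-wiring the homogeneous boundary conditions into the trunk net — and you correctly identify the third-derivative and trace-regularity points as the place where the $C^2$ bounds on $\lambda$ are actually used, which is consistent with the estimates the paper develops in Lemmas~\ref{lemma-k}, \ref{lemma3}, and~\ref{lemma4}.
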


\begin{proof}\em
    The continuity of the operators $\mathcal{M}_1$ and $\mathcal{M}_2$ follows from Lemmas \ref{lem1} and \ref{lem2}, respectively. The result is obtained by invoking \cite[Thm. 2.1]{lu2021advectionDeepONet}. 
\mbox{}\hfill$\blacksquare$
\end{proof}

Instead of approximating the multi-input map $(\mathcal{M}_1, \mathcal{M}_2)$ in Theorem \ref{thm-karniadakis-bkst}, we can obtain the remaining results of the paper also by considering the composition operator $\mathcal{M}= (\mathcal{M}_1, \mathcal{M}_2(\mathcal{M}_1)): \lambda \mapsto (k,K_1, K_2, \gamma, \Gamma)$, whose sole input function is $\lambda$. This appears more elegant but, in actual learning of the neural version of the operator, $\hat{\mathcal{M}}$, 
the training loss is bigger than for the neural operator $(\hat{\mathcal{M}}_1, \hat{\mathcal{M}}_2)$, especially for smaller datasets, so we stick with the approach 
in  Theorem \ref{thm-karniadakis-bkst}. 

In  Section \ref{Stab-Deep}, 
Theorem \ref{thm-karniadakis-bkst}
enables us to prove exponential stabilization of \eqref{equ-u-bis}--\eqref{equ-ucas0} with the control law  \eqref{equ-U} when the gain kernels are approximated via  DeepOnet using a collection of input-output data generated with 
{the spatially varying reactivity $\lambda(x)$}. For the control loop  depicted in Figure \ref{Learning-process}, we  prove that the gain learned offline as a  DeepOnet enforces closed-loop  stability with a quantifiable exponential decay rate.
\begin{figure}[t]
\centering
\includegraphics[width=0.485\textwidth]{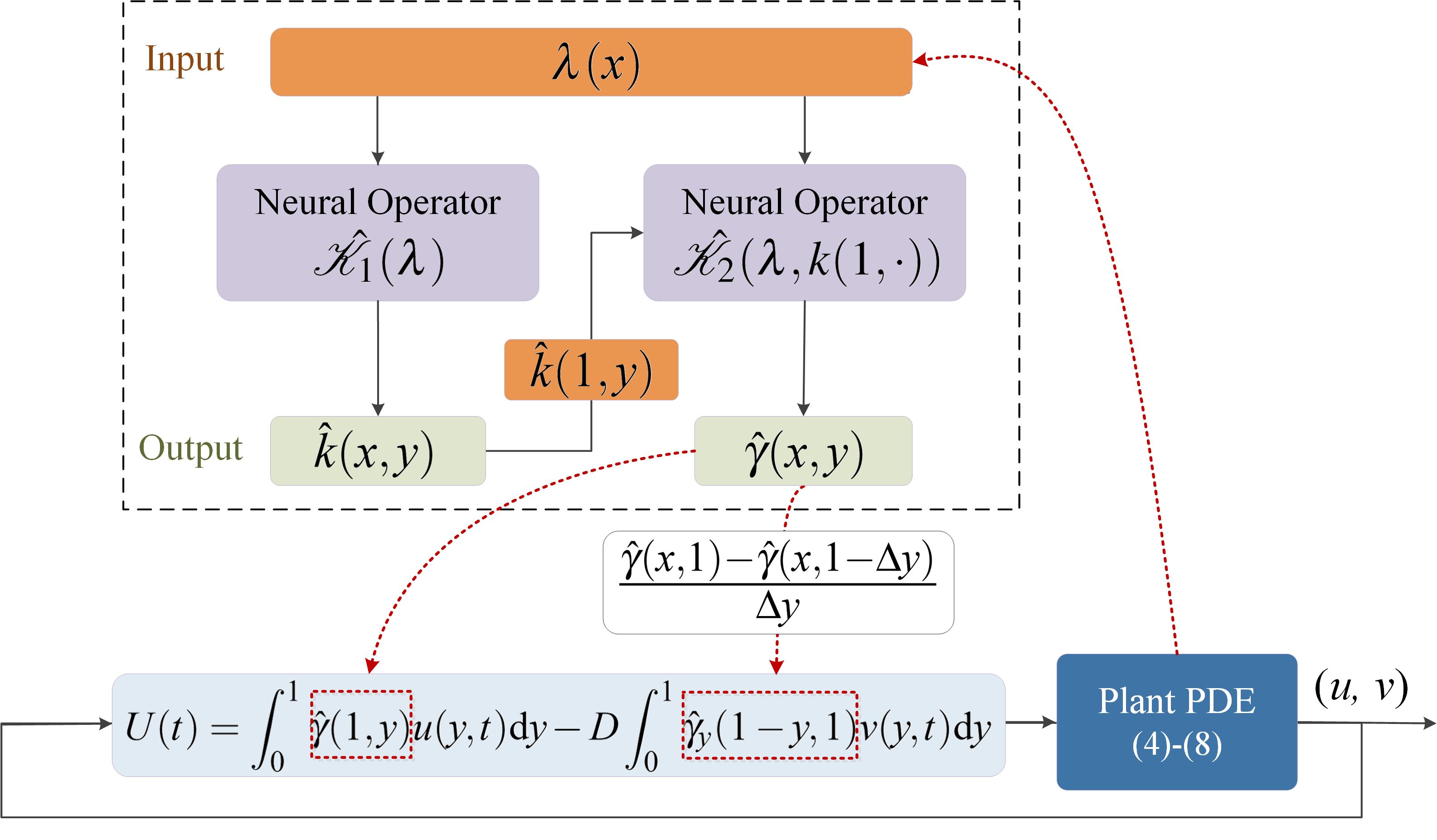}
\vspace{-0.21cm}
\caption{In the PDE backstepping control, the gain kernels are defined as $\gamma(x,y)$ and $\gamma_y(1-y,1)$. The first gain kernel is generated by the DeepONet $\hat{\mathcal K}_i,\ i=1,2$, is implemented by incorporating the learned neural operators into the control system, while the second gain kernel is obtained by numerically differentiating the first gain kernel directly.} \label{Learning-process}
\end{figure}

\section{Stabilization under DeepONet Gain Feedback}\label{Stab-Deep}

The backstepping transformations \eqref{equ-tranwu}, \eqref{equ-tranzv}  fed by the approximate gain kernel $\hat k(x,y)$ are defined as
\begin{align}\label{approx-trans-w}
\hat w(x,t)=&u(x,t)-\int_0^x\hat k(x,y)u(y,t)\mathrm dy,\\
\nonumber\hat z(x,t)=&v(x,t)-\int_0^1\hat \gamma(x,y)u(y,t)\mathrm dy\\
&-D\int_0^x\hat q(x-y)v(y,t)\mathrm dy,\label{approx-trans-z}
\end{align}
where $\hat k=\hat {\mathcal{K}}_1(\lambda)$, $\hat \gamma=\hat{\mathcal{K}}_2(\lambda, k(1,\cdot))$ and $\hat q(x)=-\hat \gamma_y(x,1)$ together with the approximate   control law
\begin{align}
U(t)=\int_0^1\hat \gamma(1,y)u(y,t)\mathrm dy-D\int_0^1\hat \gamma_y(1-y,1)v(y,t)\mathrm dy.\label{equ-hat-U}
\end{align}
By differentiation, equations  \eqref{approx-trans-w}, \eqref{approx-trans-z} and \eqref{equ-kxx}--\eqref{equ-q0} generate the   perturbed target system given below  
\begin{align}
\hat w_t(x,t)&=\hat w_{xx}(x,t)+\delta_1(x)u(x,t)+\int_0^x\delta_2(x,y)u(y,t)\mathrm dy,\label{equ-w0-hat}\\
\hat w(0,t)&=0,\\
\hat w(1,t)&=\hat z(0,t),\\
D\hat z_t(x,t)&=\hat z_x(x,t)+\int_0^1\delta_3(x,y)u(y,t)\mathrm dy,\\
\hat z(1,t)&=0,\label{equ-z1}
\end{align}
where $\delta_i,\ i=1,2,3$ are defined as 
\begin{align}
\delta_1(x)= &2\frac{\mathrm d}{\mathrm dx}\hat k(x,x)+\lambda(x)\nonumber\\
=&-2\frac{\mathrm d}{\mathrm dx}\tilde k(x,x),\\
\delta_2(x,y)=&\hat k_{xx}(x,y)-\hat k_{yy}(x,y)-\lambda(y)\hat k(x,y)\nonumber\\
=&-\tilde k_{xx}(x,y)+\tilde k_{yy}(x,y)+\lambda(y)\tilde k(x,y),\\
\delta_3(x,y)=&\hat \gamma_{x}(x,y)-D\hat \gamma_{yy}(x,y)-D\lambda(y)\hat \gamma(x,y)\nonumber\\
=&-\tilde\gamma_{x}(x,y)+D\tilde\gamma_{yy}(x,y)+D\lambda(y)\tilde \gamma(x,y).
\end{align}
Note that from \eqref{equ-epsilon},  the following inequalities hold
\begin{align}
&\rVert\delta_i\rVert_\infty\leq \epsilon,\quad i=1,2,3,\\
&\rVert\delta_{3x}\rVert_\infty\leq \epsilon.
\end{align}

Our first result for the closed-loop system is its exponential stability in the backstepping-transformed variables under the DeepONet-approximated kernels. 

\begin{propo}\label{prop-0}
{\bf\em [Lyapunov analysis for DeepONet-perturbed target system]}
Consider the target system \eqref{equ-w0-hat}--\eqref{equ-z1}. For each $D>0$, there exists $\varepsilon^*>0$ such that for all $\varepsilon\in (0,\varepsilon^*)$ there exist $c_1,\ c_2>0$ such that the following holds,
\begin{align}
\label{eq-Psi-exp-bound}
\Psi_1(t)\leq \Psi_1(0)c_2 {\rm e}^{-c_1 t}, \quad \forall \geq 0,
\end{align}
where
\begin{align}
\Psi_1(t)=\rVert\hat w(t)\rVert^2+\rVert\hat z(t)\rVert^2+\rVert\hat z_x(t)\rVert^2.\label{equ-Psi}
\end{align}
\end{propo}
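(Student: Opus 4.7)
My plan is to build a weighted Lyapunov functional for the triple $(\hat w, \hat z, \hat z_x)$ that, in the unperturbed case ($\delta_i \equiv 0$) would mirror the standard backstepping/predictor Lyapunov construction in \cite{MKrstic2009}, and then to show that the $\delta_i$ perturbations (all bounded by $\epsilon$ in the sup norm by Theorem~\ref{thm-karniadakis-bkst}) can be absorbed into the negative definite part for $\epsilon$ sufficiently small. Concretely, I would take
\begin{align*}
V(t) = \rVert\hat w(t)\rVert^2 + a\!\!\int_0^1\!(1+x)\hat z(x,t)^2\mathrm dx + b\!\!\int_0^1\!(1+x)\hat z_x(x,t)^2\mathrm dx,
\end{align*}
with positive constants $a,b$ to be selected. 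Since $1\le 1+x\le 2$, $V$ is equivalent to $\Psi_1$, so an exponential bound on $V$ transfers to $\Psi_1$.

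First, I would differentiate $\rVert\hat w\rVert^2$ using \eqref{equ-w0-hat}--\eqref{equ-z1} and integration by parts. This yields $-2\rVert\hat w_x\rVert^2 + 2\hat w_x(1,t)\hat z(0,t)$ plus two perturbation terms of the form $\int\hat w\,\delta_1 u\,\mathrm dx$ and a double integral involving $\delta_2$. Poincar\'e's inequality (since $\hat w(0,t)=0$) gives $-\rVert\hat w_x\rVert^2\le -\tfrac{\pi^2}{4}\rVert\hat w\rVert^2$, which produces a negative $\rVert\hat w\rVert^2$ term and a residual $-\rVert\hat w_x\rVert^2$ reserve used in a moment. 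Next, differentiating $\int(1+x)\hat z^2$ and using $\hat z(1,t)=0$ gives $-\hat z(0,t)^2 - \rVert\hat z\rVert^2$ plus a $\delta_3$-perturbation. The negative $-\hat z(0,t)^2$ is then combined with Young's inequality to dominate the cross-term $2\hat w_x(1,t)\hat z(0,t)$, after invoking a trace estimate $|\hat w_x(1,t)|^2\le c\bigl(\rVert\hat w_x\rVert^2+\rVert\hat w_{xx}\rVert^2\bigr)$ and absorbing $\rVert\hat w_x\rVert^2$ into the reserve.

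For $\int(1+x)\hat z_x^2$, I would differentiate the $\hat z$-PDE in $x$ to see that $\hat z_x$ satisfies a transport equation with source $\int_0^1\delta_{3x}(x,y)u(y,t)\,\mathrm dy$. Integration by parts then yields $-\hat z_x(0,t)^2-\rVert\hat z_x\rVert^2+2\hat z_x(1,t)^2$ plus a perturbation of order $\epsilon\rVert u\rVert$. The boundary term $\hat z_x(1,t)$ is \emph{not} free but is itself $O(\epsilon)$: evaluating the $\hat z$-PDE at $x=1$ and using $\hat z(1,t)=0\Rightarrow \hat z_t(1,t)=0$ gives $\hat z_x(1,t)=-\int_0^1\delta_3(1,y)u(y,t)\mathrm dy$, so $\hat z_x(1,t)^2\le\epsilon^2\rVert u\rVert^2$. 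All occurrences of $\rVert u\rVert$ are then converted to $\rVert\hat w\rVert$ through the inverse of the Volterra transformation \eqref{approx-trans-w}, which is bounded since $\hat k$ is uniformly bounded (by Lemma~\ref{lem1} and the $\epsilon$-closeness from Theorem~\ref{thm-karniadakis-bkst}).

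Combining everything, one obtains, for properly chosen $a,b$, an estimate of the form $\dot V \le -\alpha V + \beta\,\epsilon\,V$ with $\alpha,\beta>0$ independent of $\epsilon$. Choosing $\varepsilon^* = \alpha/(2\beta)$ gives $\dot V\le -\tfrac{\alpha}{2} V$, hence $V(t)\le V(0)\mathrm e^{-\alpha t/2}$, and the equivalence $V\sim \Psi_1$ finishes the argument with explicit $c_1=\alpha/2$ and $c_2$ determined by the weight bounds. The main obstacle will be the bookkeeping of the trace terms $\hat w_x(1,t)$ and $\hat z_x(1,t)$ that arise from integration by parts: the first must be absorbed using the $\rVert\hat w_x\rVert^2$ reserve together with the negative $-\hat z(0,t)^2$ produced by the weighted $\hat z$-integral, while the second has to be shown to be \emph{itself} of order $\epsilon$ via the $\hat z$-PDE trace at $x=1$, since it has the ``wrong sign'' in the $\hat z_x$ energy identity and would otherwise be uncontrollable.
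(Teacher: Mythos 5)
Your overall strategy---a weighted Lyapunov functional over $(\hat w,\hat z,\hat z_x)$ with $(1+x)$ weights on the transport subsystem, the observation that $\hat z_x(1,t)=-\int_0^1\delta_3(1,y)u(y,t)\,\mathrm dy$ is itself $O(\epsilon)$, the conversion of $\rVert u\rVert$ to $\rVert\hat w\rVert$ via the inverse Volterra kernel, and the final $\dot V\le(-\alpha+\beta\epsilon)V$ absorption---matches the paper's proof in broad outline. However, there is a genuine gap in your treatment of the boundary cross-term $\hat w_x(1,t)\hat z(0,t)$ that arises from integrating $\int\hat w\,\hat w_{xx}$ by parts. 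You invoke the trace estimate $|\hat w_x(1,t)|^2\le c\bigl(\rVert\hat w_x\rVert^2+\rVert\hat w_{xx}\rVert^2\bigr)$, but the only $\hat w$-dissipation available after integration by parts is $-\rVert\hat w_x\rVert^2$; there is no $-\rVert\hat w_{xx}\rVert^2$ term anywhere in your Lyapunov derivative to absorb the $\rVert\hat w_{xx}\rVert^2$ that the trace bound produces. Even the tunable Agmon form $|\hat w_x(1)|^2\le\rho\rVert\hat w_{xx}\rVert^2+(C/\rho)\rVert\hat w_x\rVert^2$ does not help: absorbing the $\rVert\hat w_x\rVert^2$ contribution into your reserve forces $\rho$ to be bounded away from zero, leaving an uncontrolled $\rho\rVert\hat w_{xx}\rVert^2$. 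The estimate as proposed does not close.

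The paper avoids this difficulty entirely by first changing variables to $\hat m(x,t)=\hat w(x,t)-x\hat z(0,t)$, which has \emph{homogeneous} Dirichlet conditions $\hat m(0,t)=\hat m(1,t)=0$, so the $\hat m$-energy identity produces $-\rVert\hat m_x\rVert^2$ with no boundary term at all. The cost is a new source $-x\,z_t(0,t)$ in the $\hat m$-equation; using the $\hat z$-PDE at $x=0$, $z_t(0,t)=\tfrac{1}{D}\bigl(\hat z_x(0,t)+\int_0^1\delta_3(0,y)u\,\mathrm dy\bigr)$, and the resulting $\hat z_x(0,t)$ contribution is absorbed by Young's inequality against the $-\hat z_x(0,t)^2$ produced by the weighted $\hat z_x$-energy. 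You should either adopt this homogenizing substitution, or add a $\rVert\hat w_x\rVert^2$ term to the Lyapunov functional (which does produce $-\rVert\hat w_{xx}\rVert^2$ but spawns new boundary terms $\hat w_x\hat w_{xx}$ at $x=0,1$ and is considerably messier); as written, the cross-term is not controlled.
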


The proof of Proposition \ref{prop-0} is given in Appendix \ref{lemma-3-proof}.

To translate the stability of the target system into that of the original closed-loop system,  {we consider transformations \eqref{approx-trans-w} and \eqref{approx-trans-z}, along with their inverse transformations, 
\begin{align}
u(x,t)=&\hat w(x,t)+\int_0^x\hat l(x,y)\hat w(y,t)\mathrm dy,\label{equ-inv-trans-u}\\
v(x,t)=&\hat z(x,t)+\int_0^1\hat \eta(x,y)\hat w(y,t)\mathrm dy\nonumber\\
&+D\int_0^x\hat p(x-y)\hat z(y,t)\mathrm dy,\label{equ-inv-trans-v}
\end{align}
and provide the following proposition.

\begin{propo}\label{propo1}
{\bf\em [norm equivalence with DeeepONet kernels]}
The following estimates hold  between the plant  \eqref{equ-u-bis}--\eqref{equ-ucas0}  and the target system \eqref{equ-w0-hat}--\eqref{equ-z1}, 
\begin{align}\label{S1S2}
S_1\Psi_1(t)\leq \Phi(t) \leq S_2\Psi_1(t)
\end{align}
where 
\begin{align}
\Phi(t)=\rVert u(t)\rVert^2+\rVert v(t)\rVert^2+\rVert v_x(t)\rVert^2,
\end{align}
and  the positive constants
\begin{align}
\nonumber S_1=&9+2\int_0^1\int_0^x\hat k(x,y)^2\mathrm dx\mathrm dy+3\int_0^1\int_0^1\hat \gamma(x,y)^2\mathrm dx\mathrm dy\\
\nonumber&+4\int_0^1\int_0^1\hat \gamma_x(x,y)^2\mathrm dx\mathrm dy+3D^{2}\int_0^1\int_0^x\hat q(x-y)^2\mathrm dx\mathrm dy\\
&+4D^{2}\hat q(0)^2+4D^{2}\int_0^1\int_0^x\hat q'(x-y)^2\mathrm dx\mathrm dy,\label{equ-S-1-0}\\
\nonumber S_2=&9+2\int_0^1\int_0^x\hat l(x,y)^2\mathrm dx\mathrm dy+3\int_0^1\int_0^1\hat \eta(x,y)^2\mathrm dx\mathrm dy\\
\nonumber&+4\int_0^1\int_0^1\hat \eta_x(x,y)^2\mathrm dx\mathrm dy+3D^{2}\int_0^1\int_0^x\hat p(x-y)^2\mathrm dx\mathrm dy\\
&+4D^{2}\hat p(0)^2+4D^{2}\int_0^1\int_0^x\hat p'(x-y)^2\mathrm dx\mathrm dy\label{equ-S-2-0}
\end{align}
are bounded.
\end{propo}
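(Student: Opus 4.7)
The plan is to establish the two inequalities separately by a squaring + Cauchy--Schwarz argument applied to the two transformation pairs. The upper bound $\Phi(t)\le S_2\Psi_1(t)$ will come from the inverse transformations \eqref{equ-inv-trans-u}--\eqref{equ-inv-trans-v}; the lower bound (which, since $S_1\ge 9>1$, must be understood in its natural form $\Psi_1(t)\le S_1\Phi(t)$) will come symmetrically from the direct transformations \eqref{approx-trans-w}--\eqref{approx-trans-z}.

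For the upper bound I would first square \eqref{equ-inv-trans-u}, apply $(a+b)^2\le 2(a^2+b^2)$ together with Cauchy--Schwarz on the kernel integral, and integrate in $x$ to obtain $\|u(t)\|^2\le \bigl(2+2\int_0^1\!\int_0^x\hat l(x,y)^2\,\mathrm dy\,\mathrm dx\bigr)\|\hat w(t)\|^2$. The same procedure on \eqref{equ-inv-trans-v}, using $(a+b+c)^2\le 3(a^2+b^2+c^2)$, bounds $\|v(t)\|^2$ by $\|\hat w(t)\|^2$ and $\|\hat z(t)\|^2$ through the $3\int\int\hat\eta^2$ and $3D^2\int\int\hat p^2$ contributions. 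The $\|v_x(t)\|^2$ bound requires first differentiating \eqref{equ-inv-trans-v} in $x$,
\beqnn
v_x=\hat z_x+\int_0^1\hat\eta_x(x,y)\hat w(y,t)\,\mathrm dy+D\hat p(0)\hat z(x,t)+D\int_0^x\hat p'(x-y)\hat z(y,t)\,\mathrm dy,
\eeqnn
and then applying $(a+b+c+d)^2\le 4(a^2+b^2+c^2+d^2)$ plus Cauchy--Schwarz; this produces exactly the coefficients $4$, $4\int\int\hat\eta_x^2$, $4D^2\hat p(0)^2$, $4D^2\int\int\hat p'^2$ appearing in \eqref{equ-S-2-0}. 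Summing the three resulting inequalities yields separate coefficients on $\|\hat w\|^2$, $\|\hat z\|^2$, $\|\hat z_x\|^2$; bounding each of them by their total (every term being nonnegative) consolidates everything into the single prefactor $S_2$ and, in particular, accounts for the additive constant $9=2+3+4$. The lower bound is obtained by literally replaying these steps on \eqref{approx-trans-w}--\eqref{approx-trans-z} and on the $x$-derivative of \eqref{approx-trans-z}, which yields $\Psi_1(t)\le S_1\Phi(t)$ with $\hat l,\hat\eta,\hat p$ replaced by $\hat k,\hat\gamma,\hat q=-\hat\gamma_y(\cdot,1)$, exactly reproducing \eqref{equ-S-1-0}.

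The main obstacle is the finiteness of $S_1$ and $S_2$. For $S_1$ the required $L^\infty$ bounds on $\hat k$, $\hat\gamma$, $\hat\gamma_y(\cdot,1)$, $\hat\gamma_x$ and $\hat\gamma_{xy}$ follow from Lemmas \ref{lem1}--\ref{lem2} combined with the uniform $\epsilon$-closeness in Theorem \ref{thm-karniadakis-bkst}, which transfers the bounds to the neural approximants. For $S_2$ the inverse kernels $\hat l,\hat\eta,\hat p$ are not covered by those lemmas; the cleanest route is to observe that, upon substituting \eqref{approx-trans-w}--\eqref{approx-trans-z} into \eqref{equ-inv-trans-u}--\eqref{equ-inv-trans-v} and matching coefficients, $\hat l,\hat\eta,\hat p$ satisfy PDEs of the same structural form (Goursat triangle for $\hat l$, parabolic rectangle for $\hat\eta$, trace relation for $\hat p$) as those solved by $\hat k,\hat\gamma,\hat q$, so that the successive-approximation and energy-integration estimates used in Lemmas \ref{lem1}--\ref{lem2} can be re-run on this mirror system to obtain the $L^\infty$ bounds polynomially in $\bar\lambda$ and $D$. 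This regularity transfer for the inverse kernels is the only nontrivial technical step; the remainder is routine bookkeeping.
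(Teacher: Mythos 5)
Your core mechanics — squaring each transformation, applying $(a+b)^2\le 2(a^2+b^2)$, $(a+b+c)^2\le 3(\cdots)$, $(a+b+c+d)^2\le 4(\cdots)$ plus Cauchy--Schwarz, differentiating the $z$- and $v$-transformations in $x$ for the third norm, and consolidating into the prefactors $9+\cdots$ — match the paper's proof exactly, including your (correct) observation that the stated lower bound must really be $\Psi_1\le S_1\Phi$ rather than $S_1\Psi_1\le\Phi$ given that $S_1\ge 9$. Where you diverge is in the boundedness argument for $S_1$ and $S_2$, and this is where your proposal has a gap.

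For $S_1$ you assert that the needed $L^2$ bounds on $\hat\gamma_x$, $\hat\gamma_y(\cdot,1)$, and $\hat\gamma_{xy}(\cdot,1)$ ``follow from Lemmas \ref{lem1}--\ref{lem2}.'' They do not: Lemma \ref{lem2} only bounds $|\gamma(x,y)|$ and $|\gamma_y(x,1)|$, not $\gamma_x$ or $\gamma_{xy}(\cdot,1)$, and the latter two enter $S_1$ through the $4\int\!\int\hat\gamma_x^2$ and $4D^2\int\!\int\hat q'^2$ terms. The paper supplies these through two additional lemmas (Lemmas \ref{lemma3}--\ref{lemma4} in the appendix), which are proved by energy estimates on the $\gamma$-PDE: multiplying $\gamma_x=D\gamma_{yy}+D\lambda\gamma$ and its $x$-derivatives by weighted test functions, integrating by parts, and bootstrapping back to $k(1,\cdot)$, $k_y(1,\cdot)$, $k_{yy}(1,\cdot)$, $k_{yyy}(1,\cdot)$ via the initial condition $\gamma(0,y)=k(1,y)$; boundedness of those traces in turn rests on Lemma \ref{lemma-k}, which bounds $k_y$, $k_{yy}$, $k_{yyy}$. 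Your proposal skips this entire layer.

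For $S_2$ you propose deriving PDEs for $\hat l,\hat\eta,\hat p$ ``of the same structural form'' and re-running the lemmas on that mirror system. The paper takes a shorter route: it writes the inverse kernels as solutions of Volterra integral equations in the direct kernels, namely $\hat l(x,y)=\hat k(x,y)+\int_y^x\hat k(x,\xi)\hat l(\xi,y)\,\mathrm d\xi$, $\hat p(x)=\hat q(x)+D\int_0^x\hat q(x-\xi)\hat p(\xi)\,\mathrm d\xi$, and a similar relation for $\hat\eta$ involving $\hat\gamma$, $\hat l$, $\hat q$. Successive approximations on these Volterra equations immediately give $\|\hat l\|_\infty\le\|\hat k\|_\infty e^{\|\hat k\|_\infty}$, $\|\hat p\|_\infty\le\|\hat q\|_\infty e^{\|\hat q\|_\infty}$, $\|\hat\eta\|_\infty\le(1+\|\hat l\|_\infty)\|\hat\gamma\|_\infty e^{\|\hat q\|_\infty}$, and differentiating the Volterra identities in $x$ then bounds $\hat p'$ and $\hat\eta_x$ in terms of already-controlled quantities. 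This bypasses any need to re-derive and re-estimate a PDE system for the inverse kernels, which would in any case not have exactly the form of \eqref{equ-kxx}--\eqref{equ-gamma0} and would require fresh well-posedness work. So while your overall architecture is right, the boundedness step needs (a) the additional $L^2$ estimates on $\gamma_x$ and $\gamma_{xy}(\cdot,1)$ that Lemma \ref{lem2} does not provide, and (b) the Volterra-relation route (not a PDE re-derivation) for the inverse kernels.
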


\begin{proof}\em
In Appendix \ref{appendix-B}. Similar to  proof of \cite[Prop. 1]{Wang2021adaptive}.
\mbox{}\hfill$\blacksquare$
\end{proof}

Based on the norm-equivalence in Proposition \ref{propo1}, the  main result in the next theorem immediately follows from Proposition \ref{prop-0}.

\begin{theorem}\label{theo2}
{\bf\em [main result---stabilization with DeepONet]}
Consider the system \eqref{equ-u-bis}--\eqref{equ-ucas0} or, equivalently,  system \eqref{equ-u}--\eqref{equ-u-bud-U},  with any $\lambda\in {C}^1([0,1])$ whose derivatives $\lambda'$ and $\lambda''$ are Lipschitz and which satisfy $\rVert\lambda\rVert_\infty\leq B_\lambda$, $\rVert\lambda'\rVert_\infty\leq B_{\lambda'}$, $\rVert\lambda''\rVert_\infty\leq B_{\lambda''}$, and $B_\lambda,\ B_{\lambda'},\ B_{\lambda''}>0$. There exists a sufficiently small $\epsilon^*(B_\lambda,\ B_{\lambda'},\ B_{\lambda''})>0$, such that the feedback controller \begin{align}\label{Learned-control}
U(t)=\int_0^1\hat \gamma(1,y)u(y,t)\mathrm dy-D\int_0^1\hat \gamma_y(1-y,1)v(y,t)\mathrm dy,
\end{align} 
with two neural operators $\hat {\mathcal{M}}_i,\ i=1,2$ of approximation accuracy $\epsilon\in(0,\epsilon^*)$, in relation to the exact backstepping kernels $k(x,y)$ and $\gamma(x,y)$, respectively, ensure that the closed-loop system satisfies the following exponential stability bound,
\begin{align}\label{stabilitybound}
\Phi(t)\leq \Phi(0)\frac{S_2}{S_1}c_2 {\rm e}^{-c_1 t}\,,
\quad\forall t\geq 0\,.
\end{align}
\end{theorem}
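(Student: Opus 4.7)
The plan is to chain Proposition \ref{prop-0} (exponential stability of the perturbed target system) with Proposition \ref{propo1} (norm equivalence between plant and target variables) and invoke Theorem \ref{thm-karniadakis-bkst} to make the perturbations $\delta_1,\delta_2,\delta_3$ as small as required. First, I would apply Theorem \ref{thm-karniadakis-bkst} with the given constants $B_\lambda, B_{\lambda'}, B_{\lambda''}$ to produce, for any desired approximation accuracy $\epsilon>0$, neural operators $\hat{\mathcal{M}}_1, \hat{\mathcal{M}}_2$ whose outputs $\hat k, \hat\gamma$ satisfy the termwise bound \eqref{equ-epsilon}. From the definitions of $\delta_1,\delta_2,\delta_3$ and $\delta_{3x}$ in terms of $\tilde k, \tilde\gamma$, the bound \eqref{equ-epsilon} immediately yields $\rVert\delta_i\rVert_\infty\leq\epsilon$ for $i=1,2,3$ and $\rVert\delta_{3x}\rVert_\infty\leq\epsilon$.

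Next, let $\varepsilon^\ast$ be the threshold furnished by Proposition \ref{prop-0}, and choose $\epsilon^\ast(B_\lambda,B_{\lambda'},B_{\lambda''})\in(0,\varepsilon^\ast)$. For any $\epsilon\in(0,\epsilon^\ast)$, Proposition \ref{prop-0} applies to the perturbed target system \eqref{equ-w0-hat}--\eqref{equ-z1} driven by the approximate gain \eqref{Learned-control}, giving constants $c_1,c_2>0$ with
\begin{equation*}
\Psi_1(t)\leq c_2\,\Psi_1(0)\,{\rm e}^{-c_1 t}, \qquad \forall t\geq 0.
\end{equation*}
Now combine this with the two-sided equivalence of Proposition \ref{propo1}. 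The upper bound gives $\Phi(t)\leq S_2\Psi_1(t)$, and applying the lower bound at $t=0$ yields $\Psi_1(0)\leq S_1^{-1}\Phi(0)$. Chaining these produces
\begin{equation*}
\Phi(t)\leq S_2\,\Psi_1(t)\leq S_2 c_2\,\Psi_1(0)\,{\rm e}^{-c_1 t}\leq \frac{S_2}{S_1}\,c_2\,\Phi(0)\,{\rm e}^{-c_1 t},
\end{equation*}
which is exactly \eqref{stabilitybound}.

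The only subtle point, and the one worth dwelling on, is that the conclusion requires $S_1$ and $S_2$ in \eqref{equ-S-1-0}--\eqref{equ-S-2-0} to be finite and strictly positive uniformly in the particular $\lambda$ drawn from the class $\{\rVert\lambda\rVert_\infty\leq B_\lambda,\ \rVert\lambda'\rVert_\infty\leq B_{\lambda'},\ \rVert\lambda''\rVert_\infty\leq B_{\lambda''}\}$. For the direct-transform kernels $\hat k, \hat\gamma, \hat q$ this follows by combining Lemmas \ref{lem1}--\ref{lem2} (which bound $k, \gamma, \gamma_y$ in terms of $\bar\lambda$) with the approximation error \eqref{equ-epsilon}, via $|\hat k|\leq|k|+|\tilde k|$, etc. For the inverse-transform kernels $\hat l, \hat\eta, \hat p$ appearing in \eqref{equ-inv-trans-u}--\eqref{equ-inv-trans-v}, analogous bounds are obtained by treating their defining PDEs in exactly the same way as for $\hat k, \hat\gamma, \hat q$, and these bounds are indeed established inside Proposition \ref{propo1}. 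Once finiteness of $S_1,S_2$ is secured, the proof collapses to the three-line chain of inequalities above; the hardest work has already been front-loaded into Proposition \ref{prop-0}, whose Lyapunov argument must absorb the $\delta_i$ perturbations via the smallness of $\epsilon$.
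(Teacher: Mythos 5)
Your proposal is correct and matches the paper's intended argument exactly: the paper itself remarks after Proposition~\ref{propo1} that Theorem~\ref{theo2} ``immediately follows'' from Propositions~\ref{prop-0} and~\ref{propo1}, and your three-line chain—upper bound $\Phi(t)\leq S_2\Psi_1(t)$, decay from Proposition~\ref{prop-0}, and lower bound at $t=0$ giving $\Psi_1(0)\leq S_1^{-1}\Phi(0)$—is precisely that deduction, together with the correct use of Theorem~\ref{thm-karniadakis-bkst} to secure the $\epsilon$-bounds on $\delta_1,\delta_2,\delta_3,\delta_{3x}$ that Proposition~\ref{prop-0} requires. Your observation about needing $S_1,S_2$ to be finite and bounded uniformly over the admissible $\lambda$-class is apt and is indeed the content established inside the proof of Proposition~\ref{propo1} via Lemmas~\ref{lem1}--\ref{lem2} and~\ref{lemma3}--\ref{lemma4}.
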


\section{Simulation of the Stability results under the NO approximated gain kernels}\label{Simul-kernel}

Our simulation is for a reaction-diffusion PDE with a boundary input delay $D = 2$ and a spatially varying reactivity coefficient $\lambda(x) = 10.2+$ $2\cos(\Gamma \cos^{-1}(x))$ parameterized by $\Gamma=[5, 10]$, which renders the PDE unstable for all $\Gamma$s in that range.
The closed-loop system with the nominal, delay-uncompensated backstepping controller is unstable as shown in Figure \ref{closed-loop-uncom} {with the spatial step $\Delta x=0.005$ and  the temporal step   $\Delta t=0.0001$}. 
Additionally,  the plots of the reaction terms depicted in Figure \ref{lambda-x} show that the intensity of the oscillations of $\lambda$ is an increasing function of $\Gamma$, leading to a lower rate of instability, as indicated on the right side of Figure \ref{lambda-x}.
\begin{figure}
\centering
\includegraphics[width=0.485\textwidth]{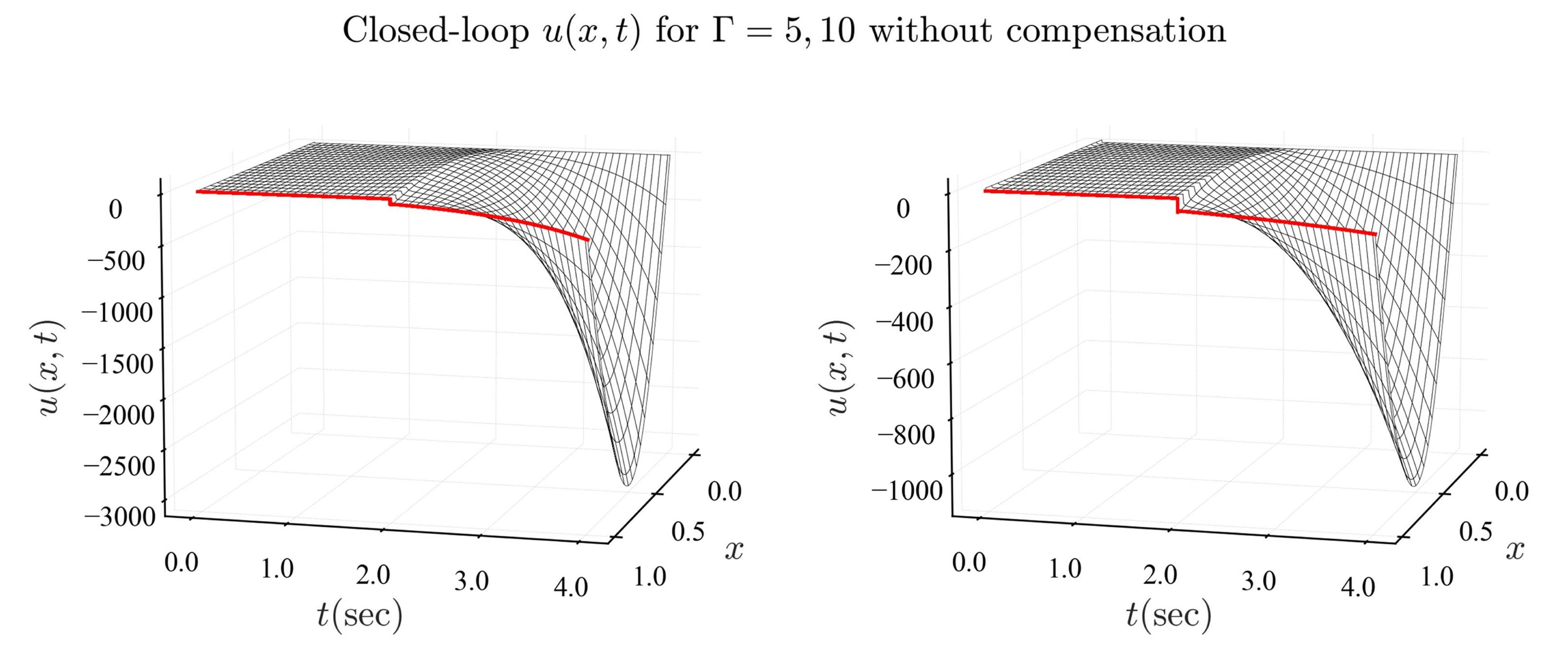}
\vspace{-0.21cm}
\caption{{Instability of the closed-loop system with  reaction coefficients defined as $\lambda(x)=10.2+2\cos(\Gamma\cos(x)^{-1})$, $\Gamma=5,\ 10$, respectively, under the control law of the delay-free plant \cite{krstic2023neural} for a constant initial condition  $u_0(x) =10$.}} \label{closed-loop-uncom}
\end{figure}
\begin{figure}
\centering
\includegraphics[width=0.485\textwidth]{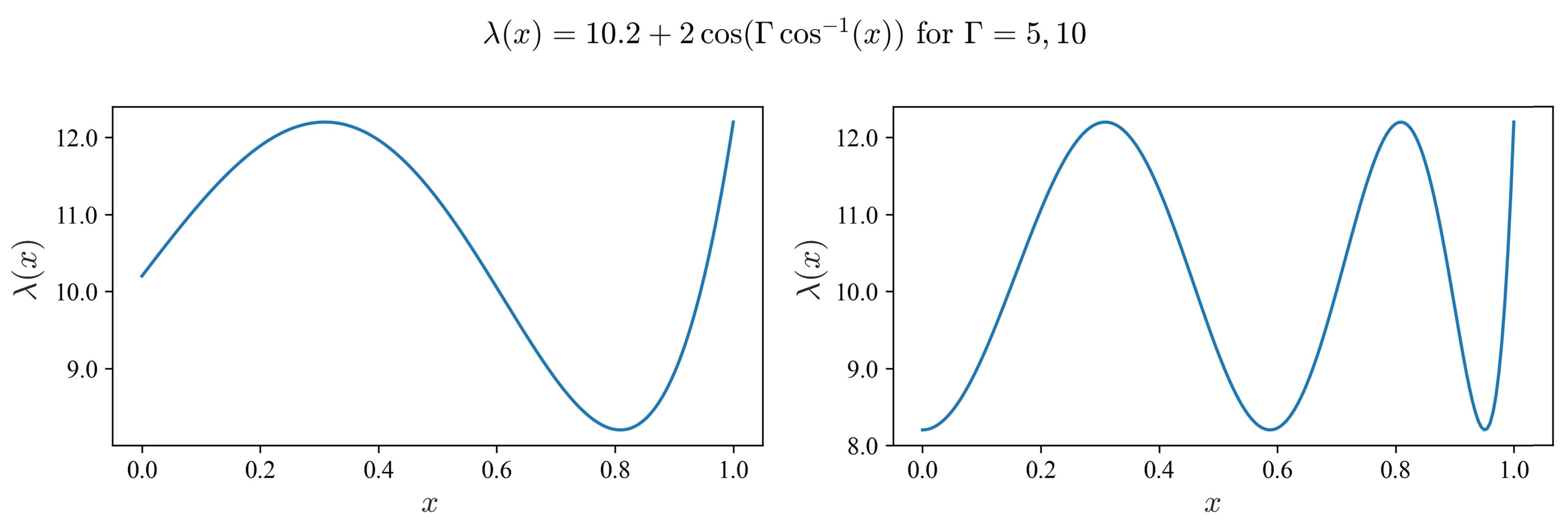}
\vspace{-0.21cm}
\caption{The reaction coefficients $\lambda(x)=10.2+2\cos(\gamma\cos(x)^{-1})$ with $\gamma=5,\  10$, respectively.} \label{lambda-x}
\end{figure}
As opposed to \cite{krstic2023neural}, we consider learning the two kernels $k(x,y)$ and $\gamma(x,y)$, which can be achieved in two steps by defining the mappings $\mathcal{K}_1:\lambda \mapsto k$ and  $\mathcal{K}_2: (\lambda,\  k(1,\cdot))\mapsto \gamma$. The first learning step has been performed in \cite{krstic2023neural} but, for the second step, a DeepONet with two inputs and one output is required. The training of our neural network on the approximate kernel $\gamma(x,y)$ is accomplished by passing the training of two datasets of different $\lambda(x)$ and $k(1,y)$ with a given range of the parameter $\Gamma$, namely, $\Gamma \in [5,10]$, uniformly distributed through the network.

\begin{table}[t]
\centering
\resizebox{\columnwidth}{!}{
\begin{tabular}{|c|c|c|c|c|c|}
  \hline
 \begin{tabular}[c]{@{}c@{}}Data Sets \\ (train/test)\end{tabular} & \begin{tabular}[c]{@{}c@{}}Training\\ Time\\(sec)\end{tabular} & \begin{tabular}[c]{@{}c@{}}Average\\ Relative\\ Error \\Training \\Data\end{tabular} & \begin{tabular}[c]{@{}c@{}}Average\\ Relative\\ Error \\Testing \\Data\end{tabular} & \begin{tabular}[c]{@{}c@{}}Average\\
Calculation\\Time CPU\\ (sec)\end{tabular} &\begin{tabular}[c]{@{}c@{}}Average\\
Calculation\\Time GPU\\ (sec)\end{tabular} \\
  \hline
   \begin{tabular}[c]{@{}c@{}}500\\(450/50)\end{tabular} & \textcolor{blue}{325.76} & $1.02\times 10^{-2}$ & $1.05\times 10^{-2}$\ & \textcolor{blue}{$1.09$} & $6.75\times 10^{-3}$\\
  \hline
   \begin{tabular}[c]{@{}c@{}}1000\\(900/100)\end{tabular} & {609.84} &  $7.72\times 10^{-4}$ & $7.86\times 10^{-4}$ & $1.74$ &$3.87\times 10^{-3}$\\
  \hline
     \begin{tabular}[c]{@{}c@{}}2000\\(1800/200)\end{tabular} & 1201.61 & \textcolor{blue}{$2.16\times 10^{-4}$} & \textcolor{blue}{$2.44\times 10^{-4}$} & {$2.34$} &\textcolor{blue}{$2.03\times 10^{-3}$}\\
\hline
\end{tabular}}
\caption{The relationship between different sizes of datasets and the accuracy of NO approximated $k$ under the parameter size 76169285. The best values are highlighted in blue.}
\label{table-k}
\end{table}

\begin{table}[t]
\centering
\resizebox{\columnwidth}{!}{
\begin{tabular}{|c|c|c|c|c|c|}
  \hline
 \begin{tabular}[c]{@{}c@{}}Data Sets \\ (train/test)\end{tabular} & \begin{tabular}[c]{@{}c@{}}Training\\ Time\\(sec)\end{tabular} & \begin{tabular}[c]{@{}c@{}}Average\\ Relative\\ Error \\Training \\Data\end{tabular} & \begin{tabular}[c]{@{}c@{}}Average\\ Relative\\ Error \\Testing \\Data\end{tabular} & \begin{tabular}[c]{@{}c@{}}Average\\
Calculation\\Time CPU\\ (sec)\end{tabular} &\begin{tabular}[c]{@{}c@{}}Average\\
Calculation\\Time GPU\\ (sec)\end{tabular} \\
  \hline
   \begin{tabular}[c]{@{}c@{}}500\\(450/50)\end{tabular} & \textcolor{blue}{323.52} & $9.94\times 10^{-3}$  & $1.09\times 10^{-2}$ & \textcolor{blue}{$1.41$} & {$7.97\times 10^{-3}$}\\
  \hline
   \begin{tabular}[c]{@{}c@{}}1000\\(900/100)\end{tabular} & {628.61} &  $2.78\times 10^{-3}$ & $2.80\times 10^{-3}$ & $1.50$ &$3.82\times 10^{-3}$\\
  \hline
     \begin{tabular}[c]{@{}c@{}}2000\\(1800/200)\end{tabular} & 1235.74 & \textcolor{blue}{$9.92\times 10^{-4}$} & \textcolor{blue}{$1.01\times 10^{-3}$} & {$2.34$} &\textcolor{blue}{$2.03\times 10^{-3}$}\\
\hline
\end{tabular}}
\caption{The relationship between different sizes of datasets and the accuracy of NO approximated $\gamma$ under the parameter size 76169685. The most accurate approximations are highlighted in blue.}
\label{table-gamma}
\end{table}
\begin{figure}[t]
\centering
\includegraphics[width=0.485\textwidth]{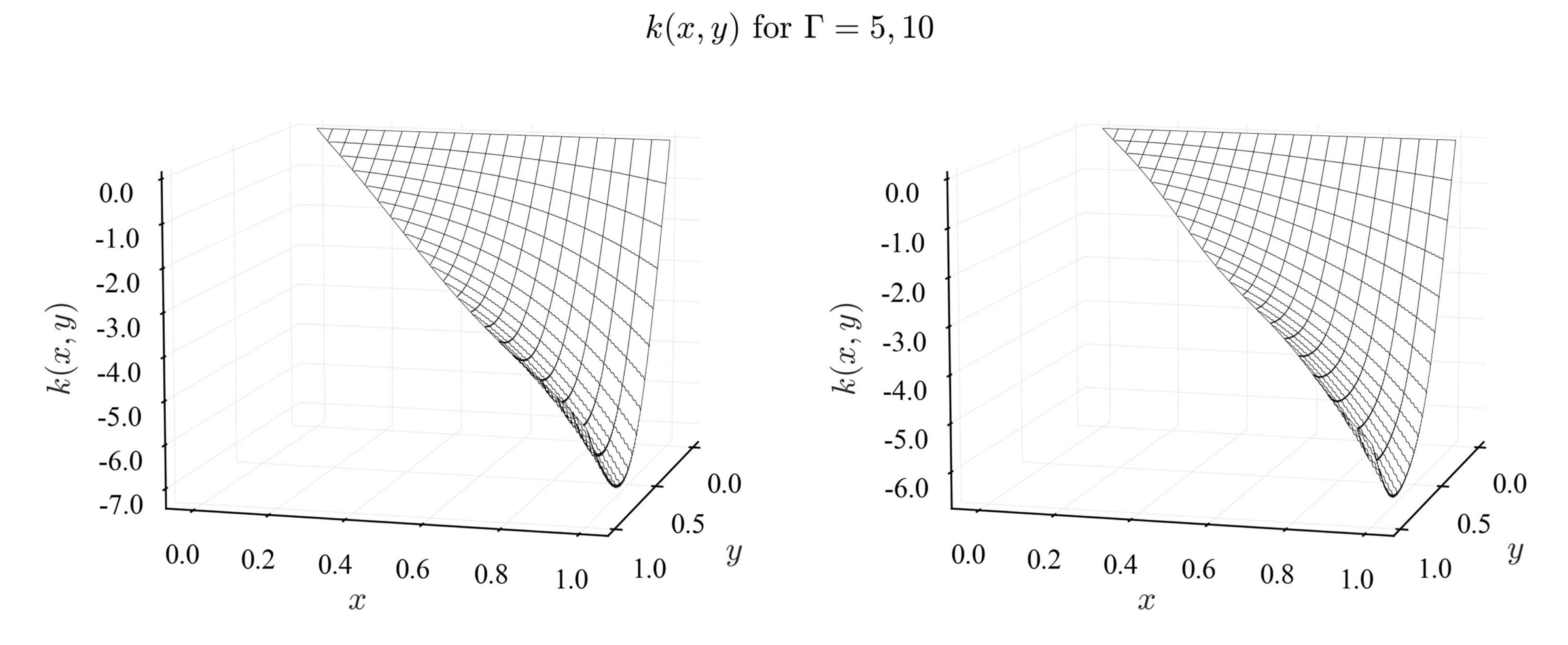}
\includegraphics[width=0.485\textwidth]{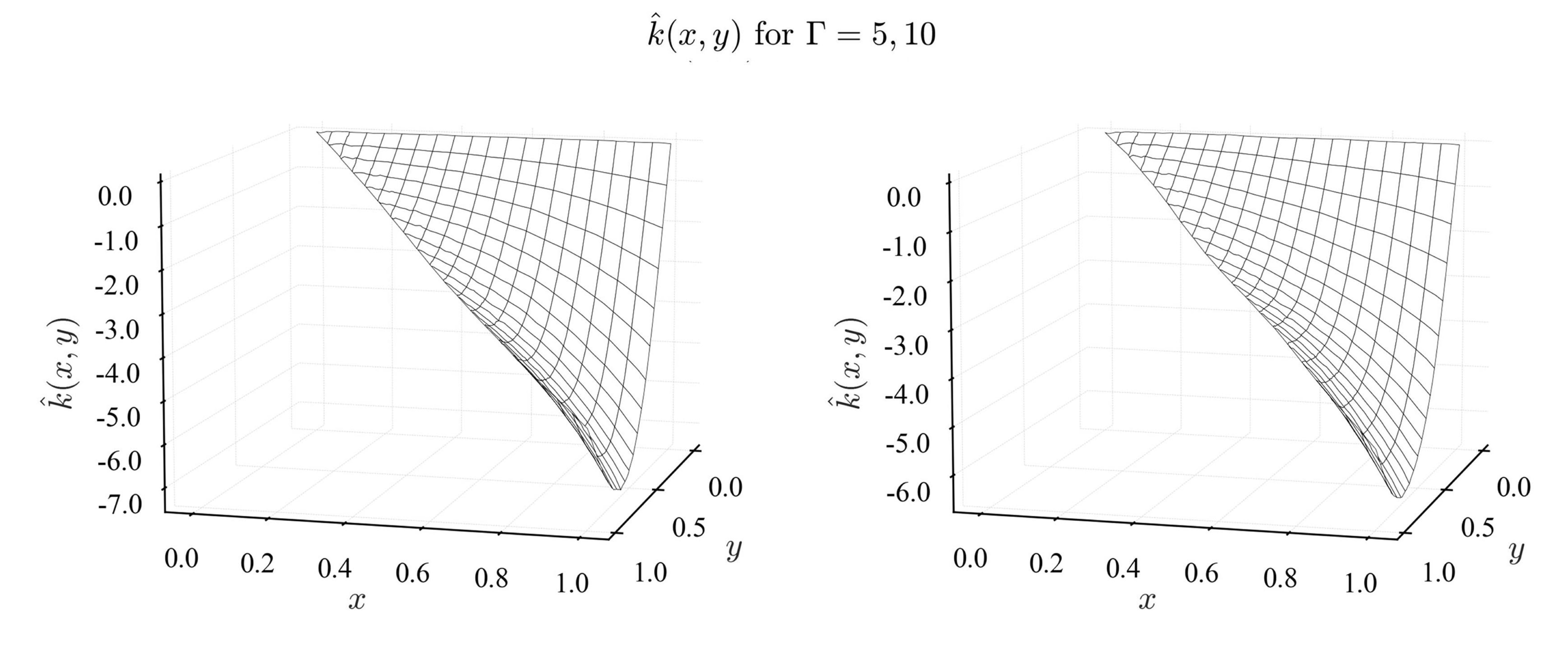}
\includegraphics[width=0.485\textwidth]{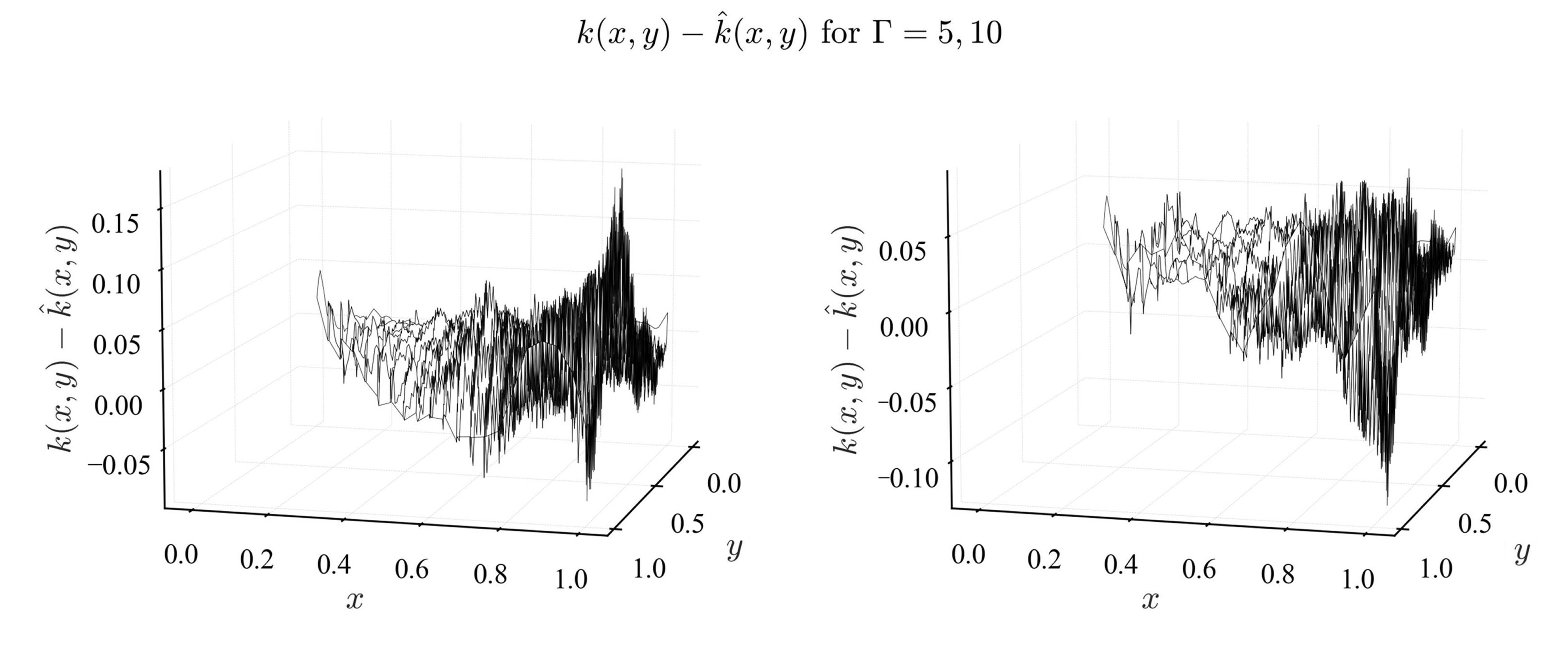}
\caption{The kernels $k(x,y)$, $\hat k(x,y)$ and $k(x,y)-\hat k(x,y)$.} \label{figure-k}
\end{figure}

The DeepONet is efficiently used without an alteration of the grid, by iterating  $\lambda(x)$  along the $y$-axis to generate a 2D input for  $\mathcal{K}_1$ network. For similar reasons,   $\lambda(x),\ k(1,y)$ along the $y$-axis and $x$-axis, respectively, are to create two 2D input for $\mathcal{K}_2$ network. Our approach leverages the 2D structure by employing a CNN for the DeepONet branch network. {The relationship between various dataset sizes and the accuracy of the approximate values of $k$ and $\gamma$ is presented in Tables \ref{table-k} and \ref{table-gamma}, respectively. {The stepsizes for $x$ and $y$ are both $0.005$. All experiments are run on an Nvidia Titan Xp (the CPU is Intel I7).} Based on a dataset of size 2000, the model that achieves the utmost precision in classifying data points is obtained. Figures \ref{figure-k} and \ref{figure-gamma} illustrate the analytical and learned DeepONet kernels $k$ and $\gamma$, respectively, for the two $\Gamma$ values depicted in Figure \ref{lambda-x}}. During the training process, see Figure \ref{Train-loss}, the relative $L^2$ errors for kernels $k(x,y)$ and $\gamma(x,y)$ are measured to be $2.16\times 10^{-4}$ and $9.92\times 10^{-4}$, while the testing error amounts to $2.44\times 10^{-4}$ and $1.01\times 10^{-3}$,
respectively. By employing the learned neural operator, we achieve notable speedups, approximately on the order of $10^3$, compared to an efficient finite-difference implementation. To compute the approximate value of the controller signal given by \eqref{equ-hat-U},  the value of $\hat \gamma_y(1-y,1)$ is needed, which can be obtained by using the estimate of $\hat\gamma_y(x,1)=\frac{\hat\gamma(x,1)-\hat\gamma(x,1-\Delta y)}{\Delta y}$. The simulations confirm closed-loop stability under a delay-compensated boundary control with kernel gains approximation as shown in Fig. \ref{figure-u}.

\begin{figure}[t]
\centering
\includegraphics[width=0.485\textwidth]{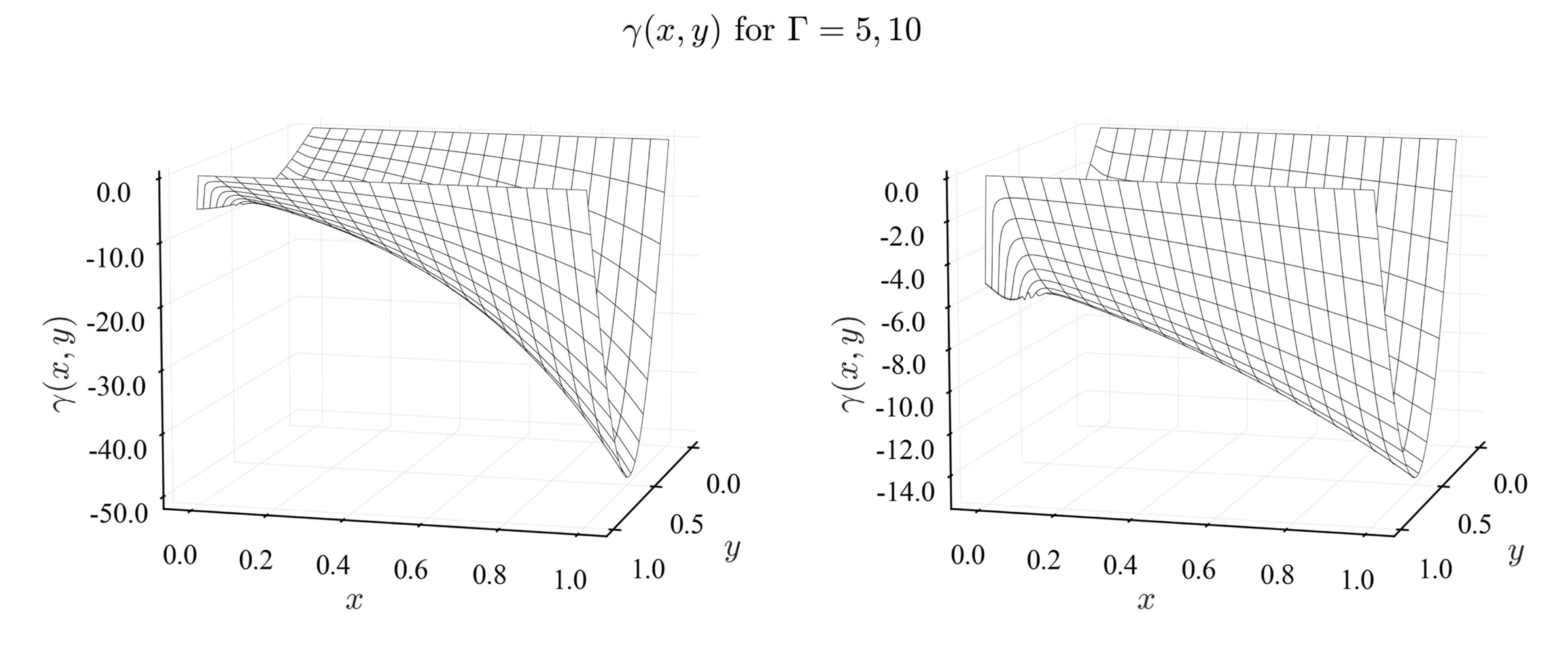}
\includegraphics[width=0.485\textwidth]{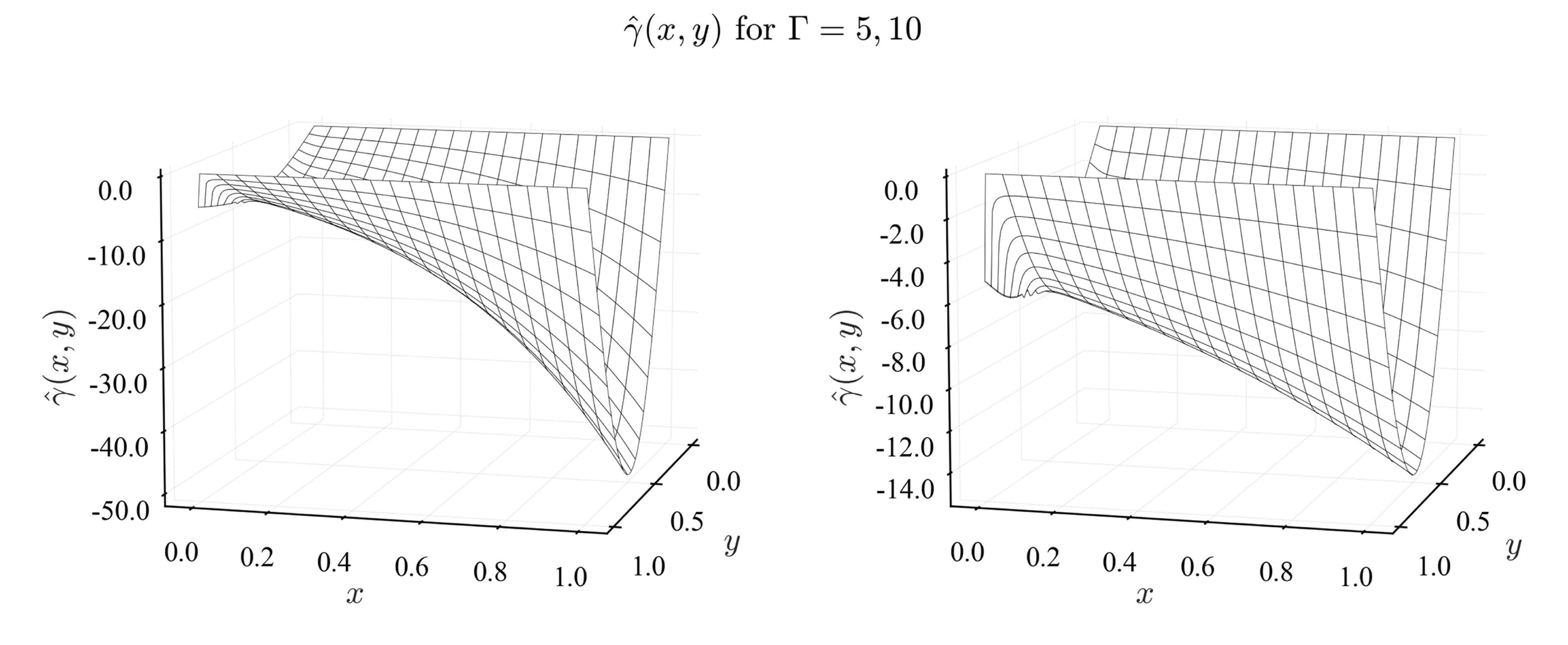}
\includegraphics[width=0.485\textwidth]{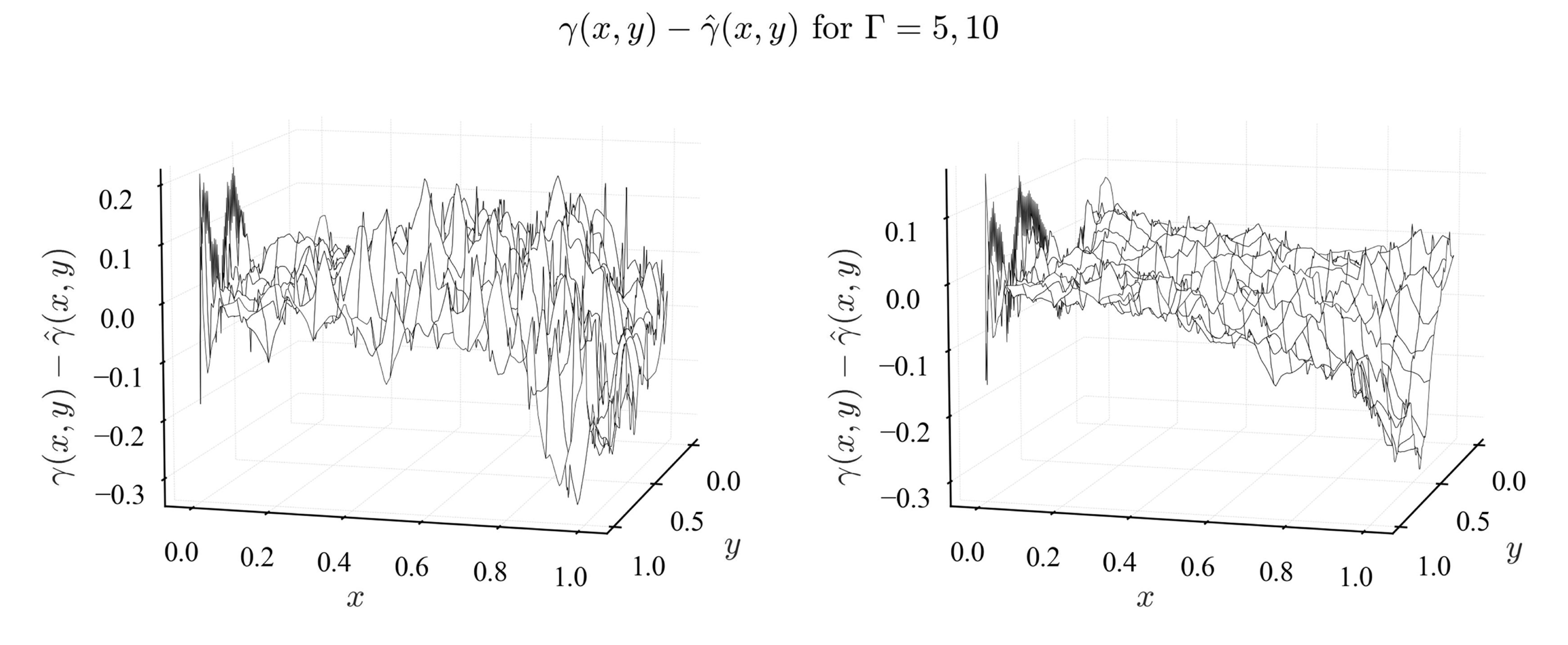}
\caption{The kernels $\gamma(x,y)$, $\hat \gamma(x,y)$ and $\gamma(x,y)-\hat \gamma(x,y)$} 
\label{figure-gamma}
\end{figure}

\begin{figure}[t]
\centering
\includegraphics[width=0.24\textwidth]{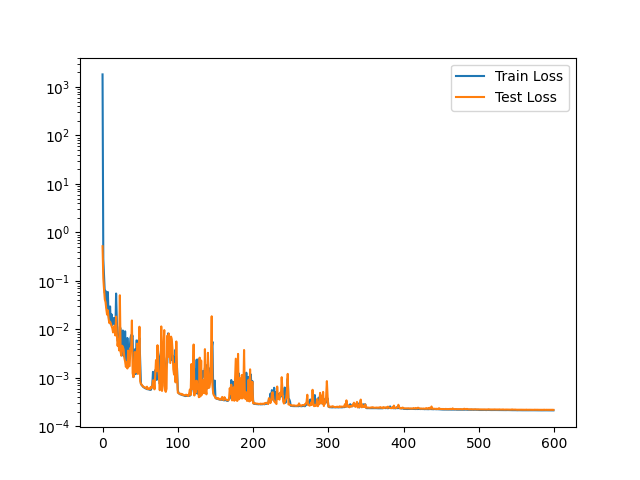}
\includegraphics[width=0.24\textwidth]{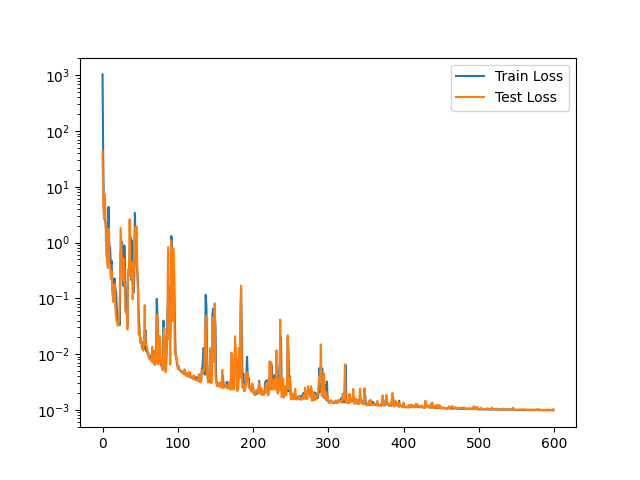}
\vspace{-0.21cm}
\caption{The train and test loss for $\lambda\mapsto k$ and $(\lambda,\ k(1,\cdot))\mapsto \gamma$.} \label{Train-loss}
\end{figure}

\begin{figure}[t]
\centering
\includegraphics[width=0.485\textwidth]{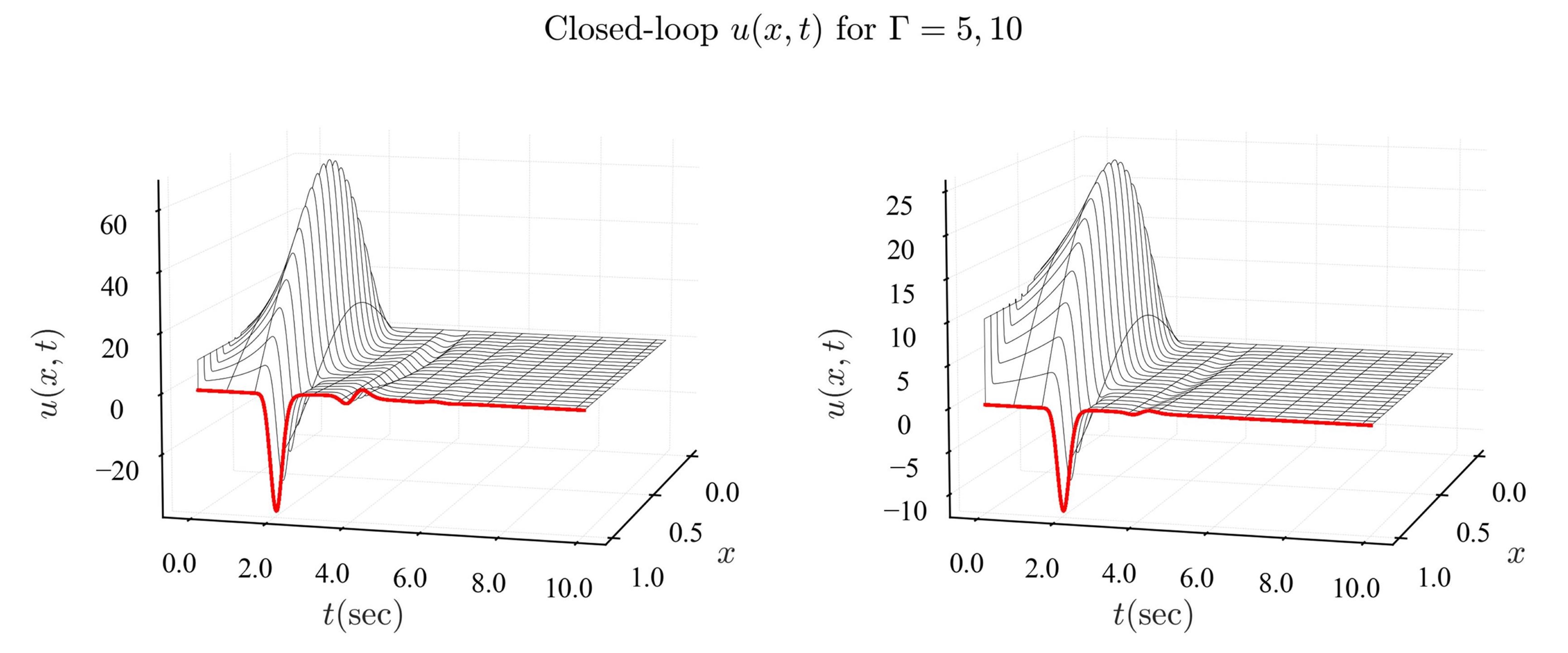}
\includegraphics[width=0.485\textwidth]{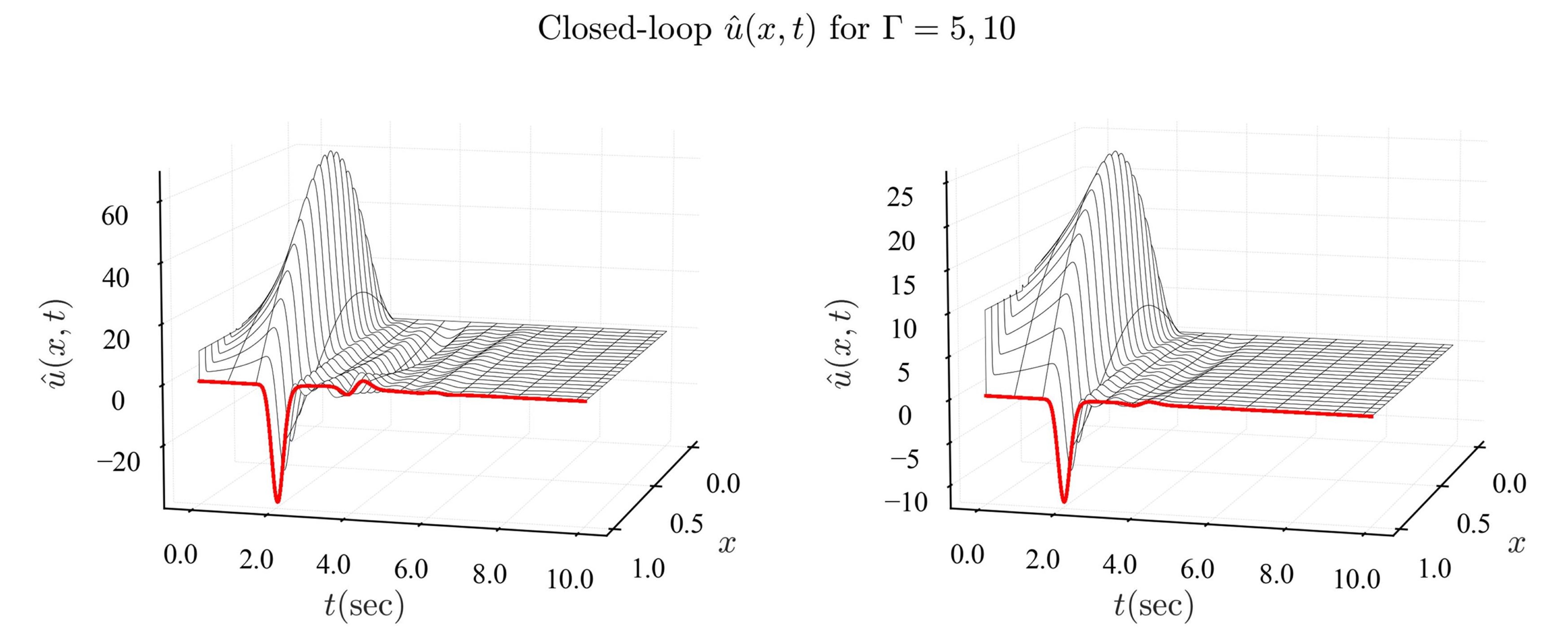}
\vspace{0.21cm}
\includegraphics[width=0.485\textwidth]{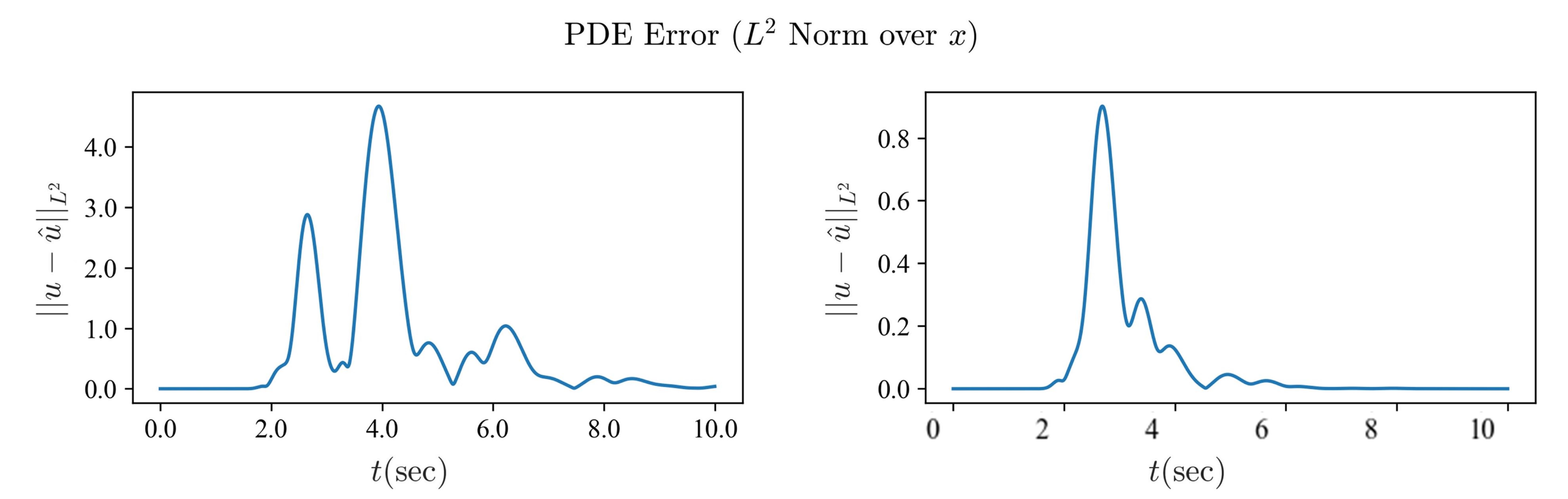}
\caption{For the two respective values of $\lambda(x)$, the top row shows closed-loop solutions with kernels $k(x,y)$ and $\gamma(x,y)$, the middle row shows closed-loop solutions with the learned kernels $k(x,y)$ and $\gamma(x,y)$, and the bottom row shows the closed-loop PDE error between applying the original kernels $k(x,y),\ \gamma(x,y)$ and the learned kernels $k(x,y)$ and $\gamma(x,y)$.}\label{figure-u}
\end{figure}

\section{Concluding Remarks}\label{conclude}

The papers \cite{bhan2023neural}, \cite{krstic2023neural}, and \cite{qi2023neural} introduced DeepONet approximations of PDE backstepping designs with single Goursat-form kernel PDEs for, respectively, hyperbolic, parabolic, and hyperbolic PDEs with input delays. By considering a backstepping design for a reaction-diffusion PDE with input delay in this paper, we demonstrate how to tackle problems where more than one kernel PDE, and belonging to different PDE classes, need to be approximated with a DeepONet, and their stabilizing capability proven using the universal approximation theorem for nonlinear operators \cite{lu2021advectionDeepONet}. 

The structure of the nonlinear operator being approximated is  interesting and instructive. First, the reactivity function generates one backstepping kernel, which is a solution to a second-order parabolic Goursat PDE. Then, the solution of that Goursat PDE serves as an initial condition to another parabolic (reaction-diffusion) PDE, to generate a second (predictor) kernel. Each of the two PDEs, from distinct classes, gives rise to a separate nonlinear operator and, taken as a composition of operators, they generate a single nonlinear operator, from the reactivity function to the control gain functions. We approximate this overall nonlinear operator to generate the control input. 

Thus, the extension of DeepONet-based PDE backstepping to compositions of kernel PDE-based nonlinear operators is the paper's main technical contribution. The paper's control contributions are the proofs of stabilization under the approximated kernels.

\appendices 

\section{Proof of Lemma \ref{lem2}}\label{appendix-A}

Let's define the $L^2$ norm of function $f(x,t)\in L^2[0,1]$ as follows:
\begin{align}
\rVert f(t)\rVert_{L^2}^2=\int_0^1|f(x,t)|^2\mathrm dx.
\end{align}

To prove  Lemma \ref{lem2}, we start from the following lemma, which derives explicit bounds on the derivatives of $k(x,y)$, and then, after this lemma, turn our attention to $\gamma(x,y)$ from  Lemma \ref{lem2}. 

\begin{lemm}\label{lemma-k}
{\bf\em [bounds on Goursat kernel's derivatives]}
{For the function $k(x,y)$  satisfying the PDE system \eqref{equ-kxx}--\eqref{equ-k0},
the following holds:
\begin{align}
|k_{y}(x,y)|^2\leq &\frac{5\bar\lambda^2}{8}+{\bar\lambda^4} {\rm e}^{4\bar \lambda},\label{equ-est-ky}\\
\nonumber|k_{yy}(x,y)|^2\leq&\frac{3(32\bar \lambda'^{2}+31\bar \lambda^{4})}{256}\\
&+\frac{3\bar \lambda^2(32\bar\lambda^2+ 26\bar \lambda'^2+23\bar \lambda^4)e^{4\bar\lambda }}{128}, \label{equ-est-kyy}\\
\nonumber|k_{yyy}(x,y)|^2\leq& \frac{12\lambda''^2+73\bar\lambda'^2\bar\lambda^2+218\bar\lambda^4+20\bar \lambda^{6}}{64}\\
\nonumber&+\frac{\bar \lambda^2 {\rm e}^{4\bar \lambda }(36\bar \lambda'^{2}(1+\bar\lambda^2)+68\bar\lambda^4+19\bar \lambda''^2)}{16}\\
&+\frac{\bar \lambda^4e^{4\bar\lambda }(6\bar \lambda'^{2}+119\bar \lambda^{4})}{128}.\label{equ-est-kyyy}
\end{align}}
\end{lemm}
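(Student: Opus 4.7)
The plan is to obtain \eqref{equ-est-ky}--\eqref{equ-est-kyyy} by successively differentiating the Goursat representation of \eqref{equ-kxx}--\eqref{equ-k0} and bootstrapping from Lemma \ref{lem1}. First, introducing the characteristic coordinates $\xi = x+y$ and $\eta = x-y$, the Goursat PDE transforms into $k_{\xi\eta} = \tfrac{1}{4}\lambda((\xi-\eta)/2)\,k$ with $k|_{\eta=0} = -\tfrac{1}{2}\int_0^{\xi/2}\lambda(s)\,ds$ and $k|_{\xi=\eta}=0$. Integrating along both characteristics yields the standard Volterra integral equation $k = k_0 + T[k]$ used to derive Lemma \ref{lem1}, which supplies the pointwise base-case bound $|k(x,y)|\leq \bar\lambda\, e^{2\bar\lambda x}$.

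To bound $k_y$, I would differentiate this integral equation once in $y$. The differentiation produces terms containing $\lambda$ and $\lambda'$ acting on the already-bounded $k$; a Gronwall-type estimate on the resulting Volterra-type equation for $k_y$ then yields a bound for $|k_y|^2$ of the form $C_1(\bar\lambda,\bar\lambda')+C_2(\bar\lambda,\bar\lambda')e^{4\bar\lambda}$, matching the structure of \eqref{equ-est-ky}. For $k_{yy}$ I would exploit the PDE \eqref{equ-kxx} directly and write $k_{yy} = k_{xx} - \lambda(y)k$, differentiate the integral equation in $x$ twice to control $k_{xx}$, and combine the two bounds. For $k_{yyy}$ the same strategy applies with one further $y$-derivative, which causes $\lambda''$ to enter via Leibniz's rule and produces the $\bar\lambda''^2$ terms in \eqref{equ-est-kyyy}. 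At every step, Young's inequality is used to split cross products such as $\lambda k_{yy}$ or $\lambda' k_y$ into the clean squared quantities appearing on the right-hand sides of \eqref{equ-est-kyy}--\eqref{equ-est-kyyy}.

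The main technical obstacle is the precise constant bookkeeping required to recover the specific numerical coefficients stated in \eqref{equ-est-ky}--\eqref{equ-est-kyyy}. Each differentiation produces Leibniz-rule contributions from $\lambda(y)k(x,y)$ that mix powers of $\bar\lambda$ with $\bar\lambda'$ and $\bar\lambda''$, and the Young's inequality splits must be calibrated so that the aggregate estimate collapses into the displayed polynomial form rather than a looser generic bound. A secondary subtlety is verifying that $k$ has the $C^3$ regularity in $y$ that justifies these manipulations; this follows by bootstrapping through the PDE, using that $\lambda'$ and $\lambda''$ are Lipschitz (as assumed in Theorem \ref{theo2}) even though Lemma \ref{lem1} itself only asserts $C^2$ regularity of $k$.
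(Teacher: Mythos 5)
Your overall strategy coincides with the paper's: pass to the characteristic coordinates $\xi=x+y$, $\eta=x-y$ (the paper has a typo writing both as $x-y$), set $G(\xi,\eta)=k(x,y)$, represent $G$ as a Volterra integral equation, differentiate it, and bootstrap from the Lemma~\ref{lem1} bound $|k|\leq\bar\lambda e^{2\bar\lambda x}$. For $k_y$ your route is identical to the paper's, which writes $k_y=G_\xi-G_\eta$ and bounds $G_\xi^2$, $G_\eta^2$ from the differentiated integral representation. Where you deviate is for $k_{yy}$ and $k_{yyy}$: you propose routing through the PDE as $k_{yy}=k_{xx}-\lambda k$ (and once more for $k_{yyy}$), whereas the paper expands $k_{yy}=G_{\xi\xi}-2G_{\xi\eta}+G_{\eta\eta}$ and $k_{yyy}=G_{\xi\xi\xi}-3G_{\xi\xi\eta}+3G_{\xi\eta\eta}-G_{\eta\eta\eta}$ directly and bounds each $G$-derivative. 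Your detour through $k_{xx}$ is sound but buys nothing: since $k_{xx}=G_{\xi\xi}+2G_{\xi\eta}+G_{\eta\eta}$, controlling $k_{xx}$ requires exactly the same second-order $G$-estimates, and the extra Young splits on the $\lambda k$ (and, at the next order, $\lambda' k$, $\lambda k_y$, $k_{xxy}$) terms inflate the coefficients, so the specific constants in \eqref{equ-est-ky}--\eqref{equ-est-kyyy} would not come out. You correctly flag the constant-bookkeeping issue, but as written the proposal is a feasibility sketch rather than a derivation of the stated bounds; to match the paper you must bound the individual $G_{\xi\xi}$, $G_{\xi\eta}$, $G_{\eta\eta}$ and their $\xi,\eta$-derivatives and assemble the exact linear combinations rather than going through $k_{xx}$.
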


\begin{proof}\rm
Let us start by defining 
\begin{align}
G(\xi,\eta)=k(x,y),\label{equ-G-k}
\end{align}
where $\xi = x-y,\ \eta = x-y$. Then, 
\begin{align}
k_{y}(x,y)=G_\xi(\xi,\eta)-G_\eta(\xi,\eta),\label{equ-ky}
\end{align}
where
\begin{align}
G_\xi(\xi,\eta)=&-\frac{1}{4}\lambda\left(\frac{\xi}{2}\right)+\frac{1}{4}\int_0^\eta \lambda\left(\frac{\xi-s}{2}\right) G(\xi,s)\mathrm ds,\\
 \nonumber G_\eta(\xi,\eta)=&\frac{1}{4}\lambda\left(\frac{\eta}{2}\right)-\frac{1}{4}\int_0^\eta \lambda\left(\frac{\eta-s}{2}\right) G(\eta,s)\mathrm ds\\
&+\frac{1}{4}\int_\eta^\xi\lambda\left(\frac{\tau-\eta}{2}\right) G(\tau,\eta)\mathrm d\tau,
\end{align}
satisfy the following estimates 
\begin{align}
\nonumber&G_\xi(\xi,\eta)^2\\
\nonumber=&\bigg(-\frac{1}{4}\lambda\left(\frac{\xi}{2}\right)+\frac{1}{4}\int_0^\eta \lambda\left(\frac{\xi-s}{2}\right) G(\xi,s)\mathrm ds\bigg)^2\\
\nonumber\leq&\frac{\bar\lambda^2}{8}+\frac{\bar \lambda^2}{8}\int_0^\eta  G(\xi,s)^2\mathrm ds\\
\leq&\frac{\bar\lambda^2}{8}(1+ \bar \lambda ^2e^{4\bar \lambda}),\label{eq36}\\
 \nonumber& G_\eta(\xi,\eta)^{2}\\
 \nonumber=&\bigg(\frac{1}{4}\lambda\left(\frac{\eta}{2}\right)-\frac{1}{4}\int_0^\eta \lambda\left(\frac{\eta-s}{2}\right) G(\eta,s)\mathrm ds\\
\nonumber&+\frac{1}{4}\int_\eta^\xi\lambda\left(\frac{\tau-\eta}{2}\right) G(\tau,\eta)\mathrm d\tau\bigg)^2\\
\leq&\frac{3\bar\lambda^2}{16}+\frac{3\bar\lambda^2}{16}\int_0^\eta G(\eta,s)^{2}\mathrm ds+\frac{3\bar\lambda^2}{16}\int_\xi^\eta G(\tau,\eta)^2\mathrm d\tau\nonumber\\
\leq&\frac{3\bar\lambda^2}{16}(1+ 2\bar \lambda ^2e^{4\bar \lambda}).\label{eq37}
\end{align}
To arrive at \eqref{eq36}, \eqref{eq37}, we use the fact that $G(\xi,\eta)=k(x,y)$ and $|k(x,y)|\leq \bar \lambda {\rm e}^{2\bar \lambda x}$. Hence, based on \eqref{equ-ky}, \eqref{eq36}, and  \eqref{eq37}, one can get  
\begin{align}
\nonumber k_y(x,y)^2\leq& 2G_\xi(\xi,\eta)^2+2G_\eta(\xi,\eta)^2\\
\leq&\frac{5\bar\lambda^2}{8}+{\bar\lambda^4} {\rm e}^{4\bar \lambda},
\end{align}
is bounded. 

The next step is to find an upper bound of $k_{yy}(x,y)$. 
Knowing that 
\begin{align}
k_{yy}(x,y)=G_{\xi\xi}(\xi,\eta)-2G_{\xi\eta}(\xi,\eta)+G_{\eta\eta}(\xi,\eta),
\label{equ-kyy}
\end{align}
where
\begin{align}
G_{\eta\xi}(\xi,\eta)=&-\frac{1}{4}\lambda\left(\frac{\xi-\eta}{2}\right) G(\xi,\eta),\\
\nonumber G_{\xi\xi}(\xi,\eta)=&-\frac{1}{8}\lambda'\left(\frac{\xi}{2}\right)+\frac{1}{8}\int_0^\eta \lambda'\left(\frac{\xi-s}{2}\right) G(\xi,s)\mathrm ds\\
&+\frac{1}{4}\int_0^\eta \lambda\left(\frac{\xi-s}{2}\right) G_\xi(\xi,s)\mathrm ds,\\
 \nonumber G_{\eta\eta}(\xi,\eta)=&\frac{1}{8}\lambda'\left(\frac{\eta}{2}\right)-\frac{1}{8}\int_0^\eta \lambda'\left(\frac{\eta-s}{2}\right) G(\eta,s)\mathrm ds\\
\nonumber&-\frac{1}{4}\int_0^\eta \lambda\left(\frac{\eta-s}{2}\right) G_{\eta}(\eta,s)\mathrm ds\\
\nonumber&-\frac{1}{8}\int_\eta^\xi\lambda'\left(\frac{\tau-\eta}{2}\right) G(\tau,\eta)\mathrm d\tau\\
&+\frac{1}{4}\int_\eta^\xi\lambda \left(\frac{\tau-\eta}{2}\right) G_\eta(\tau,\eta)\mathrm d\tau,
\end{align}
the following estimates 
\begin{align}
G_{\eta\xi}(\xi,\eta)^{2}\leq&\frac{\bar \lambda^4 {\rm e}^{4\bar\lambda }}{16},\label{equ-G-etaxi}\\
G_{\xi\xi}(\xi,\eta)^{2}\leq&\frac{3(2\bar \lambda'^{2}+\bar \lambda^{4})}{128}+\frac{3\bar \lambda^2e^{4\bar\lambda }(2\bar \lambda'^{2}+\bar \lambda^{4})}{128},\label{equ-G-xixi}\\
 G_{\eta\eta}(\xi,\eta)^{2}\leq&\frac{5(4\bar \lambda'^{2}+5\bar\lambda^4)}{256}+\frac{5\bar \lambda^2e^{4\bar\lambda }(\bar \lambda'^2+\bar \lambda^4)}{32},\label{equ-G-etaeta}
\end{align}
can be deduced. Thus,  {from \eqref{equ-kyy} and \eqref{equ-G-etaxi}--\eqref{equ-G-etaeta}, } one gets that 
\begin{align}
|k_{yy}(x,y)|^2\leq 12G_{\eta\xi}(\xi,\eta)^{2}+3G_{\xi\xi}(\xi,\eta)^{2}+3G_{\eta\eta}(\xi,\eta)^2
\end{align}
is bounded.

The last step is to prove the boundedness of $k_{yy}(1,y)$ and $k_{yyy}(1,y)$.  
From \eqref{equ-kyy}, we derive the following relation
\begin{align}
\nonumber k_{yyy}(x,y)=&G_{\xi\xi\xi}(\xi,\eta)-3G_{\xi\xi\eta}(\xi,\eta)+3G_{\xi\eta\eta}(\xi,\eta)\\
&-G_{\eta\eta\eta}(\xi,\eta),
\end{align}
where
\begin{align}
\nonumber G_{\eta\eta\xi}(\xi,\eta)=&-\frac{1}{8}\lambda'\left(\frac{\xi-\eta}{2}\right) G(\xi,\eta)\\
&+\frac{1}{4}\lambda\left(\frac{\xi-\eta}{2}\right) G_\eta(\xi,\eta),\\
\nonumber G_{\eta\xi\xi}(\xi,\eta)=&\frac{1}{8}\lambda'\left(\frac{\xi-\eta}{2}\right) G(\xi,\eta)\\
&+\frac{1}{4}\lambda\left(\frac{\xi-\eta}{2}\right) G_\xi(\xi,\eta),\\
\nonumber G_{\xi\xi\xi}(\xi,\eta)=&-\frac{1}{16}\lambda''\left(\frac{\xi}{2}\right)+\frac{1}{4}\int_0^\eta \lambda\left(\frac{\xi-s}{2}\right) G_{\xi\xi}(\xi,s)\mathrm ds\\
\nonumber&+\frac{1}{4}\int_0^\eta \lambda'\left(\frac{\xi-s}{2}\right) G_\xi(\xi,s)\mathrm ds\\
&+\frac{1}{16}\int_0^\eta \lambda''\left(\frac{\xi-s}{2}\right) G(\xi,s)\mathrm ds,\\
 \nonumber  G_{\eta\eta\eta}(\xi,\eta)=&\frac{1}{16}\lambda''\left(\frac{\eta}{2}\right)-\frac{1}{16}\int_0^\eta \lambda''\left(\frac{\eta-s}{2}\right) G(\eta,s)\mathrm ds\\
\nonumber&-\frac{1}{4}\int_0^\eta \lambda'\left(\frac{\eta-s}{2}\right) G_\eta(\eta,s)\mathrm ds\\
\nonumber&-\frac{1}{4}\int_0^\eta \lambda\left(\frac{\eta-s}{2}\right) G_{\eta\eta}(\eta,s)\mathrm ds\\
\nonumber&+\frac{1}{8}\int_\eta^\xi \lambda''\left(\frac{\tau-\eta}{2}\right)G(\tau,\eta)\mathrm d\tau\\
\nonumber&-\frac{1}{4}\int_\eta^\xi\lambda'\left(\frac{\tau-\eta}{2}\right) G_\eta(\tau,\eta)\mathrm d\tau\\
\nonumber&+\frac{1}{4}\int_\eta^\xi\lambda\left(\frac{\tau-\eta}{2}\right) G_{\eta\eta}(\tau,\eta)\mathrm d\tau\\
&-\frac{1}{2}\lambda(0) G_\xi(\eta,\eta),
\end{align}
satisfy the following estimates 
\begin{align}
G_{\eta\eta\xi}(\xi,\eta)^2\leq&\frac{ 3\bar\lambda^4+2\bar \lambda^2 {\rm e}^{4\bar \lambda }(2\bar \lambda'^{2}+3\bar\lambda^4)}{128},\\
G_{\eta\xi\xi}(\xi,\eta)^{2}\leq& \frac{\bar\lambda^4+\bar \lambda^2 {\rm e}^{4\bar \lambda }(2\bar \lambda'^{2}+\bar\lambda^4)}{64},\\
\nonumber G_{\xi\xi\xi}(\xi,\eta)^{2}\leq&\frac{\bar \lambda''^2+2\bar\lambda'^2\bar\lambda^2}{64}+\frac{3\bar \lambda^2 (2\bar \lambda'^{2}+\bar \lambda^{4})}{512}\\
\nonumber&+\frac{\bar \lambda^2 {\rm e}^{4\bar \lambda }(\bar \lambda''^2+2\bar\lambda'^2\bar\lambda^2)}{64}\\
&+\frac{3\bar \lambda^4e^{4\bar\lambda }(2\bar \lambda'^{2}+\bar \lambda^{4})}{512},\\
 \nonumber G_{\eta\eta\eta}(\xi,\eta)^2\leq&\frac{\bar \lambda''^{2}+5\bar \lambda'^2\bar\lambda^2+16\bar\lambda^4}{32}+\frac{\bar\lambda^2 (44\bar \lambda'^{2}+37\bar \lambda^{4})}{512}\\
&+\frac{\bar \lambda^2e^{4\bar \lambda}(70\bar \lambda'^2\bar\lambda^2+36\bar\lambda''^{2}  +64\bar\lambda^4+29\bar \lambda^6)}{128}.
\end{align}
Hence, the boundedness of $k_{yyy}(x,y)$ is established through the inequality  
\begin{align}
\nonumber|k_{yyy}(x,y)|^2\leq& 36G_{\eta\eta\xi}(\xi,\eta)^{2}+36G_{\eta\xi\xi}(\xi,\eta)^{2}\\&+4G_{\xi\xi\xi}(\xi,\eta)^{2}+4G_{\eta\eta\eta}(\xi,\eta)^2,
\end{align} 
which completes the proof of  Lemma \ref{lemma-k}.
  \hfill$\blacksquare$
\end{proof}

\bigskip
\noindent{\bf Proof of Lemma \ref{lem2}.}
Next, we turn our attention to $\gamma(x,y)$. We first recall from \cite[Theorem 4.1]{karafyllis2018input} that, for every $k(1,\cdot)\in C^2([0,1]) \subset L^2(0,1)$, there exists a unique mapping $\gamma \in C^0([0,1];L^2(0,1))$, with $\gamma\in C^1((0,1]\times[0,1])$, satisfying $\gamma[x] \in C^2([0,1])$ for all $x\in(0,1]$, and \eqref{equ-gammax}--\eqref{equ-gamma0}.  

As the next step, we prove the boundedness of the gain kernel $\gamma(x,y)$. First, we use Agmon's inequality to obtain that the kernel $\gamma(x,y)$ satisfies the following estimate,
\begin{align}
|\gamma(x,y)|^2\leq& 2\int_0^1\gamma(x,y)\gamma_y(x,y)\mathrm dy\nonumber\\
\leq& \int_0^1\gamma(x,y)^2\mathrm dy+\int_0^1\gamma_y(x,y)^2\mathrm dy.\label{equ-gamma-bound}
\end{align}
With the inequalities
\begin{align}
\nonumber&\frac{\mathrm d}{\mathrm dx}\frac{1}{2}\int_0^1 \gamma(x,y  )^{2}\mathrm dy\\
\nonumber=&\int_0^1 \gamma(x,y)\gamma_x(x,y)\mathrm dy\\
\nonumber\leq&D \int_0^1 \gamma(x,y)(\gamma_{yy}(x,y)+\lambda(y)\gamma(x,y))\mathrm dy\\
\nonumber\leq&-D \int_0^1 \gamma_{y}(x,y)^2\mathrm dy+D\bar \lambda \int_0^1 \gamma(x,y)^2\mathrm dy\\
\leq&D\bar \lambda \int_0^1 \gamma(x,y)^2\mathrm dy,\label{equ-gamma-proof-0}
\end{align}
derived by using  integration by parts and   the comparison lemma, we arrive at the inequality
\begin{align}
\label{equ-gamma-part1-0}
\int_0^1 \gamma(x,y)^2\mathrm dy\leq {\rm e}^{2D\bar \lambda x}\int_0^1 \gamma(0,y  )^2\mathrm dy.
\end{align}
Knowing that $\gamma(0,y)=k(1,y)$, enables one to get the estimate below
\begin{align}
&\int_0^1 \gamma(x,y  )^2\mathrm dy\leq {\rm e}^{2D\bar \lambda x}\int_0^1k(1,y)^2\mathrm dy.\label{equ-gamma-part1}
\end{align}
Along the same arguments, the following holds
\begin{align}
\nonumber&\frac{\mathrm d}{\mathrm dx}\frac{1}{2}\int_0^1 \gamma_y(x,y)^{2}\mathrm dy\\
\nonumber=&\int_0^1 \gamma_{y}(x,y)\gamma_{xy}(x,y)\mathrm dy\\
\nonumber\leq&-D \int_0^1 \gamma_{yy}(x,y)(\gamma_{yy}(x,y)+\lambda(y)\gamma(x,y))\mathrm dy\\
\leq&D\bar \lambda \int_0^1 \gamma_{y}(x,y)^2\mathrm dy,
\end{align}
 and leads to the following inequality
\begin{align}
\int_0^1 \gamma_{y}(x,y  )^2\mathrm dy\leq {\rm e}^{2D\bar \lambda x}\int_0^1 \gamma_{y}(0,y  )^2\mathrm dy.
\end{align}
From \eqref{equ-gamma0}, it is obvious that $\gamma_{y}(0,y  )=k_{y}(1,y)$ and therefore
\begin{align}
\int_0^1 \gamma_{y}(x,y  )^2\mathrm dy\leq {\rm e}^{2D\bar \lambda x}\int_0^1k_{y}(1,y)^2\mathrm dy.\label{equ-gamma-part2}
\end{align}
Substituting  \eqref{equ-gamma-part1} and \eqref{equ-gamma-part2} into \eqref{equ-gamma-bound}, one can deduce that
\begin{align}\label{gamma-bound}
|\gamma(x,y)|^2\leq& 2\int_0^1\gamma(x,y)\gamma_y(x,y)\mathrm dy\nonumber\\
\leq& {\rm e}^{2D\bar \lambda x}\int_0^1k(1,y)^2\mathrm dy+e^{2D\bar \lambda x}\int_0^1k_{y}(1,y)^2\mathrm dy.
\end{align}
The first term of the right-hand side of \eqref{gamma-bound} is bounded due to  the boundedness of $k(x,y)$ (see. \eqref{equ-k-bouded}), which translates into $|k(1,y)|\leq \bar \lambda {\rm e}^{2\bar \lambda}$.  {Meanwhile, based on Lemma \ref{lemma-k}, we get that  $|k_y(x,y)|^2$ is bounded.} Thus, based on \eqref{gamma-bound}, we have proved the boundedness of $\gamma(x,y)$.

Finally, we  prove the boundedness of the gain kernel $q(x)$.
Since  $q(x) = -\gamma_y(x, 1)$ holds,  multiplying the PDE $\gamma_x(x,y)$ $=D \gamma_{yy}(x,y)+D\lambda(y)\gamma(x,y)$ by $2y\gamma_y(x,y)$, we get 
\begin{align}
\nonumber&2y\gamma_x(x,y)\gamma_y(x,y)\\
=&2D y\gamma_y(x,y)\gamma_{yy}(x,y)+2D\lambda(y)y\gamma(x,y)\gamma_y(x,y).\label{equ-eta-2y}
\end{align}
Integrating \eqref{equ-eta-2y} in $y$ we obtain the equality below
\begin{align}
\nonumber&2\int_0^1y\gamma_x(x,y)\gamma_y(x,y)\mathrm dy\\
\nonumber=&D \gamma_y(x,1)^2-D \int_0^1\gamma_y(x,y)^2\mathrm dy\\
&+2D\int_0^1 \lambda(y)y\gamma(x,y)\gamma_y(x,y) \mathrm dy.\label{equ-eta-2x}
\end{align}
Using Young's inequality, \eqref{equ-eta-2x} implies that
\begin{align}
\nonumber\gamma_y(x,1)^2\leq&\bar \lambda\int_0^1\gamma(x,y)^2\mathrm dy+\left(1+\bar \lambda+\frac{1}{D}\right)\int_0^1\gamma_y(x,y)^2\mathrm dy\\
&+\frac{1}{D}\int_0^1\gamma_x(x,y)^2\mathrm dy.\label{equ-etay00}
\end{align}
In order to find an estimate of the last term of the above equation, let us consider the following fact 
\begin{align}
\nonumber&\frac{\mathrm d}{\mathrm dx}\frac{1}{2}\int_0^1 \gamma_{x}(x,y  )^{2}\mathrm dy\\
\nonumber=&\int_0^1 \gamma_{x}(x,y)\gamma_{xx}(x,y)\mathrm dy\\
\nonumber\leq & D \int_0^1 \gamma_{x}(x,y)(\gamma_{xyy}(x,y)+\lambda(y)\gamma_x(x,y))\mathrm dy\\
\nonumber\leq &-D\int_0^1 \gamma_{xy}(x,y)^2\mathrm dy+D\int_0^1 \lambda(y)\gamma_x(x,y)^2\mathrm dy\\
\leq&D \bar \lambda \int_0^1 \gamma_{x}(x,y)^2\mathrm dy,\label{equ-gammax-proof0} 
\end{align}
where we have used integration by parts. Hence,  comparison lemma helps to deduce the following bound
\begin{align}\label{gam1}
\int_0^1 \gamma_{x}(x,y  )^2\mathrm dy\leq {\rm e}^{2D \bar \lambda x}\int_0^1 \gamma_{x}(0,y  )^2\mathrm dy.
\end{align}
From \eqref{equ-gammax} and \eqref{equ-gamma0}, it is obvious that $\gamma_x(0,y)=D\gamma_{yy}(0,y)+D\lambda(y)\gamma(0,y)$ and $\gamma_{yy}(0,y  )=k_{yy}(1,y),$  which enables one to rewrite \eqref{gam1} as 
\begin{align}
\int_0^1 \gamma_{x}(x,y  )^2\mathrm dy
\leq  2 D^2 {\rm e}^{2D\bar \lambda x}\int_0^1 (k_{yy}(1,y)^2+\bar \lambda^2 k(1,y)^2)\mathrm dy.\label{equ-gammax-proof1} 
\end{align}
Thus,  {based on  the estimate of $|k_{yy}(x,y)|^2$ in Lemma \ref{lemma-k},} the  boundedness of $\gamma_{y}(x,1)$  follows from \eqref{equ-etay00},  which implies the boundedness of   $q(x)$.

\section{Proof of Proposition \ref{prop-0}}\label{lemma-3-proof}

To prove Proposition \ref{prop-0}, we first introduce the variable 
\begin{align}
\hat m(x,t)=\hat w(x,t)-xz(0,t),\label{equ-mw}
\end{align}
which leads to the following  scaled target  system with homogeneous boundary conditions
\begin{align}
\nonumber\hat m_t(x,t)=&\hat m_{xx}(x,t)-xz_t(0,t)+\delta_1(x)u(x,t)\\
&+\int_0^x\delta_2(x,y)u(y,t)\mathrm dy,\label{equ-m0}\\
\hat m(0,t)=&0,\\
\hat m(1,t)=&0,\\
D\hat z_t(x,t)=&\hat z_x(x,t)+\int_0^1\delta_3(x,y)u(y,t)\mathrm dy,\\
\hat z(1,t)=&0.\label{equ-z1-new}
\end{align}
With $A_1>0$ to be chosen, let us define the Lyapunov candidate
\begin{align}\label{V}
V_1(t)=\frac{A_1}{2}\int_0^1\hat m(x,t)^2\mathrm dx+\frac{D}{2}\int_0^1(1+x)(\hat z(x,t)+\hat z_x(x,t))\mathrm dx.
\end{align}
Computing the   time derivative of \eqref{V} along \eqref{equ-m0}--\eqref{equ-z1-new} as
\begin{align}
\nonumber\dot V_1(t)=&{A_1}\int_0^1\hat m(x,t)\hat m_t(x,t)\mathrm dx+{D}\int_0^1(1+x)\hat z(x,t)\hat z_t(x,t)\mathrm dx\\
\nonumber&+{D}\int_0^1(1+x)\hat z_x(x,t)\hat z_{xt}(x,t)\mathrm dx\\
\nonumber=&{A_{1}}\int_0^1\hat m(x,t)\bigg[\hat m_{xx}(x,t)-xz_t(0,t)+\delta_1(x)u(x,t)\\
\nonumber&+\int_0^x\delta_2(x,y)u(y,t)\mathrm dy\bigg]\mathrm dx+\int_0^1(1+x)\hat z(x,t)(\hat z_x(x,t)\\
\nonumber&+\int_0^1\delta_3(x,y)u(y,t)\mathrm dy)\mathrm dx+\int_0^1(1+x)\hat z_x(x,t)(\hat z_{xx}(x,t)\\
&+\int_0^1\delta_{3x}(x,y)u(y,t)\mathrm dy)\mathrm dx,
\end{align}
and using integration by parts and Young's inequality with $r_1>0$ yet to be chosen, the following estimate is  obtained:
\begin{align}
\nonumber\dot V_1(t)\leq&-A_{1}\rVert\hat m_{x}(t)\rVert^2+\frac{A_{1}}{D}\left(\frac{1}{2r_1}\rVert\hat m(t)\rVert^{2}+\frac{r_1}{6}\hat z_x(0,t)^2\right)\\
\nonumber&-\frac{1}{2}\hat z(0,t)^2-\frac{1}{2}\rVert\hat z(t)\rVert^2-\frac{1}{2}\hat z_x(0,t)^2-\frac{1}{2}\rVert\hat z_x(t)\rVert^2\\
\nonumber&+A_1\int_0^1\hat m(x,t)\delta_1(x)u(x,t)\mathrm dx\\
\nonumber&+A_1\int_0^1\hat m(x,t)\int_0^x\delta_2(x,y)u(y,t)\mathrm dy\mathrm dx\\
\nonumber&+\left(\int_0^1\delta_3(1,y)u(y,t)\mathrm dy\right)^2\\
\nonumber&-\frac{A_1}{D}\int_0^1\hat m(x,t)x\int_0^1\delta_3(0,y)u(y,t)\mathrm dy\mathrm dx\\
\nonumber&+\int_0^1(1+x)\hat z(x,t)\int_0^1\delta_3(x,y)u(y,t)\mathrm dy\mathrm dx\\
\nonumber&+\int_0^1(1+x)\hat z_x(x,t)\int_0^1\delta_{3x}(x,y)u(y,t)\mathrm dy\mathrm dx\\
\nonumber\leq&-A_1\rVert\hat m_{x}(t)\rVert^2+\frac{A_1}{2Dr_1}\rVert\hat m(t)\rVert^{2}+\frac{A_1r_1}{6D}\hat z_x(0,t)^2\\
\nonumber&-\frac{1}{2}\hat z(0,t)^2-\frac{1}{2}\rVert\hat z(t)\rVert^2-\frac{1}{2}\hat z_x(0,t)^2-\frac{1}{2}\rVert\hat z_x(t)\rVert^2\\
\nonumber&+A_1\epsilon( \rVert\hat m(t)\rVert^{2}+\rVert u(t)\rVert^2)+\epsilon^{2} \rVert u(t)\rVert^{2}\\
\nonumber&+\frac{A_1\epsilon}{2D}(\rVert\hat m(t)\rVert^2+\rVert u(t)\rVert^{2})\\
&+\epsilon(\rVert\hat z(t)\rVert^2+\rVert\hat z_x(t)\rVert^{2}+2\rVert u(t)\rVert^2).
\end{align}
Hence, by virtue of  Poincar\'e inequality, we get that
\begin{align}
\nonumber\dot V_1(t)&\leq-A_1\left(\frac{1}{4}-\frac{1}{2Dr_1}-\epsilon-\frac{\epsilon}{2D}\right)\rVert\hat m(t)\rVert^2-\frac{1}{2}\hat z(0,t)^2\\
\nonumber&-\left(\frac{1}{2}-\epsilon\right)(\rVert\hat z(t)\rVert^2+\rVert\hat z_x(t)\rVert^2)-\left(\frac{1}{2}-\frac{A_1r_1}{6D}\right)\hat z_x(0,t)^2\\
&+\epsilon\left(A_1+\epsilon +2+\frac{A_1}{2D}\right)\rVert u(t)\rVert^{2}.\label{equ-V-dot}
\end{align}
Following \cite{MKrstic2009}, the inverse transformation of the approximate gain kernel  \eqref{approx-trans-w} allows to derive a bound of the norm of the state $u(x,t)$ in \eqref{equ-V-dot} with respect to the norm of the approximate target system's state $\hat w(x,t)$. In other words,
\begin{align}\label{approx-inverse0}
&u(x,t)=\hat w(x,t)+\int_0^x\hat l(x,y)\hat w(y,t)\mathrm dy,
\end{align}
where the inverse kernel $\hat l(x,y)$ and  the kernel $\hat k(x,y)$ 
satisfy  the following equation \cite{Krstic2008book}
\begin{align}
\hat l(x,y)=\hat k(x,y)+\int_y^x\hat k(x,\xi)\hat l(\xi,y)\mathrm d\xi,
\end{align}
and the following conservative bound holds,
\begin{align}
\rVert \hat l\rVert_\infty\leq \rVert \hat k\rVert_\infty {\rm e}^{\rVert \hat k\rVert_\infty}.
\end{align}
Since 
\begin{align}
\rVert k-\hat k\rVert_\infty<\epsilon,
\end{align}
it follows  that 
\begin{align}
\rVert\hat k\rVert_\infty\leq\rVert k\rVert_\infty+\epsilon
\end{align}
and using \eqref{equ-k-bouded}, we obtain the estimate  
\begin{align}
\rVert\hat k\rVert_\infty\leq \bar \lambda {\rm e}^{2\bar \lambda }+\epsilon.
\end{align}
Hence, 
\begin{align}\label{eq88}
&\rVert \hat l\rVert_\infty\leq(\lambda {\rm e}^{2\bar \lambda x}+\epsilon)e^{(\lambda {\rm e}^{2\bar \lambda x}+\epsilon)}.
\end{align}
From \eqref{equ-mw} and \eqref{approx-inverse0}, we deduce  the following relation 
\begin{align}
\nonumber\rVert u(t)\rVert^2 =&\int_0^1\bigg(\hat w(x,t)+\int_0^x\hat l(x,y)\hat w(y,t)\mathrm dy\bigg)^2\mathrm dx\\
\nonumber \leq&2(1+\rVert \hat l\rVert_\infty^{2})\rVert \hat w(t)\rVert ^2\\
\leq&4(1+\rVert \hat l\rVert_\infty^{2})(\rVert \hat m(t)\rVert ^2+z(0,t)^2).\label{equ-uw}
\end{align}
Substituting  \eqref{equ-uw} into \eqref{equ-V-dot} yields  the following bound
\begin{align}\label{equ-V-00}
\nonumber\dot V_1(t)\leq&-\bigg(\frac{A_1}{8}-\frac{A_1}{2Dr_1}-\epsilon^2(1+\rVert \hat l\rVert_\infty^{2})\bigg)\rVert \hat m(t)\rVert ^2\\
\nonumber&-\bigg(\frac{A_1}{8}-\epsilon\bigg(4(A_1+2+\frac{A_1}{2D})(1+\rVert \hat l\rVert_\infty^{2})\\
\nonumber&+A_1+\frac{A_1}{2D}\bigg)\bigg)\rVert \hat m(t)\rVert ^2-\left(\frac{1}{2}-\epsilon\right)\rVert\hat z(t)\rVert^2\\
\nonumber&-\left(\frac{1}{4}-4\epsilon^{2}(1+\rVert \hat l\rVert_\infty^{2})\right)z(0,t)^2\\
\nonumber&-\left(\frac{1}{4}-4\epsilon\left(A_1+2+\frac{A_1}{2D}\right)(1+\rVert \hat l\rVert_\infty^{2})\right)z(0,t)^2\\
&-\left(\frac{1}{2}-\epsilon\right)\rVert\hat z_x(t)\rVert^2-\left(\frac{1}{2}-\frac{A_1r_1}{6D}\right)\hat z_x(0,t)^2,
\end{align}
for the time-derivative of the Lyapunov function $V(t)$. Setting  \begin{align}
\nonumber A_1=\frac{3D^2}{8}, \quad r_1 =\frac{8}{D},
\end{align}
one can define $\varepsilon^*(D)$ as
\begin{align}
\nonumber\epsilon^*=&\min\bigg\{\frac{D}{8(2A_1D+4D+{A_1})(1+\rVert \hat l\rVert_\infty^{2})},\ \frac{1}{2},\\
\nonumber&\frac{A_1D}{16(2A_1D+4D+{A_1})(1+\rVert \hat l\rVert_\infty^{2})+4A_1(2D+1))},\\
&\frac{1}{\sqrt{(1+\rVert \hat l\rVert_\infty^{2})}},\ \sqrt{\frac{A_1(Dr_1-4)}{8Dr_1(1+\rVert \hat l\rVert_\infty^{2})}}\bigg\},
\end{align}
such that for all  $\epsilon\in (0,\epsilon^*)$,
\begin{align}
\dot V_{1}(t)\leq-c_{1}V_{1}(t),
\end{align}
where $c_1(D,\varepsilon)$ is defined by
\begin{align}
\nonumber c_{1}=&\min\bigg\{\bigg[\frac{1}{2}-\frac{1}{Dr_1}-2\epsilon\bigg(4\bigg(1+\frac{\epsilon+2}{A_1} +\frac{1}{2D}\bigg)(1+\rVert \hat l\rVert_\infty^{2})\\
&+1+\frac{1}{2D}\bigg)\bigg],\ \frac{1-2\epsilon}{2D}\bigg\},
\end{align}
which leads to the following inequality \begin{align}
V_{1}(t)\leq V_{1}(0)e^{-c_1 t}.\end{align} 
One can readily verify that 
\begin{align}
\frac{1}{A_{1}+D}V_{1}(t)\leq\Psi_1(t)\leq\max\left\{\frac{4}{A_{1}},\ \frac{6}{D}\right\}V_{1}(t).
\end{align}
Therefore, for all {$\epsilon\in (0,\epsilon^*)$}, the  exponential stability bound \eqref{eq-Psi-exp-bound} holds
with 
\begin{align}
c_2(D)=(A_{1}+D)\max\left\{\frac{4}{A_{1}},\ \frac{6}{D}\right\}.
\end{align}

\section{Proof of Proposition \ref{propo1}}\label{appendix-B}

Proposition \ref{propo1} is proven using the following lemmas.

\begin{lemm} \label{lemma3}
For the kernel $\gamma$ that satisfies  the PDE \eqref{equ-gammax}--\eqref{equ-gamma0}, 
 the following estimates hold: 
\begin{align}
\label{equ-lambda-inq1}
&\int_0^1\int_0^1\gamma(x,y  )^2\mathrm dy\mathrm dx\leq\frac{{\rm e}^{2D\bar \lambda}}{2D\bar \lambda }\int_0^1k(1,y)^2\mathrm dy,\\
\label{equ-lambda-inq2}
\nonumber&\int_0^1\int_0^1\gamma_x(x,y  )^2 \mathrm dy\mathrm dx\\
\leq& \frac{De^{2D\bar \lambda }}{\bar \lambda}\int_0^1 (k_{yy}(1,y)^2+\bar \lambda ^{2}k(1,y)^2)\mathrm dy.
\end{align}
\end{lemm}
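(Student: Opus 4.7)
The plan is to derive both estimates as immediate corollaries of pointwise-in-$x$ bounds that were already produced in the course of proving Lemma \ref{lem2}. In that proof (Appendix \ref{appendix-A}) we used the identity $\gamma(0,y)=k(1,y)$ together with integration by parts and the comparison lemma to obtain
\begin{align*}
\int_0^1 \gamma(x,y)^2\,\mathrm dy &\leq {\rm e}^{2D\bar\lambda x}\int_0^1 k(1,y)^2\,\mathrm dy,
\end{align*}
and, by the analogous computation starting from $\gamma_x(0,y)=D k_{yy}(1,y)+D\lambda(y)k(1,y)$ combined with $(a+b)^2\leq 2a^2+2b^2$,
\begin{align*}
\int_0^1 \gamma_x(x,y)^2\,\mathrm dy &\leq 2D^2{\rm e}^{2D\bar\lambda x}\int_0^1\!\bigl(k_{yy}(1,y)^2+\bar\lambda^2 k(1,y)^2\bigr)\,\mathrm dy.
\end{align*}

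The only remaining step is to integrate each of these inequalities with respect to $x\in[0,1]$. Since the right-hand sides depend on $x$ only through the factor ${\rm e}^{2D\bar\lambda x}$, this reduces to the elementary computation $\int_0^1 {\rm e}^{2D\bar\lambda x}\,\mathrm dx=\frac{{\rm e}^{2D\bar\lambda}-1}{2D\bar\lambda}$, after which I bound $({\rm e}^{2D\bar\lambda}-1)\leq {\rm e}^{2D\bar\lambda}$ to match the stated form. The first inequality \eqref{equ-lambda-inq1} follows directly. For the second, the prefactor $2D^2$ combines with $\frac{{\rm e}^{2D\bar\lambda}-1}{2D\bar\lambda}$ to give $\frac{D({\rm e}^{2D\bar\lambda}-1)}{\bar\lambda}\leq \frac{D{\rm e}^{2D\bar\lambda}}{\bar\lambda}$, which is precisely \eqref{equ-lambda-inq2}.

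Because all the hard work (integration by parts on the $\gamma$ PDE, use of the homogeneous Dirichlet conditions $\gamma(x,0)=\gamma(x,1)=0$, application of the comparison lemma) has already been discharged in the proof of Lemma \ref{lem2}, I do not anticipate a real obstacle here; the lemma is a packaging result that isolates the $L^2(\Omega_2)$ bounds on $\gamma$ and $\gamma_x$ in exactly the form that will be needed in the norm-equivalence estimates \eqref{equ-S-1-0}--\eqref{equ-S-2-0} of Proposition \ref{propo1}. The only small care is to remember that the right-hand side of \eqref{equ-lambda-inq2} is guaranteed to be finite because $k(1,\cdot)\in C^2([0,1])$ under the hypotheses of Lemma \ref{lem2}, so that $k_{yy}(1,\cdot)$ is a genuine bounded function (as quantified by \eqref{equ-est-kyy} in Lemma \ref{lemma-k}).
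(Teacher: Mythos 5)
Your proof is correct and takes essentially the same approach as the paper: both integrate the pointwise-in-$x$ estimates $\int_0^1 \gamma(x,y)^2\,\mathrm dy\leq {\rm e}^{2D\bar\lambda x}\int_0^1 k(1,y)^2\,\mathrm dy$ and $\int_0^1 \gamma_x(x,y)^2\,\mathrm dy\leq 2D^2{\rm e}^{2D\bar\lambda x}\int_0^1(k_{yy}(1,y)^2+\bar\lambda^2 k(1,y)^2)\,\mathrm dy$ (from the proof of Lemma \ref{lem2}) over $x\in[0,1]$ and then use $\int_0^1 {\rm e}^{2D\bar\lambda x}\,\mathrm dx\leq \frac{{\rm e}^{2D\bar\lambda}}{2D\bar\lambda}$.
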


\begin{proof} \rm
By integrating both sides of \eqref{equ-gamma-part1-0} 
with respect to $x$ from $0$ to $1$, 
we  get
\begin{align}
\nonumber&\int_0^1\int_0^1 \gamma(x,y)^2\mathrm dy\mathrm dx\\
\leq&\int_0^1\bigg(e^{2D\bar \lambda x}\int_0^1k(1,y)^2\mathrm dy\bigg)\mathrm dx\nonumber\\
\leq& \frac{{\rm e}^{2D\bar \lambda}}{2D\bar \lambda }\int_0^1k(1,y)^2\mathrm dy.
\end{align}
Meanwhile, following the proof of \eqref{equ-gammax-proof0}--\eqref{equ-gammax-proof1}, and take integrate \eqref{equ-gammax-proof1} with respect to $x$ from $0$ to $1$, we can obtain the estimate below
\begin{align}
\nonumber&\int_0^1\int_0^1 \gamma_{x}(x,y)^2\mathrm dy\mathrm dx\\
\nonumber\leq& \int_0^1\bigg(2 D^2 {\rm e}^{2D\bar \lambda x}\int_0^1 (k_{yy}(1,y)^2+\bar \lambda^2 k(1,y)^2)\mathrm dy\bigg)\mathrm dx\\
\leq &\frac{De^{2D\bar \lambda }}{\bar \lambda}\int_0^1 (k_{yy}(1,y)^2+\bar \lambda ^{2}k(1,y)^2)\mathrm dy,\label{equ-gamma-x-part2}
\end{align}
which completes the proof.
  \hfill$\blacksquare$
\end{proof}

\begin{lemm} \label{lemma4}
If the kernel $\gamma$ satisfies  the reaction-diffusion PDE \eqref{equ-gammax}--\eqref{equ-gamma0}
the following inequalities hold: 
\begin{align}
\label{equ-gamma-inq3}
\nonumber&\int_0^1 \gamma_y(x,1)^2\mathrm dx\\
\nonumber\leq&\int_0^1\bigg(\frac{{\rm e}^{2D\bar \lambda }}{\bar \lambda}k_{yy}(1,y)^2+\left(1+\bar \lambda +\frac{1}{D}\right)\frac{{\rm e}^{2D\bar \lambda }}{2D\bar \lambda}k_{y}(1,y)^{2}\\
&+\left(\frac{1}{2D}+\bar \lambda\right) {{\rm e}^{2D\bar \lambda }}k(1,y)^2\bigg)\mathrm dy,\\
\label{equ-gamma-inq4}
\nonumber &\int_0^1\gamma_{xy}(x,1)^2\mathrm dx\\
\nonumber=&3D^{2}\int_0^1 (k_{yyy}(1,y)^{2}+\bar \lambda^2k_{y}(1,y)^2+\bar \lambda'^2k(1,y)^2)\mathrm dy\\
\nonumber&+\bigg(\left(2D+D\bar \lambda +1+2D^{2}\left(\frac{\bar \lambda'}{\bar \lambda}+2\bar \lambda+\bar \lambda'\right)\right){{\rm e}^{2D\bar \lambda }}+2D^{2}\bar \lambda\\
&+2D^{2}\bar \lambda'+D+D\bar \lambda +1\bigg)\int_0^1 \left(k_{yy}(1,y)^2+\bar \lambda^2k(1,y)\right)\mathrm dy.
\end{align}
\end{lemm}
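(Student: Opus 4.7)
The plan is to use the boundary multiplier estimate \eqref{equ-etay00} from the proof of Lemma \ref{lem2} (the ``$2y\gamma_y$'' trick) as the engine, applying it directly to $\gamma$ for \eqref{equ-gamma-inq3} and to the auxiliary function $\zeta := \gamma_x$ for \eqref{equ-gamma-inq4}.

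For \eqref{equ-gamma-inq3}, I would integrate \eqref{equ-etay00} in $x$ over $[0,1]$. Two of the resulting double integrals, $\int_0^1\!\int_0^1 \gamma^2\,dy\,dx$ and $\int_0^1\!\int_0^1 \gamma_x^2\,dy\,dx$, are supplied by Lemma \ref{lemma3}, while the remaining one $\int_0^1\!\int_0^1 \gamma_y^2\,dy\,dx$ is obtained by integrating \eqref{equ-gamma-part2} in $x$, producing the factor $(e^{2D\bar\lambda}-1)/(2D\bar\lambda) \leq e^{2D\bar\lambda}/(2D\bar\lambda)$ times $\int_0^1 k_y(1,y)^2\,dy$. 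Collecting these three contributions with their respective weights $\bar\lambda$, $1+\bar\lambda+1/D$, and $1/D$ reproduces exactly the right-hand side of \eqref{equ-gamma-inq3}.

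For \eqref{equ-gamma-inq4}, the crucial observation is that $\zeta := \gamma_x$ satisfies the same reaction-diffusion PDE $\zeta_x = D\zeta_{yy} + D\lambda\zeta$ with the same homogeneous Dirichlet boundary data $\zeta(x,0) = \zeta(x,1) = 0$ (inherited by differentiating $\gamma(x,0) = \gamma(x,1) = 0$ in $x$), and with initial data $\zeta(0,y) = D(k_{yy}(1,y) + \lambda(y)k(1,y))$ and $\zeta_y(0,y) = D(k_{yyy}(1,y) + \lambda'(y)k(1,y) + \lambda(y)k_y(1,y))$ obtained by evaluating \eqref{equ-gammax} and its $y$-derivative at $x=0$. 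Replaying the $(2y\zeta_y)$-multiplier argument on $\zeta$ gives the pointwise bound
\[
\gamma_{xy}(x,1)^2 \leq \bar\lambda\,\|\gamma_x(x,\cdot)\|_{L^2}^2 + \bigl(1 + \bar\lambda + \tfrac{1}{D}\bigr)\|\gamma_{xy}(x,\cdot)\|_{L^2}^2 + \tfrac{1}{D}\|\gamma_{xx}(x,\cdot)\|_{L^2}^2.
\]
After integrating in $x$, one must bound three double integrals: the first, $\int\!\int \gamma_x^2$, is in Lemma \ref{lemma3}; the second, $\int\!\int \gamma_{xy}^2 = \int\!\int \zeta_y^2$, follows from the $\zeta$-analog of \eqref{equ-gamma-part2}, namely $\int_0^1 \zeta_y^2\,dy \leq e^{2D\bar\lambda x}\|\zeta_y(0,\cdot)\|_{L^2}^2$, where applying $(a+b+c)^2 \leq 3(a^2+b^2+c^2)$ to $\zeta_y(0,y)$ yields the leading $3D^2 \int (k_{yyy}^2 + \bar\lambda^2 k_y^2 + \bar\lambda'^2 k^2)\,dy$ visible in \eqref{equ-gamma-inq4}.

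The genuine obstacle is the third term $\int\!\int \gamma_{xx}^2 = \int\!\int \zeta_x^2$, because propagating $\|\zeta_x(x,\cdot)\|_{L^2}^2$ by a direct Gr\"onwall would pull $\zeta_{yy}(0,y) = D(k_{yyyy}(1,y)+\cdots)$ into the bound, and no control over $k_{yyyy}$ is available under the Lipschitz assumption on $\lambda''$ in Theorem \ref{thm-karniadakis-bkst}. To sidestep this, I would use the pointwise PDE identity $\zeta_x^2 \leq 2D^2 \zeta_{yy}^2 + 2D^2 \bar\lambda^2 \zeta^2$ and then control $\int\!\int \zeta_{yy}^2$ by extracting it from the dissipation term $-2D\int_0^1 \zeta_{yy}^2\,dy$ that emerges when computing $\tfrac{d}{dx}\tfrac{1}{2}\int_0^1 \zeta_y^2\,dy$ and integrating by parts; the boundary contributions $[\zeta_y\zeta_{yy}]_0^1$ vanish because evaluating the PDE under homogeneous Dirichlet data forces $\zeta_{yy}(x,0) = \zeta_{yy}(x,1) = 0$. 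Integrating this dissipation inequality over $x \in [0,1]$ and reinjecting the already-established bounds on $\int\!\int \zeta^2$ and $\int\!\int \zeta_y^2$ produces the characteristic $\bar\lambda'/\bar\lambda$ coefficient appearing in \eqref{equ-gamma-inq4}; the remaining constant bookkeeping is routine.
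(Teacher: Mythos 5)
Your proposal is correct and follows essentially the same route as the paper: the $2y\gamma_y$-multiplier boundary identity (the paper's \eqref{equ-etay00} and its $x$-differentiated analogue \eqref{equ-gamma-xy0}), integration in $x$, Lemma \ref{lemma3} and \eqref{equ-gammay2} for the double integrals in \eqref{equ-gamma-inq3}, and, for \eqref{equ-gamma-inq4}, the dissipation inequalities at the $\gamma_x$ and $\gamma_{xy}$ levels together with the initial data $\gamma_{xy}(0,y)=D(k_{yyy}(1,y)+\lambda k_y(1,y)+\lambda' k(1,y))$ to bring in $k_{yyy}$, and finally the PDE identity $\gamma_{xx}=D(\gamma_{xyy}+\lambda\gamma_x)$. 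The only substantive deviation is cosmetic: for $\int\!\int\gamma_{xy}^2$ you route through a Gr\"onwall bound on $\|\zeta_y(x,\cdot)\|^2$ with $\zeta=\gamma_x$ (which imports $k_{yyy}$ already at that step), whereas the paper instead integrates the dissipation of $\tfrac{d}{dx}\tfrac12\int\gamma_x^2\,dy$ (yielding \eqref{equ-gamma-xy}, which needs only $k_{yy}$ at that stage); both yield valid bounds, with $k_{yyy}$ entering the paper's final estimate solely through the $\int\!\int\gamma_{xyy}^2$ term. Your observation that the boundary terms $[\zeta_y\zeta_{yy}]_0^1$ vanish because $\zeta_{yy}(x,0)=\zeta_{yy}(x,1)=0$ is the same mechanism the paper uses (there phrased as $\gamma_{xx}(x,0)=\gamma_{xx}(x,1)=0$).
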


\begin{proof}\rm
 To prove \eqref{equ-gamma-inq3} and \eqref{equ-gamma-inq4}, the first step is building the relationship between $\gamma_{y}(x,1),\ \gamma_{xy}(x,1)$ and $k(1,y)$, and the second step is to prove the boundedness of $k_{yy}(1,y)$ and $k_{yyy}(1,y)$. We recall that we have proved the boundness of $k_y(1,y)$ and $k(1,y )$ in Section \ref{sec3}. 

To complete the  first step,  following the method in \eqref{equ-eta-2y}--\eqref{equ-etay00}, multiply the PDE $\gamma_{xx}(x,y)=D\gamma_{xyy}(x,y  )+D\lambda(y)\gamma_x(x,y)$ by $2y\gamma_{xy}(x,y  )$, which results into the following relation
\begin{align}
\nonumber&2y\gamma_{xx}(x,y)\gamma_{xy}(x,y)\\
=&2D y\gamma_{xy}(x,y)\gamma_{xyy}(x,y)+2D\lambda(y)y\gamma_x(x,y)\gamma_{xy}(x,y).\label{equ-gamma-2y}
\end{align}
Integrating \eqref{equ-gamma-2y} in $y$ and using integration by parts, one gets
\begin{align}
\nonumber&2\int_0^1y\gamma_{xx}(x,y)\gamma_{xy}(x,y)\mathrm dy\\
\nonumber=&D \gamma_{xy}(x,1)^2-D \int_0^1\gamma_{xy}(x,y)^2\mathrm dy\\
&+2D\int_0^1\lambda(y)y\gamma_x(x,y)\gamma_{xy}(x,y)\mathrm dy.
\end{align}
With the help of  Young's inequality, the following holds
\begin{align}
\nonumber \gamma_{xy}(x,1)^2=&\frac{1}{D}\int_0^1\gamma_{xx}(x,y)^2\mathrm dy+\bigg(1+\bar \lambda +\frac{1}{D}\bigg)\int_0^1\gamma_{xy}(x,y)^2\mathrm dy\\
&+\bar \lambda \int_0^1\gamma_x(x,y)^2\mathrm dy.\label{equ-gamma-xy0}
\end{align}
Integrating \eqref{equ-etay00} and \eqref{equ-gamma-xy0} in $x$, one arrives at
\begin{align}
\nonumber&\int_0^1 \gamma_y(x,1)^2\mathrm dx\\
\nonumber=&\bar \lambda \int_0^1\int_0^1\gamma(x,y)^2\mathrm dy\mathrm dx+\frac{1}{D}\int_0^1\int_0^1\gamma_x(x,y)^2\mathrm dy\mathrm dx\\
&+\left(1+\bar \lambda +\frac{1}{D}\right) \int_0^1\int_0^1\gamma_y(x,y)^2\mathrm dy\mathrm dx,\\
\nonumber &\int_0^1\gamma_{xy}(x,1)^2\mathrm dx\\
\nonumber=&\frac{1}{D}\int_0^1\int_0^1\gamma_{xx}(x,y)^2\mathrm dy\mathrm dx+\bar \lambda \int_0^1\int_0^1\gamma_x(x,y)^2\mathrm dy\mathrm dx\\
&+\left(1+\bar \lambda +\frac{1}{D}\right)\int_0^1\int_0^1\gamma_{xy}(x,y)^2\mathrm dy\mathrm dx.
\end{align}
Knowing that
\begin{align}
\nonumber&\frac{\mathrm d}{\mathrm dx}\frac{1}{2}\int_0^1 \gamma_{x}(x,y  )^{2}\mathrm dy\\
\nonumber=&\int_0^1 \gamma_{x}(x,y)\gamma_{xx}(x,y)\mathrm dy\\
\nonumber=&D \int_0^1 \gamma_{x}(x,y)(\gamma_{xyy}(x,y)+\lambda(y)\gamma_x(x,y))\mathrm dy\\
=&-D  \int_0^1 \gamma_{xy}(x,y)^2\mathrm dy+D \int_0^1 \lambda(y)\gamma_{x}(x,y)^{2}\mathrm dy,\label{equ-gammaxy}
\end{align}
where integration by parts is used, and then, integrating \eqref{equ-gammaxy} in $x$,   the following inequality is deduced
\begin{align}
\nonumber&\int_0^1\int_0^1 \gamma_{xy}(x,y)^2\mathrm dy\mathrm dx\\
\leq& \frac{1}{2D}\int_0^1 \gamma_{x}(0,y  )^2\mathrm dy+\bar \lambda\int_0^1\int_0^1 \gamma_{x}(x,y)^2\mathrm dy\mathrm dx.
\end{align}
From \eqref{equ-gammax} and \eqref{equ-gamma0}, it is obvious that $\gamma_x(0,y)=D\gamma_{yy}(0,y)+D\lambda(y)\gamma(0,y)$ and $\gamma_{yy}(0,y)=k_{yy}(1,y)$, therefore
\begin{align}
\nonumber&\int_0^1\int_0^1 \gamma_{xy}(x,y)^2\mathrm dy\mathrm dx\\
\nonumber\leq& D\int_0^1 (k_{yy}(1,y)^2+\bar \lambda^2k(1,y))\mathrm dy+\bar \lambda\int_0^1\int_0^1 \gamma_{x}(x,y)^2\mathrm dy\mathrm dx\\
\leq& D(1+e^{2D\bar \lambda})\int_0^1 (k_{yy}(1,y)^2+\bar \lambda^2k(1,y))\mathrm dy.\label{equ-gamma-xy}
\end{align}
Let us integrate by parts  the equality below
\begin{align}
\nonumber&\frac{\mathrm d}{\mathrm dx}\frac{1}{2}\int_0^1 \gamma_{xy}(x,y  )^{2}\mathrm dy\\
\nonumber=&\int_0^1 \gamma_{xy}(x,y)\gamma_{xxy}(x,y)\mathrm dy\\
\nonumber=&-D \int_0^1 \gamma_{xyy}(x,y)(\gamma_{xyy}(x,y)+\lambda(y)\gamma_x(x,y))\mathrm dy\\
\nonumber=&-D  \int_0^1 \gamma_{xyy}(x,y)^2\mathrm dy+D \int_0^1 \lambda(y)\gamma_{xy}(x,y)^{2}\mathrm dy\\
&+D \int_0^1 \lambda'(y)\gamma_{x}(x,y)\gamma_{xy}(x,y)\mathrm dy.\label{equ-gammaxy-ineq}
\end{align}
Hence,  taking the integral of \eqref{equ-gammaxy-ineq} with respect to $x$, the following holds  
\begin{align}
\nonumber&\int_0^1\int_0^1 \gamma_{xyy}(x,y)^2\mathrm dy\mathrm dx\\
\nonumber\leq& \frac{1}{2D}\int_0^1 \gamma_{xy}(0,y  )^2\mathrm dy+(\bar \lambda+\bar \lambda')\int_0^1\int_0^1 \gamma_{xy}(x,y)^2\mathrm dy\mathrm dx\\
\nonumber&+\bar \lambda'\int_0^1\int_0^1 \gamma_{x}(x,y)^2\mathrm dy\mathrm dx,\\
\nonumber\leq &\frac{3D}{2}\int_0^1 (k_{yyy}(1,y)^{2}+\bar \lambda^2k_{y}(1,y)^2+\bar \lambda'^2k(1,y)^2)\mathrm dy\\
\nonumber&+\bigg(\bar \lambda'\frac{De^{2D\bar \lambda }}{\bar \lambda}+D(\bar \lambda+\bar \lambda')(1+e^{2D\bar \lambda})\bigg)\int_0^1 (k_{yy}(1,y)^2\\
&+\bar \lambda^2k(1,y))\mathrm dy.
\end{align}
Based on \eqref{equ-gammax} and \eqref{equ-gamma0}, we get that 
\begin{align}
\gamma_{xy}(0,y)=&D\gamma_{yyy}(0,y)+D\lambda(y)\gamma_{y}(0,y)+D\lambda'(y)\gamma(0,y),
\end{align}
and 
\begin{align}
\gamma_{yyy}(0,y)=&k_{yyy}(1,y).
\end{align}
Since 
\begin{align}
\gamma_{xx}(x,y)=D\gamma_{xyy}(x,y)+D\lambda(y)\gamma_x(x,y),
\end{align}
it follows that
\begin{align}
\nonumber&\int_0^1\int_0^1\gamma_{xx}(x,y)^2\mathrm dy\mathrm dx\\
\nonumber\leq& 2D^2\int_0^1\int_0^1(\gamma_{xyy}(x,y)^{2}+\bar \lambda^2\gamma_x(x,y)^{2})\mathrm dy\mathrm dx\\
\nonumber\leq&3D^{3}\int_0^1 (k_{yyy}(1,y)^{2}+\bar \lambda^2k_{y}(1,y)^2+\bar \lambda'^2k(1,y)^2)\mathrm dy\\
\nonumber&+2D^{3}\bigg(\frac{\bar \lambda'}{\bar \lambda}+2\bar \lambda+\bar \lambda'){{\rm e}^{2D\bar \lambda }}+\bar \lambda+\bar \lambda'\bigg)\int_0^1 (k_{yy}(1,y)^2\\
&+\bar \lambda^2k(1,y)^2)\mathrm dy,\label{equ-gamma-xx}
\end{align}
and recalling \eqref{equ-gamma-x-part2} leads to the estimate
\begin{align}
\int_0^1\int_0^1 \gamma_{y}(x,y  )^2\mathrm dy\mathrm dx\leq \frac{{\rm e}^{2D\bar \lambda }}{2D\bar \lambda}\int_0^1k_{y}(1,y)^2\mathrm dy.\label{equ-gammay2}
\end{align}
Combining \eqref{equ-gamma-xy}, \eqref{equ-gamma-xx},  \eqref{equ-gammay2}, with Lemma \ref{lemma3}, inequalities  \eqref{equ-gamma-inq3} and \eqref{equ-gamma-inq4} are  derived.
 { Based on Lemma \ref{lemma-k},}  
 the boundedness of $k_{yyy}(x,y)$ is established, which completes the proof of  Lemma \ref{lemma4}.
  \hfill$\blacksquare$
\end{proof}

\bigskip
\noindent{\bf Proof of Proposition \ref{propo1}.} We derive the following estimate of   the $L^2$ norm of $\hat w$:
\begin{align}
\nonumber&\int_0^1\hat w(x,t)^2\mathrm dx\\
\nonumber=&\int_0^1\bigg(u(x,t)-\int_0^x\hat k(x,y)u(y,t)\mathrm dy\bigg)^2\mathrm dx\\
\nonumber\leq &2\int_0^1 u(x,t)^2\mathrm dx+2\int_0^1\left(\int_0^x\hat k(x,y)u(y,t)\mathrm dy\right)^2\mathrm dx\\
\nonumber\leq &2\rVert u(t)\rVert^2+2\int_0^1\int_0^x\hat k(x,y)^2\mathrm dy\mathrm dx\rVert u(t)\rVert^2\\
\leq &2\left(1+\int_0^1\int_0^1\hat k(x,y)^2\mathrm dy\mathrm dx\right)\rVert u(t)\rVert^2,\label{norm_w}
\end{align}
where we use Cauchy-Schwarz inequality.  Then, the $L^2$ norm of $v$ satisfies
\begin{align}
\nonumber&\int_0^1\hat z(x,t)^2\mathrm dx\\
\nonumber=&\int_0^1\left(v(x,t)-\int_0^1\hat \gamma(x,y)u(y,t)\mathrm dy\right.\\
\nonumber&-\left.D \int_0^x\hat q(x-y  )v(y,t)\mathrm dy\right)^2\mathrm dx\\
\nonumber\leq&3\int_0^1v(x,t)^2\mathrm dx+3\int_0^1\left(\int_0^1\hat \gamma(x,y  )u(y,t)\mathrm dy\right)^2\mathrm dx\\
\nonumber&+3D^2\int_0^1\left(\int_0^x\hat q(x-y)v(y,t)\mathrm dy\right)^2\mathrm dx\\
\nonumber\leq& 3\int_0^1\int_0^1\hat \gamma(x,y)^{2}\mathrm dy\mathrm dx\rVert u(t)\rVert^2\\
&+3\left(1+D^2\int_0^1\int_0^x\hat q(x-y)^2\mathrm dy\mathrm dx\right)\rVert v(t)\rVert^2,
\end{align}
 where we use Cauchy-Schwarz inequality.

Taking  the first derivative of $v(x,t)$ with respect to $x$, we get the following estimates
\begin{align}
&\nonumber\int_0^1\hat z_x(x,t)^{2}\mathrm dx\\
\nonumber=&\int_0^1\bigg(v_{x}(x,t)-\int_0^1\hat \gamma_{x}(x,y)u(y,t)\mathrm dy-D \hat q(0)v(x,t)\\ 
\nonumber&-D \int_0^x\hat q'(x-y  )v(y,t)\mathrm dy\bigg)^2\mathrm dx\\
\nonumber\leq&4\int_0^1v_{x}(x,t)^{2}\mathrm dx+4\int_0^1\left(\int_0^1\hat \gamma_{x}(x,y  )u(y,t)\mathrm dy\right)^2\mathrm dx\\
\nonumber&+4D^2\int_0^1(\hat q(0)v(x,t))^2\mathrm dx\\
\nonumber&+4D^2\int_0^1\left(\int_0^x\hat q'(x-y)v(y,t)\mathrm dy\right)^2\mathrm dx\\
\nonumber\leq&4\int_0^1\int_0^1\hat \gamma_x(x,y)^{2}\mathrm dy\mathrm dx \rVert u(t)\rVert^2+4D^2\bigg(\hat q(0)^2\\
&+\int_0^1\int_0^x\hat q'(x-y )^{2}\mathrm dy\mathrm dx\bigg)\rVert v(t)\rVert^2+4\rVert v_{x}(t)\rVert^2,\label{138}
\end{align}
where we use Cauchy-Schwarz inequality.
Hence, using  \eqref{norm_w}--\eqref{138}, we derive  
\begin{align}
\nonumber S_1=&9+2\int_0^1\int_0^x\hat k(x,y)^2\mathrm dx\mathrm dy+3\int_0^1\int_0^1\hat \gamma(x,y)^2\mathrm dx\mathrm dy\\
\nonumber&+4\int_0^1\int_0^1\hat \gamma_x(x,y)^2\mathrm dx\mathrm dy+3D^{2}\int_0^1\int_0^x\hat q(x-y)^2\mathrm dx\mathrm dy\\
&+4D^{2}\hat q(0)^2+4D^{2}\int_0^1\int_0^x\hat q'(x-y)^2\mathrm dx\mathrm dy.\label{139}
\end{align}
Since $\hat \chi=\chi-\tilde \chi$, $S_1$ in \eqref{139} satisfies the following inequalities given  with respect to the error variables
\begin{align}
\nonumber S_1=&9+2\int_0^1\int_0^x(k(x,y)-\tilde k(x,y))^2\mathrm dx\mathrm dy+3\int_0^1\int_0^1(\gamma(x,y)\\
\nonumber&-\tilde \gamma(x,y))^2\mathrm dx\mathrm dy+4\int_0^1\int_0^1(\gamma_x(x,y)-\tilde \gamma_x(x,y))^2\mathrm dx\mathrm dy\\
\nonumber&+3D^{2}\int_0^1\int_0^x(q(x-y)-\tilde q(x-y))^2\mathrm dx\mathrm dy+4D^{2}(q(0)\\
\nonumber&-\tilde q(0))^2+4D^{2}\int_0^1\int_0^x(q'(x-y)-\tilde q'(x-y))^2\mathrm dx\mathrm dy\\
\nonumber \leq&9+4\int_0^1\int_0^x(k(x,y)^{2}+\epsilon^2)\mathrm dx\mathrm dy+6\int_0^1\int_0^1(\gamma(x,y)^2\\
\nonumber&+\epsilon^2)\mathrm dx\mathrm dy+8\int_0^1\int_0^1(\gamma_x(x,y)^{2}+\epsilon^2)\mathrm dx\mathrm dy\\
\nonumber&+6D^{2}\int_0^1\int_0^x(q(x-y)^{2}+\epsilon^2)\mathrm dx\mathrm dy+8D^{2}(q(0)^{2}\\
\nonumber&+\epsilon^2)+8D^{2}\int_0^1\int_0^x(q'(x-y)^{2}+\epsilon^2)\mathrm dx\mathrm dy\\
\nonumber \leq&9+(18+22D^{2})\epsilon^2+4\int_0^1\int_0^xk(x,y)^{2}\mathrm dx\mathrm dy\\
\nonumber&+6\int_0^1\int_0^1\gamma(x,y)^2\mathrm dx\mathrm dy+8\int_0^1\int_0^1\gamma_x(x,y)^{2}\mathrm dx\mathrm dy\\
\nonumber&+6D^{2}\int_0^1\int_0^xq(x-y)^{2}\mathrm dx\mathrm dy+8D^{2}q(0)^{2}\\
&+8D^{2}\int_0^1\int_0^xq'(x-y)^{2}\mathrm dx\mathrm dy.
\end{align}
Knowing that $q(x)=\gamma_y(x,1)$
\begin{align}
&\nonumber\int_0^1\int_0^x\gamma_y(x-y,1  )^2\mathrm dy\mathrm dx=\int_0^1\int_0^x\gamma_y(s,1  )^{2}\mathrm ds\mathrm dx\\
\nonumber\leq&\int_0^1\int_0^1\gamma_y(s,1  )^{2}\mathrm ds\mathrm dx\\
=&\int_0^1\gamma_y(x,1  )^{2}\mathrm dx, 
\end{align}
and
\begin{align}
\nonumber&\int_0^1\int_0^x\gamma_{xy}(x-y,1  )^2\mathrm dy\mathrm dx=\int_0^1\int_0^x\gamma_{xy}(s,1  )^2\mathrm ds\mathrm dx\\
\nonumber\leq&\int_0^1\int_0^1\gamma_{xy}(s,1  )^2\mathrm ds\mathrm dx\nonumber\\
=&\int_0^1\gamma_{xy}(x,1  )^2\mathrm dx, 
\end{align}
we have
\begin{align}
\nonumber S_1\leq&9+(18+22D^{2})\epsilon^2+4\int_0^1\int_0^xk(x,y)^{2}\mathrm dx\mathrm dy\\
\nonumber&+6\int_0^1\int_0^1\gamma(x,y)^2\mathrm dx\mathrm dy+8\int_0^1\int_0^1\gamma_x(x,y)^{2}\mathrm dx\mathrm dy\\
\nonumber&+6D^{2}\int_0^1\gamma_y(x,1  )^{2}\mathrm dx+8D^{2}k_y(1,1  )^{2}\\
&+8D^{2}\int_0^1\gamma_{xy}(x,1)^{2}\mathrm dx.\label{equ-S-9}
\end{align}

%
Combining \eqref{equ-S-9}, Lemma \ref{lemma3}, and Lemma \ref{lemma4}, we obtain \eqref{equ-S-1-0}.

To prove \eqref{equ-S-2-0} in Proposition \ref{propo1}, we derive the following estimate of   the $L^2$ norm of $u$:
\begin{align}
\nonumber&\int_0^1u(x,t)^2\mathrm dx\\
\nonumber=&\int_0^1\bigg(\hat w(x,t)+\int_0^x\hat l(x,y)\hat  w(y,t)\mathrm dy\bigg)^2\mathrm dx\\
\nonumber\leq&2\int_0^1 \hat w(x,t)^2\mathrm dx+2\int_0^1\left(\int_0^x\hat l(x,y)\hat  w(y,t)\mathrm dy\right)^2\mathrm dx\\
\nonumber\leq&2\rVert\hat  w(t)\rVert^2+2\int_0^1\int_0^x\hat l(x,y)^2\mathrm dy\mathrm dx\rVert\hat  w(t)\rVert^2\\
\leq&2\left(1+\int_0^1\int_0^1\hat l(x,y)^2\mathrm dy\mathrm dx\right)\rVert\hat w(t)\rVert^2,\label{-equ-pro-u}
\end{align}
where  Cauchy-Schwarz inequality has been employed.  Then, the $L^2$ norm of $v$ is
\begin{align}
\nonumber&\int_0^1v(x,t)^2\mathrm dx\\
\nonumber=&\int_0^1\left(\hat z(x,t)+\int_0^1\hat \eta(x,y  )\hat  w(y,t)\mathrm dy\right.\\
\nonumber&+\left.D \int_0^x\hat p(x-y  )\hat z(y,t)\mathrm dy\right)^2\mathrm dx\\
\nonumber\leq&3\int_0^1\hat z(x,t)^2\mathrm dx+3\int_0^1\left(\int_0^1\hat \eta(x,y  )\hat  w(y,t)\mathrm dy\right)^2\mathrm dx\\
\nonumber&+3D^2\int_0^1\left(\int_0^x\hat p(x-y)\hat z(y,t)\mathrm dy\right)^2\mathrm dx\\
\nonumber\leq &3\int_0^1\int_0^1\hat \eta(x,y  )^{2}\mathrm dy\mathrm dx\rVert\hat w(t)\rVert^2\\
&+3\left(1+D^2\int_0^1\int_0^x\hat p(x-y)^2\mathrm dy\mathrm dx\right)\rVert\hat  z(t)\rVert^2,\label{-equ-pro-v}
\end{align}
where we use Cauchy-Schwarz inequality.

Next,  considering  the first derivative of $v(x,t)$ with respect to $x$, and exploiting   Cauchy-Schwarz inequality the following estimates are constructed
\begin{align}
&\nonumber\int_0^1v_x(x,t)^{2}\mathrm dx\\
\nonumber=&\int_0^1\bigg(\hat z_{x}(x,t)+\int_0^1\hat \eta_{x}(x,y  )\hat  w(y,t)\mathrm dy+D \hat p(0)\hat z(x,t)\\ 
\nonumber&+D \int_0^x\hat p'(x-y  )\hat z(y,t)\mathrm dy\bigg)^2\mathrm dx\\
\nonumber\leq&4\int_0^1\hat z_{x}(x,t)^{2}\mathrm dx+4\int_0^1\left(\int_0^1\hat \eta_{x}(x,y  )\hat  w(y,t)\mathrm dy\right)^2\mathrm dx\\
\nonumber&+4D^2\int_0^1(\hat p(0)\hat z(x,t))^2\mathrm dx\\
\nonumber&+4D^2\int_0^1\left(\int_0^x\hat p'(x-y  )\hat z(y,t)\mathrm dy\right)^2\mathrm dx\\
\nonumber\leq&4\int_0^1\int_0^1\hat \eta_x(x,y  )^{2}\mathrm dy\mathrm dx \rVert\hat   w(t)\rVert^2+4D^2\bigg(\hat p(0)^2\\
&+\int_0^1\int_0^x\hat p'(x-y  )^{2}\mathrm dy\mathrm dx\bigg)\rVert \hat z(t)\rVert^2+4\rVert\hat  z_{x}(t)\rVert^2.\label{-equ-pro-vx}
\end{align}
Then, from \eqref{-equ-pro-u}--\eqref{-equ-pro-vx}, we get 
\begin{align}
\nonumber S_2=&9+2\int_0^1\int_0^x\hat l(x,y)^2\mathrm dx\mathrm dy+3\int_0^1\int_0^1\hat \eta(x,y)^2\mathrm dx\mathrm dy\\
\nonumber&+4\int_0^1\int_0^1\hat \eta_x(x,y)^2\mathrm dx\mathrm dy+3D^{2}\int_0^1\int_0^x\hat p(x-y)^2\mathrm dx\mathrm dy\\
&+4D^{2}\hat p(0)^2+4D^{2}\int_0^1\int_0^x\hat p'(x-y)^2\mathrm dx\mathrm dy.\label{equ-pro-S2}
\end{align}

The next step is to prove the boundedness of $S_2$. First, we consider  the inverse gain kernels $\hat \eta(x,y)$, $\hat p(x)$ satisfy the following relations,
\begin{align}\label{equ-p-q}
\hat p(x)=&\hat q(x)+D\int_0^x\hat q(x-\xi)\hat p(\xi)\mathrm d\xi,\\
\nonumber\hat \eta(x,y)=&\hat \gamma(x,y)+D\int_y^1\gamma(x,\xi)\hat l(\xi,y)\mathrm d\xi\\
&+D\int_y^x\hat q(x-\xi)\hat \eta(\xi,y)\mathrm d\xi.
\end{align}
Knowing that the  inverse kernel satisfies the following conservative bound
\begin{align}
&\rVert \hat p\rVert_\infty\leq \rVert \hat q\rVert_\infty {\rm e}^{\rVert \hat q\rVert_\infty},\label{equ-pro-p}\\
&\rVert \hat \eta\rVert_\infty\leq (1+\rVert \hat l\rVert_\infty)\rVert \hat \gamma\rVert_\infty {\rm e}^{\rVert \hat q\rVert_\infty},\label{equ-pro-eta}
\end{align}
and  taking derivative of $\hat p$ and $\hat \eta$ with respect to $x$, the following holds
\begin{align}
\nonumber\hat p'(x)=&\hat q'(x)+D\hat q(0)\hat p(x)\\
&+D\int_y^x\hat q'(x-\xi)\hat p(\xi)\mathrm d\xi,\\
\nonumber\hat \eta_{x}(x,y)=&\hat \gamma_x(x,y)+D\int_y^1\gamma_{x}(x,\xi)\hat l(\xi,y)\mathrm d\xi+D\hat q(0)\hat \eta(x,y)\\
&+D\int_y^x\hat q'(x-\xi)\hat \eta(\xi,y)\mathrm d\xi.
\end{align}
Integrating \eqref{equ-pro-p} the following estimate can be derived
\begin{align}
\nonumber&\int_0^1\int_0^x\hat p'(x-y)^2\mathrm dy\mathrm dx\\
\nonumber=&\int_0^1\int_0^x\bigg(\hat q'(x-y)+D\hat q(0)\hat p(x-y)\\
\nonumber&+D\int_y^x\hat q'(x-\xi)\hat p(\xi-y)\mathrm d\xi\bigg)^{2}\mathrm dy\mathrm dx\\
\nonumber\leq&3\bigg(\int_0^1\int_0^x\hat q'(x-y)^{2}\mathrm dy\mathrm dx+D^{2}\rVert\hat q\rVert_\infty^2\rVert\hat p\rVert_\infty^2\\
&+D^2\rVert\hat p\rVert_\infty^2\int_0^1\int_0^x\hat q'(x-y)^{2}\mathrm dy\mathrm dx\bigg),\label{equ-pro-px}\\
\nonumber&\int_0^1\int_0^1\hat \eta_x(x,y)^2\mathrm dy\mathrm dx\\
\nonumber=&\int_0^1\int_0^1\bigg(\hat \gamma_x(x,y)+D\int_y^1\gamma_{x}(x,\xi)\hat l(\xi,y)\mathrm d\xi\\
\nonumber&+D\hat q(0)\hat \eta(x,y)+D\int_y^x\hat q'(x-\xi)\hat \eta(\xi,y)\mathrm d\xi\bigg)^{2}\mathrm dy\mathrm dx\\
\nonumber\leq&4(1+D^{2}\rVert\hat l\rVert_\infty^2)\int_0^1\int_0^1\hat \gamma_x(x,y)^{2}\mathrm dy\mathrm dx+4D^2\rVert\hat q\rVert_\infty^2\rVert\hat \eta\rVert_\infty^2\\
&+4D^2\rVert\hat \eta\rVert_\infty^2\int_0^1\int_0^x\hat q'(x-y)^{2}\mathrm dy\mathrm dx.\label{equ-pro-etax}
\end{align}
Finally, combining \eqref{equ-pro-S2} with \eqref{eq88}, \eqref{equ-pro-p}, \eqref{equ-pro-eta}, \eqref{equ-pro-etax},  Lemma \ref{lemma3} and Lemma \ref{lemma4}, one obtains that \eqref{equ-S-2-0} is bounded and completed the proof for the  Proposition \ref{propo1}.


\section*{Acknowledgments}

{The work of M. Diagne was funded by the NSF CAREER Award CMMI-2302030 and  the NSF grant CMMI-2222250. The work of M. Krstic was funded by the NSF grant ECCS-2151525 and the AFOSR grant FA9550-23-1-0535.}

\bibliographystyle{IEEEtranS}

\bibliography{reference}  
\end{document}